\newtheorem{theorem}{Theorem}[section]
\newtheorem{proposition}[theorem]{Proposition}
\newtheorem{lemma}[theorem]{Lemma}
\newtheorem{corollary}[theorem]{Corollary}
\theoremstyle{definition}
\newtheorem{definition}[theorem]{Definition}
\newtheorem{remark}[theorem]{Remark}
\newtheorem{example}[theorem]{Example}
\theoremstyle{remark}
\newcommand{\ds}{\displaystyle}
\newcommand{\zee}{\mathbb{Z}}
\newcommand{\arr}{\mathbb{R}}
\newcommand{\cue}{\mathbb{Q}}
\newcommand{\cee}{\mathbb{C}}
\newcommand{\K}{{\mathcal K}}
\newcommand{\T}{{\mathcal T}}
\newcommand{\C}{{\mathcal C}}
\newcommand{\ts}{\textstyle}
\newcommand{\cptwobar}{{\overline{\cee P}^2}}
\newcommand{\tDelta}{{\widetilde{\Delta}}}
\DeclareMathOperator{\tb}{tb}
\DeclareMathOperator{\rot}{rot}
\DeclareMathOperator{\Mod}{Mod}
\DeclareMathOperator{\id}{id}
\DeclareMathOperator{\Br}{Br}
\DeclareFontFamily{U}{mathx}{\hyphenchar\font45}
\DeclareFontShape{U}{mathx}{m}{n}{
      <5> <6> <7> <8> <9> <10>
      <10.95> <12> <14.4> <17.28> <20.74> <24.88>
      mathx10
      }{}
\DeclareSymbolFont{mathx}{U}{mathx}{m}{n}
\DeclareMathAccent{\widecheck}{0}{mathx}{"71}
\theoremstyle{plain}
\begin{document}

\title{Fillable contact structures from positive surgery}

\author{Thomas E. Mark}

\author{B\"{u}lent Tosun}

\address{Department of Mathematics \\ University of Virginia \\ Charlottesville \\ VA}

\email{tmark@virginia.edu}

\address{Department of Mathematics\\ University of Alabama\\Tuscaloosa\\AL}

\email{btosun@ua.edu}

\begin{abstract} We consider the question of when the operation of contact surgery with positive surgery coefficient, along a knot $K$ in a contact 3-manifold $Y$, gives rise to a weakly fillable contact structure. We show that this happens if and only if $Y$ itself is weakly fillable, and $K$ is isotopic to the boundary of a properly embedded symplectic disk inside a filling of $Y$. Moreover, if $Y'$ is a contact manifold arising from positive contact surgery along $K$, then any filling of $Y'$ is symplectomorphic to the complement of a suitable neighborhood of such a disk in a filling of $Y$.

Using this result we deduce several necessary conditions for a knot in the standard 3-sphere to admit a fillable positive surgery, such as quasipositivity and equality between the slice genus and the 4-dimensional clasp number, and we give a characterization of such knots in terms of a quasipositive braid expression.  We show that knots arising as the closure of a positive braid always admit a fillable positive surgery, as do knots that have lens space surgeries, and suitable satellites of such knots. In fact the majority of quasipositive knots with up to 10 crossings admit a fillable positive surgery. On the other hand, in general (strong) quasipositivity, positivity, or Lagrangian fillability need not imply a knot admits a fillable positive contact surgery.
\end{abstract}

\maketitle

\section{Introduction}
Every closed, oriented 3-dimensional manifold $Y$ admits contact structures, and any contact structure on $Y$ can be obtained from the standard contact structure on $S^3$ by contact surgery along a Legendrian link. Many geometric properties of contact structures are preserved by contact surgery with negative (contact) surgery coefficient, such as tightness and the various notions of fillability, so it is natural to consider the effect of positive contact surgery on these properties. Here we study the case of surgery along a single knot $K\subset Y$ and ask when the result of contact surgery, with some positive contact surgery coefficient, yields a fillable contact structure. Theorem \ref{mainthm} below provides, in a certain sense, a complete answer to this question.

 Recall that a compact symplectic 4-manifold $(Z,\omega)$ is a weak symplectic filling of $(Y,\xi)$ if $\partial Z = Y$ as oriented manifolds, and $\omega$ is positive on the contact planes $\xi$ (here, and throughout, we are supposing that $\xi$ has a given orientation). We say $(Z,\omega)$ is a strong filling of $(Y,\xi)$ if there is a Liouville vector field defined in a neighborhood of $Y$ and directed out of $Z$, whose induced contact structure is $\xi$.

 \begin{theorem}\label{mainthm} Let $\K\subset Y$ be an oriented Legendrian knot in a closed contact 3-manifold $(Y,\xi)$. For an integer $n$, let $\xi_n^-(\K)$ denote the contact structure obtained by contact $n$-surgery along $\K$ using ``all negative stabilizations.'' Then there exists $n>0$ such that $\xi_n^-(\K)$ is weakly symplectically filled by some symplectic manifold $(Z', \omega')$, if and only if both of the following hold:
\begin{itemize}
\item $(Y, \xi)$ is weakly symplectically fillable.
\item Some weak filling $(Z,\omega)$ of $(Y,\xi)$ contains a properly embedded symplectic disk $\Delta$ such that $\partial \Delta$ is a positive transverse pushoff of $\K$.
\end{itemize}
Moreover, in this situation $Z$ and $\Delta$ can be chosen so that $(Z',\omega')$ is symplectomorphic to the complement of a suitable neighborhood of $\Delta$ in $Z$.
\end{theorem}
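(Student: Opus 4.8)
The plan is to prove both implications together with the ``moreover'' clause through a single geometric correspondence: a weak filling of $\xi_n^-(\K)$ is the same thing as a weak filling of $(Y,\xi)$ from which a tubular neighborhood of a suitable properly embedded symplectic disk has been removed. Thus the whole theorem reduces to understanding, symplectically and in both directions, the operation ``delete (respectively, glue back) a neighborhood of a symplectic disk with transverse boundary.''

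The starting point is a standard-neighborhood analysis. For a properly embedded symplectic disk $\Delta\subset(Z,\omega)$ with $\partial\Delta=T\subset Y$ a transverse knot, a relative symplectic neighborhood theorem identifies $\nu(\Delta)$ with a fixed local model $D^2\times D^2$, the symplectic geometry of which is pinned down by the self-intersection number $[\Delta]^2$ (taken relative to the Seifert framing of $T$). Deleting $\nu(\Delta)$ replaces the solid torus $\nu(T)\subset Y$ by the complementary solid torus of the model, so that $\partial(Z\setminus\nu(\Delta))$ is obtained from $Y$ by Dehn surgery along $T$; the elementary fact that attaching a $2$-handle and then removing a neighborhood of its cocore returns the original manifold lets me read off the coefficient. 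The two computations at the heart of this step are (i) that the self-linking number of $T$ together with $[\Delta]^2$ forces the surgery to be a \emph{positive} contact surgery with some integral coefficient $n>0$, and (ii) that the contact structure the symplectic model induces on the surgery solid torus is exactly the ``all negative stabilizations'' structure $\xi_n^-(\K)$ rather than one of the other contact $n$-surgeries.

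Granting this, the $(\Leftarrow)$ direction is immediate: given the disk $\Delta$ in a weak filling $(Z,\omega)$ of $(Y,\xi)$, set $Z'=Z\setminus\nu(\Delta)$ and $\omega'=\omega|_{Z'}$; the neighborhood analysis identifies $\partial Z'$ with the surgered manifold carrying $\xi_n^-(\K)$, and $\omega'$ restricts positively to its contact planes, so $(Z',\omega')$ is the required weak filling, realized as the disk complement. For the $(\Rightarrow)$ direction I run the construction backwards: starting from a weak filling $(Z',\omega')$ of $\xi_n^-(\K)$, I glue the standard symplectic model of $\nu(\Delta)$ onto $Z'$ along the surgery solid torus in $\partial Z'$. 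The result $(Z,\omega)$ is then a weak filling of its boundary, which the matching in step (ii) identifies with $(Y,\xi)$; in particular $(Y,\xi)$ is weakly fillable, the glued-in core $D^2\times\{0\}$ is a symplectic disk $\Delta$ whose boundary is the positive transverse pushoff of $\K$, and by construction $Z'=Z\setminus\nu(\Delta)$.

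The main obstacle is the gluing in the $(\Rightarrow)$ direction. Because we are in the weak (not strong) category, there is no canonical symplectization collar along which to glue, so I must first deform $\omega'$ in a collar of $\partial Z'$ to make it agree with the model's symplectic form along the solid torus where the attachment takes place; this is done by adjusting the form within a suitable cohomology class, in the spirit of Eliashberg's deformations of weak fillings near the boundary, and is exactly the point where one must verify that the relevant cohomological obstruction vanishes. It is here that the hypothesis of \emph{all negative stabilizations} is essential: it is precisely this contact $n$-surgery whose surgery solid torus matches the transverse boundary of the symplectic model, so that the gluing can be performed symplectically and the glued-in core is genuinely symplectic with transverse boundary of the correct self-linking number. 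Once the gluing is in place, I expect the remaining framing and self-linking bookkeeping — matching the positive integer $n$ with $[\Delta]^2$ and $\mathrm{sl}(T)$, and checking that the freedom in $n>0$ corresponds to the freedom in stabilizing — to be routine.
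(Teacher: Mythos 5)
Your high-level architecture matches the paper's: both directions are mediated by the correspondence between fillings of $\xi_n^-(\K)$ and complements of symplectic disk neighborhoods in fillings of $(Y,\xi)$, and the ``moreover'' clause falls out of the forward direction. But the two steps you flag as computations to be ``read off'' are in fact where all the work lives, and your plan for the harder of them does not close. For the $(\Rightarrow)$ direction you propose to glue the model $D^2\times D^2$ directly onto $(Z',\omega')$ after deforming $\omega'$ in a collar ``in the spirit of Eliashberg's deformations of weak fillings.'' This is a restatement of the difficulty, not a resolution: in the weak category there is no reason $\omega'$ can be made to agree with the model form along the attaching solid torus, and Eliashberg's deformation result requires exactness of $\omega'$ near the boundary, which is unavailable here. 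The paper avoids this entirely by arranging $\K$ parallel to a binding component of a supporting open book, so that the surgery is a ``capping off'' of a binding component of the open book for $\xi_n^-(\K)$, and then invoking Wendl's theorem (\cite[Theorem 5]{Wendl:nonexact}) that the capped-off cobordism carries a symplectic form weakly filling the capped-off contact structure and containing the cocore as a symplectic disk. That external input, or something equivalent, is indispensable and is missing from your outline. Also, your claim that ``all negative stabilizations'' is what makes the cohomological obstruction vanish conflates two separate issues: the stabilization convention determines \emph{which} tight structure on the reglued solid torus appears (a monodromy/framing statement), not whether the symplectic gluing exists.

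The $(\Leftarrow)$ direction has a quieter gap. The boundary of $Z\setminus\nu(\Delta)$ is not automatically a contact-type hypersurface in a weak filling, so the model does not ``induce'' a contact structure on it; one must \emph{construct} a contact form on the new boundary --- in the paper, by choosing a Liouville field in the split model transverse to a carefully shaped hypersurface $H$, pulling back its primitive near the new binding, and interpolating with the original contact form $\lambda_Y$ on $Y\setminus\nu(K)$ subject to a winding condition --- and then separately verify that $\omega$ dominates the resulting contact planes (Proposition \ref{weakfillprop}). Identifying the result as $\xi_n^-(\K)$ then requires the monodromy computation of Proposition \ref{monodromyprop} together with the braiding Lemma \ref{1braidlem}, and positivity of the contact surgery coefficient is not forced by self-linking bookkeeping alone: the paper arranges it by stabilizing the open book so the page framing drops below the disk framing (and, for the direct homological argument, needs the embedding-into-a-closed-manifold/McDuff argument of the remark following Lemma \ref{fillsurgcoefflem}). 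None of these steps is routine, and your proposal would need to supply each of them.
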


We will use the phrase ``$\K$ admits a fillable positive surgery'' as shorthand for the condition that some $n>0$ exists as in the theorem. In fact, if $K\subset Y$ is a smooth knot, we will say $K$ admits a fillable positive surgery if there exists some Legendrian representative $\K$ of $K$ with that property. Note that any smooth knot admits many inequivalent Legendrian representatives, but the condition for fillable surgery depends only on the transverse pushoff of such representatives. In particular, since oriented Legendrians $\K$ and $\K'$ differing by negative stabilization(s) have equivalent transverse pushoffs, the question of admitting fillable positive surgery has the same answer for $\K$ and $\K'$. As we will review below (Remark \ref{techremark}), the fillable contact structures resulting from such surgeries are contactomorphic so long as the underlying smooth surgeries are equivalent.

The forward implication of the ``if and only if'' statement of the theorem was proved by Conway, Etnyre, and the second author in \cite[Theorem 1.13]{CET} (that theorem is stated only for knots in the standard 3-sphere, but the same argument proves the more general case). We give a slightly different proof in Section \ref{proofsec} (see Theorem \ref{capthm}). The converse direction is a consequence of the following result.

\begin{theorem}\label{scoopthmintro} Let $(Z,\omega)$ be a weak symplectic filling of $(Y, \xi)$, and $\Delta\subset Z$ a properly embedded symplectic disk with positively transverse boundary $K\subset Y$. Then $\Delta$ has an arbitrarily small tubular neighborhood $U$ such that if $Z' = Z - U$ and $\omega' = \omega|_{Z'}$, then $(Z', \omega')$ is a weak symplectic filling of a contact structure $\xi'$ on $Y' = \partial Z'$. Moreover, the contact structure $\xi'$ is obtained from $\xi$ by an inadmissible transverse surgery along $K$, or (equivalently) from a positive contact surgery along a Legendrian approximation of $K$.
\end{theorem}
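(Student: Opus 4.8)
The plan is to reduce the statement to an explicit computation in a standard symplectic model of a neighborhood of $\Delta$, carry out the removal of $U$ in that model where it becomes a concrete solid-torus replacement (a Dehn surgery on $Y$ along $K$), and then verify directly that the product symplectic form dominates a rotationally invariant contact structure on the new boundary that one recognizes as inadmissible transverse surgery. First I would invoke a relative version of the symplectic neighborhood theorem: since $\Delta$ is a disk, its symplectic normal bundle is trivial, and combining this with the standard neighborhood theorem for the positively transverse knot $K=\partial\Delta\subset(Y,\xi)$ should identify a neighborhood of $\Delta$ in $(Z,\omega)$ with a bidisk $\{|z_1|\le 1,\ |z_2|\le\epsilon_0\}\subset(\mathbb{C}^2,\omega_0)$, where $\omega_0=\tfrac12\big(d(r_1^2)\wedge d\theta_1+d(r_2^2)\wedge d\theta_2\big)$, in such a way that $\Delta=\{z_2=0\}$, the boundary $Y$ corresponds to $\{r_1=1\}$ with contact form $\alpha=d\theta_1+r_2^2\,d\theta_2$, and $K$ is the transverse core $\{r_1=1,\ r_2=0\}$.

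In this model the scooping is transparent. Removing the open tube $U=\{r_1\le 1,\ r_2<\epsilon\}$ for $\epsilon<\epsilon_0$ deletes the standard neighborhood $\{r_1=1,\ r_2<\epsilon\}$ of the transverse knot $K$ from $Y$ and glues in the solid torus $F_\epsilon=\{r_1\le 1,\ r_2=\epsilon\}\cong D^2\times S^1$; this is precisely a Dehn surgery on $Y$ along $K$, and as $\epsilon\to 0$ the neighborhood $U$ shrinks to $\Delta$ while the surgery coefficient tends to $+\infty$, which is what yields the ``arbitrarily small $U$'' together with a large positive surgery slope. On the new face I would take a rotationally invariant contact form $\beta=f(r_1)\,d\theta_1+g(r_1)\,d\theta_2$; the contact condition is $fg'-f'g\neq 0$, smoothness at the core $\{r_1=0\}$ forces $f(0)=0$ and $g(0)\neq 0$, and matching $\xi$ along the gluing locus $\{r_1=1\}$ requires $(f(1):g(1))=(1:\epsilon^2)$. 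The decisive point is the domination computation: writing $\xi'=\ker\beta=\langle \partial_{r_1},\ g\,\partial_{\theta_1}-f\,\partial_{\theta_2}\rangle$ on $F_\epsilon$, one finds $\omega_0(\partial_{r_1},\,g\,\partial_{\theta_1}-f\,\partial_{\theta_2})=g\,r_1$, so $\omega_0$ is positive on $\xi'$ exactly when $g>0$ (the apparent degeneracy at $r_1=0$ is an artifact of polar coordinates: at the core $\xi'$ is the $z_1$-plane, on which $\omega_0=dx_1\wedge dy_1>0$ provided $g(0)>0$). Choosing the path $r_1\mapsto(f(r_1),g(r_1))$ to keep $g>0$, satisfy the contact condition, and connect $(0,g(0))$ to a positive multiple of $(1,\epsilon^2)$ while turning in the prescribed direction produces a weak filling and realizes $\xi'$ as an inadmissible transverse surgery along $K$; the passage to positive contact surgery along a Legendrian approximation is then the standard dictionary between transverse and contact surgery, as used in \cite{CET}.

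The hard part will be the gluing and corner analysis rather than the interior model. The naive boundary $\partial Z'$ has a codimension-one corner along $\{r_1=1,\ r_2=\epsilon\}$ where the retained part of $Y$ meets the new face $F_\epsilon$, and I must round this corner symplectically while preserving the weak-domination inequality throughout the rounding and while interpolating between the given contact form $\alpha$ on $Y\setminus U$ and the chosen form $\beta$ on $F_\epsilon$. I expect to handle this by writing both faces as graphs over a common rounded hypersurface and choosing an intermediate $1$-form by a convex interpolation whose kernel stays $\omega_0$-positive, exploiting that the domination condition is open and convex in the defining form; care is needed to check that the interpolation does not destroy the contact condition and that the resulting smooth contact structure still agrees with $\xi$ away from $U$. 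A secondary technical point is justifying the relative neighborhood identification in the first step precisely enough that the transverse framing of $K$, and hence the resulting surgery coefficient, is pinned down; once the model and the corner rounding are in place, the identification of $\xi'$ with the advertised transverse and contact surgeries follows from matching the slope data of the glued-in solid torus.
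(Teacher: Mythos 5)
Your overall architecture --- put a standard product model around $\Delta$, replace the deleted tube by a solid torus carrying a rotationally invariant form $f(r_1)\,d\theta_1+g(r_1)\,d\theta_2$, check the contact (winding) condition and the positivity of $\omega_0$ on the kernel, then interpolate out to the given structure on $Y$ --- is genuinely the same strategy the paper uses, and your domination computation $\omega_0(\partial_{r_1},\,g\,\partial_{\theta_1}-f\,\partial_{\theta_2})=g\,r_1$ is the same calculation that appears in Proposition \ref{weakfillprop}. One structural difference: the paper avoids your corner-rounding problem entirely by building a single smooth hypersurface $H$, the image of $(\sigma,\alpha,\beta)\mapsto(r_1(\sigma),\alpha,r_2(\sigma),\beta)$, which coincides with a torus $\{r_2=\mathrm{const}\}$ near the core and with an annular portion of $Y$ itself near the outside; the interpolation of the $1$-form happens along this one hypersurface, so there is never a codimension-one corner to round. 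That is a cleaner route to the same end and you could adopt it.

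However, there are two genuine gaps. First, your opening normal form assumes too much: you put $\Delta$, the symplectic form, the contact form on $Y$ near $K$, \emph{and} the product framing into a single coordinate system in which $\xi=\ker(d\theta_1+r_2^2\,d\theta_2)$ with $\theta_2$ the normal angle of the disk. The symplectic neighborhood theorem for $\Delta$ and the contact neighborhood theorem for $K$ produce two \emph{a priori} incompatible coordinate systems; the contact model comes with its own framing, which differs from the $\Delta$-framing by an integer $n$, and in the disk coordinates the contact form acquires the cross term $(1+\epsilon n\sigma^2)\,d\alpha-\epsilon\sigma^2\,d\beta'$ rather than your $d\theta_1+r_2^2\,d\theta_2$. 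This is not the ``secondary technical point'' you defer: the sign of $n$ is exactly what makes the resulting surgery an \emph{inadmissible} transverse surgery (equivalently a \emph{positive} contact surgery), and the paper has to work to arrange $n>0$, by choosing an open book whose page framing of $K$ is smaller than the $\Delta$-framing (stabilizing via Lemma \ref{1braidlem} and the lemma preceding Proposition \ref{contsurgprop}). Second, the identification of $\xi'$ with the advertised surgery is asserted (``matching the slope data'') but not performed; in the paper this is the content of Proposition \ref{monodromyprop}, which computes the monodromy of the induced open book on $Y'$ as the old monodromy composed with $n$ right Dehn twists about $\partial D$ and then matches it against the Ding--Geiges--Stipsicz/Conway description. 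Finally, a smaller but real error: the smooth surgery producing $Y'$ does not depend on $\epsilon$ and does not tend to $+\infty$ as $U$ shrinks --- it is fixed, with coefficient $-\Delta\cdot\Delta$ relative to a Seifert framing (Lemma \ref{fillsurgcoefflem}); only the neighborhood, not the slope, is ``arbitrarily small.''
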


We remark that if $\K\subset Y$ has the property that $\xi^-_n(\K)$ is weakly fillable for some $n>0$, then $\xi^-_m(\K)$ is fillable for all $m\geq n$. There are various ways to see this; a proof from the current point of view is easily supplied by symplectically blowing up points of the symplectic disk $\Delta\subset Z$ provided by Theorem \ref{mainthm} (see the discussion in Section \ref{surgcoeffsec}). Thus, for given $\K$, the set of integers $n>0$ such that $\xi^-_n(\K)$ is weakly fillable is either empty or an interval of the form $[n_0, \infty)$.
 
When specialized to knots in the standard contact structure $\xi_{std}$ on $S^3$, Theorem \ref{mainthm} provides very effective means to determine whether a knot $K$ admits a fillable positive surgery. (We remark that in the case $(Y, \xi) = (S^3, \xi_{std})$, weak and strong fillability of $\xi_n^-(\K)$ are equivalent. See Corollary \ref{strongfillcor}.) Indeed, the symplectic fillings of $(S^3, \xi_{std})$ are all symplectomorphic to blowups of the 4-ball, so the question reduces to asking whether $K$ is the boundary of a symplectic disk inside such a blowup. 

On the side of constraints we have the following corollary, where we recall that the {\it 4-dimensional clasp number} $c_*(K)$ is the minimum number of double points in any generically immersed disk in $B^4$ with boundary $K$. The {\it slice genus} $g_*(K)$ is the minimum genus of any smoothly embedded surface in $B^4$  with boundary $K$; by smoothing double points we have the obvious inequality $g_*(K)\leq c_*(K)$ for any $K\subset S^3$. In general, the difference between $c_*(K)$ and $g_*(K)$ can be arbitrarily large \cite{daemiscaduto:clasp, JZ:clasp}. The definition of quasipositivity is reviewed in Section \ref{equivcondsec}.

\begin{corollary}\label{c*g*cor} If $K\subset (S^3, \xi_{std})$ admits a fillable positive surgery, then $K$ must satisfy:
\begin{itemize}
\item $K$ is quasipositive.
\item The invariants $g_*(K)$ and $c_*(K)$ are equal.
\end{itemize}
Moreover, $K$ bounds a properly immersed disk in $B^4$ with only transverse double point singularities, having $c_*(K) = g_*(K)$  positive double points, and no negative double points. 
\end{corollary}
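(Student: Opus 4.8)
The plan is to reduce the statement to the classification of symplectic fillings of the standard contact $3$-sphere. Recall from the discussion preceding the corollary that every weak symplectic filling of $(S^3,\xi_{std})$ is symplectomorphic to a blowup $(B^4,\omega_{std})\,\#\,k\,\cptwobar$ of the standard symplectic $4$-ball. So if $K$ admits a fillable positive surgery, then by Theorem~\ref{mainthm} its positive transverse pushoff $T$ (a transverse representative of the knot type $K$) bounds a properly embedded symplectic disk $\Delta$ in some such blowup $Z$. First I would choose a compatible almost complex structure $J$ for which $\Delta$ and each exceptional sphere $E_i$ are simultaneously $J$-holomorphic; then positivity of intersections gives $\Delta\cdot E_i = m_i\ge 0$, and after a generic perturbation $\Delta$ meets each $E_i$ transversally and positively in exactly $m_i$ points.

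Next I would blow down the exceptional spheres $E_1,\dots,E_k$. Blowing down $E_i$ collapses the $m_i$ transverse intersection points of $\Delta$ with $E_i$ to an ordinary $m_i$-fold point, which after a small perturbation becomes $\binom{m_i}{2}$ transverse double points; positivity of intersections of $J$-holomorphic branches guarantees that each such double point is positive. The result is an immersed symplectic disk $D\subset B^4$ with $\partial D=T$ whose only singularities are $d:=\sum_i\binom{m_i}{2}$ positive transverse double points, with no negative ones. This already produces the immersed disk asserted in the final sentence of the corollary, with $d$ positive double points; in particular $c_*(K)\le d$.

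To pin down $d$ I would pass to an embedded surface. Resolving each positive double point of $D$ in the orientation-compatible way can be carried out symplectically and yields a connected embedded symplectic surface $\Sigma'\subset (B^4,\omega_{std})$ with $\partial\Sigma'=T$; an Euler-characteristic count ($\chi(\Sigma')=1-2d$) shows $g(\Sigma')=d$, so $g_*(K)\le d$. Quasipositivity of $K$ then follows from the theorem of Boileau and Orevkov, by which a transverse knot bounding an embedded symplectic surface in the symplectic $4$-ball is quasipositive; this gives the first bullet. For the genus/clasp-number equality I would apply the relative adjunction (self-linking) formula to the symplectic surface $\Sigma'$, which gives $\mathrm{sl}(T)=-\chi(\Sigma')=2d-1$, together with the slice--Bennequin inequality $\mathrm{sl}(T)\le 2g_*(K)-1$. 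The latter forces $d\le g_*(K)$, whence $g_*(K)=d$, and the elementary chain $g_*(K)\le c_*(K)\le d=g_*(K)$ then collapses to $c_*(K)=g_*(K)=d$. This proves the second bullet and shows the immersed disk $D$ realizes exactly $c_*(K)=g_*(K)$ positive double points.

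The main obstacle is making the two numerical inputs agree. Producing $D$ and the inequalities $c_*(K),g_*(K)\le d$ is soft, but the reverse inequality $d\le g_*(K)$ rests on the exact self-linking computation for a symplectic surface combined with slice--Bennequin; the real content is that the symplectic disk supplied by Theorem~\ref{mainthm} is sharp for both invariants at once, so that smoothing its blown-down double points cannot be improved. I would take some care to confirm that the blowdown, the perturbations, and the double-point resolution can all be performed within the symplectic (respectively $J$-holomorphic) category, since it is precisely the positivity of every double point---and hence the absence of negative double points---that makes the adjunction equality, rather than merely an inequality, available.
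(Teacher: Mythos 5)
Your proposal is correct and follows essentially the same route as the paper's proof (given as Corollary \ref{c*g*refined} together with Theorem \ref{fillsurgS3thm}): make the disk and exceptional spheres simultaneously $J$-holomorphic, blow down, perturb multiple points to positive nodes, and smooth to an embedded symplectic surface whose genus is forced to equal $g_*(K)$. The only difference is that you unpack the final genus-minimization step via the relative adjunction equality plus slice--Bennequin, where the paper simply cites Boileau--Orevkov and Rudolph for the fact that a symplectic surface in $B^4$ realizes the slice genus.
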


The proof of Corollary \ref{c*g*cor} is spelled out in Section \ref{S3proofsec}. It immediately implies the following, which in particular answers Question 2 of \cite{CET}.

\begin{corollary}\label{counterexcor} There exist Legendrian knots $\K$ in $S^3$  such that $\xi_n^-(\K)$ is tight for all sufficiently large $n$, but $\xi^-_n(\K)$ is not fillable for any $n>0$. In fact, there exist such $\K$ among positive knots, in particular those that bound Lagrangian surfaces embedded in $B^4$.
\end{corollary}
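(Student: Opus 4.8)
The plan is to separate the two requirements and satisfy both with a single class of knots. Non-fillability is the immediate half: by Corollary \ref{c*g*cor}, if any positive surgery on a Legendrian $\K$ were fillable, then the underlying smooth knot $K$ would satisfy $g_*(K) = c_*(K)$. It therefore suffices to exhibit a \emph{positive} knot $K$ with $g_*(K) < c_*(K)$; for such a $K$, no Legendrian representative $\K$ can have $\xi_n^-(\K)$ fillable for any $n > 0$, which is the second clause of the statement.

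For tightness of $\xi_n^-(\K)$ at large $n$, I would pass to the maximal Thurston--Bennequin representative of such a $K$. Positivity forces $g_*(K) = g(K)$ together with sharpness of the slice--Bennequin inequality, so this representative attains the maximal self-linking number $\overline{sl}(K) = 2g_*(K) - 1$, and in particular $\tau(K) = g_*(K)$. For a knot meeting this sharp bound the large positive surgeries carry a nonvanishing Heegaard Floer contact invariant, and nonvanishing of the contact invariant forces tightness; this is the setting of \cite{CET}, whose Question 2 the corollary answers. Hence $\xi_n^-(\K)$ is tight for all sufficiently large $n$, the first clause.

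It remains to feed in positive knots with a genuine clasp gap. I would draw on the families of \cite{daemiscaduto:clasp, JZ:clasp}, in which the four-dimensional clasp number exceeds the slice genus by an arbitrarily large amount, and verify that suitable members are positive and, in addition, bound embedded Lagrangian surfaces in $B^4$. Note that these knots cannot be closures of positive braids, since the latter always admit a fillable positive surgery and hence have $g_* = c_*$; the point is precisely that the weaker hypotheses of positivity, or of Lagrangian fillability, still permit $g_*(K) < c_*(K)$. Running such knots through the preceding two paragraphs yields Legendrians that are tight for all large $n$ but admit no fillable positive surgery.

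The main obstacle is this last input. Positivity controls $g_*$ tightly from above (it equals the Seifert genus) and is simultaneously responsible for tightness, so the entire construction hinges on a four-dimensional lower bound for $c_*$ --- from instanton or knot Floer theory --- that provably exceeds $g_*$ for \emph{positive}, Lagrangian-fillable knots. Extracting such examples, rather than generic knots, from \cite{daemiscaduto:clasp, JZ:clasp} is the delicate part; once one is in hand, tightness follows from the sharp self-linking bound and non-fillability is immediate from Corollary \ref{c*g*cor}.
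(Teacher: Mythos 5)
Your overall skeleton matches the paper's: non-fillability comes from Corollary \ref{c*g*cor} once you have a positive knot with $g_*(K) < c_*(K)$, and tightness of $\xi_n^-(\K)$ for large $n$ comes from taking a maximal Thurston--Bennequin representative of a positive (hence strongly quasipositive) knot, where sharpness of the slice--Bennequin bound gives nonvanishing Heegaard Floer contact invariants via \cite{Golla, MT:surgery}. That part is sound. The Lagrangian fillability clause is also handled correctly in spirit: positive knots are Lagrangian fillable by \cite{HaySab}, which you should cite rather than leave as something to ``verify.''

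The genuine gap is exactly the step you flag as ``the main obstacle'': you never produce a positive knot with $g_*(K) < c_*(K)$, and the source you propose to mine is the wrong one. The families of \cite{daemiscaduto:clasp, JZ:clasp} are constructed to make $c_* - g_*$ arbitrarily large, but there is no reason to expect (and you do not check) that any of them are positive knots; positivity is a very restrictive diagrammatic condition, and the whole argument collapses without it, since positivity is what delivers both tightness and Lagrangian fillability. The paper needs no such family: it simply takes $K = 7_4$, a positive knot with $g_*(K) = 1$ for which Owens and Strle \cite{OwSt2016} computed $c_*(K) = 2$. A single explicit low-crossing example with a verified clasp-number computation is the entire content of the proof, and deferring it as a ``delicate part'' to be extracted later means the corollary is not actually proved. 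If you want to keep your framing, replace the appeal to \cite{daemiscaduto:clasp, JZ:clasp} with the $7_4$ example (or any positive knot from Table \ref{table:lowcross} marked $c_* > g_*$, such as $9_5$ or $9_{10}$, provided those are positive), and the argument closes.
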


Indeed, one need only consider the knot $K = 7_4$ in the standard tables. This knot has $g_*(K) = 1$, but it was shown by Owens-Strle \cite{OwSt2016} that $c_*(K) = 2$. Hence $K$ admits no fillable positive surgery by Corollary \ref{c*g*cor}. On the other hand, $K$ is a positive knot in the sense that it admits a diagram with only positive crossings, hence is Lagrangian fillable by \cite{HaySab}, and it is not hard to see that for $\K$ a Legendrian representative with maximal Thurston-Bennequin number, the contact surgeries $\xi^-_n(\K)$ are tight for all $n\geq 1$ (for example, it follows from \cite{Golla} or \cite{MT:surgery} that the Heegaard Floer contact invariants of these contact structures are all nonzero). Note that positive knots are in particular strongly quasipositive \cite{Rudolph99}, so Corollary \ref{counterexcor} also shows that strong quasipositivity does not imply a fillable positive surgery.

In the other direction, in Section \ref{equivcondsec} we provide general and comparatively computable conditions under which a knot in $S^3$ admits a fillable positive surgery. These conditions are given in Theorems \ref{fillsurgS3thm} and \ref{qpslicethm}. For the sake of illustration in this introduction, we have the following result on existence of fillable positive surgeries for particular classes of knots.

\begin{corollary}\label{existencecor}
A knot $K\subset S^3$ admits a fillable positive surgery if it satisfies any of the following conditions.
\begin{enumerate}
\item[(a)] $K$ is isotopic to the closure of a positive braid.
\item[(b)] Some smooth surgery on $K$ (with positive surgery coefficient) yields a lens space.
\item[(c)] $K$ is obtained as a twisted satellite $P_m(C)$, where:
\begin{itemize}
\item $C\subset S^3$ is a knot that admits a fillable positive surgery, 
\item $m$ is the smooth surgery coefficient corresponding to some such fillable surgery on $C$, and 
\item the pattern $P$ is a braided fillable pattern in the sense of Definition \ref{braidpatterndef}.
\end{itemize}
\item[(d)] In particular a cabled knot $K = C_{p,q}$ admits a fillable positive surgery, so long as $C$ itself has that property and the cabling parameters satisfy $q/p > m$, where $m$ is as in (c).

\end{enumerate}
\end{corollary}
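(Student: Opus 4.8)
The plan is to reduce all four cases to a single geometric criterion and then verify that criterion class by class. By Theorem \ref{mainthm}, a knot $K\subset S^3$ admits a fillable positive surgery precisely when some weak filling of $(S^3,\xi_{std})$ contains a properly embedded symplectic disk whose boundary is a positive transverse pushoff of $K$. Since every weak filling of $(S^3,\xi_{std})$ is symplectomorphic to a blowup $(\#^N\cptwobar)\setminus \Int B^4$ of the standard ball, this is equivalent to asking that $K$ bound a properly embedded symplectic disk in some such blowup. Blowing the double points up and down, I would record the working form of the criterion that I will use throughout: $K$ admits a fillable positive surgery if and only if $K$ bounds a properly immersed symplectic disk in $B^4$ with only positive transverse double points. (This is consistent with Corollary \ref{c*g*cor}, since smoothing the $d$ positive nodes of such a disk produces a genus-$d$ slice surface, forcing $c_*(K)=g_*(K)=d$.)

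For (a), I would produce such a nodal disk directly from the braid. The Bennequin surface of a positive braid $\beta=\sigma_{i_1}\cdots\sigma_{i_\ell}$ on $n$ strands is a genus $g=(\ell-n+1)/2$ quasipositive surface, which sits in $B^4$ as a piece of a symplectic (indeed complex) curve with boundary the transverse closure $\hat\beta$. The step to carry out is to trade the genus for nodes: I would reduce this surface to an immersed symplectic disk with exactly $g$ positive double points, so that the criterion above applies. For (b), I would work on the contact side instead. If a positive smooth surgery on $K$ yields a lens space $L$, then $K$ is a fibered, strongly quasipositive L-space knot; performing the corresponding positive contact surgery on a maximal Thurston--Bennequin Legendrian representative lands in a tight, Stein-fillable contact structure on $L$. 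Identifying the surgered contact structure with a fillable one then gives a weakly fillable $\xi^-_n(\K)$, which is exactly the assertion that $K$ admits a fillable positive surgery.

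Case (c) is the main construction and the heart of the argument. Starting from the weak filling $Z_C$ of $(S^3,\xi_{std})$ and the symplectic disk $\Delta_C$ bounding $C$ supplied by Theorem \ref{mainthm}, I would fix a symplectic tubular neighborhood $N$ of $\Delta_C$ and put it in normal form as a symplectic disk bundle whose framing is the smooth surgery coefficient $m$. The braided fillable pattern $P$ of Definition \ref{braidpatterndef} then sits in the boundary solid torus $\partial N\cap S^3$ as a closed braid transverse to the disk fibers; sweeping this braid across the fibers of $N$ builds a symplectic surface $\Sigma\subset N$ with $\partial\Sigma=P_m(C)$, and the fillability condition on the pattern is designed precisely so that $\Sigma$ is a disk once its positive nodes are resolved. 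Gluing $\Sigma$ into the blowup of $Z_C$ and invoking Theorem \ref{mainthm} finishes (c). Case (d) is then the specialization to the cable pattern: re-expressing the $(p,q)$-cable relative to the $m$-framing turns it into the positive braid $(\sigma_1\cdots\sigma_{p-1})^{\,q-mp}$, which is a braided fillable pattern exactly when $q-mp>0$, i.e. $q/p>m$, so (c) applies.

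The hard part will be the symplectic bookkeeping rather than the topology. In (c) the two delicate points are the symplectic disk-bundle normal form for $N(\Delta_C)$ and the symplectic extension of the braided pattern across the fibers; one must check that the swept surface $\Sigma$ is genuinely $\omega$-symplectic and meets $S^3=\partial Z_C$ transversely in the correct transverse representative of $P_m(C)$. In (a) the analogous difficulty is realizing the genus-to-node reduction symplectically, with honest positive transverse double points. The lens space case (b) instead hinges on the contact-geometric input of identifying the surgered contact structure on $L$ as a fillable one; granting that identification, (b) follows immediately from the definition.
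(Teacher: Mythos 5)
Your reduction to the criterion ``$K$ bounds a properly immersed symplectic disk in $B^4$ with only positive transverse double points'' is exactly the paper's (Theorem \ref{fillsurgS3thm} together with Remark \ref{doubleptrem}), and your treatment of (c) and (d) matches the paper's in substance: a symplectic normal form $D^2\times D^2(\epsilon)$ for a neighborhood of $\Delta_C$, insertion of the pattern's nodal symplectic disk --- which Definition \ref{braidpatterndef} already hands you, so no ``sweeping across the fibers'' construction is needed or wanted --- and the observation that the framing discrepancy is $-\Delta_C\cdot\Delta_C=m$. The cable case is then the specialization $C_{p,q}=P_m(C)$ with $P=T(p,q-mp)$, as you say.

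The genuine gaps are in (a) and (b). In (a), ``trade the genus for nodes'' is not an available operation: there is no general procedure converting an embedded symplectic genus-$g$ surface in $B^4$ into an immersed symplectic disk with $g$ positive nodes and the same boundary. If there were, it would apply to the quasipositive surface bounded by the positive knot $7_4$ and force $c_*(7_4)=g_*(7_4)=1$, contradicting Owens--Strle's computation $c_*(7_4)=2$ (this is precisely the point of Corollary \ref{counterexcor}). So the reduction must exploit something specific to positive braids, and you have not said what. The paper's mechanism is the Boileau--Weber unknotting algorithm: a positive braid whose closure is a knot can be unknotted by crossing changes, all from positive to negative, performed in the braid diagram; the trace of this homotopy of transverse links in the symplectization is an immersed symplectic annulus with only positive double points, capped off by the embedded symplectic disk bounded by the resulting transverse unknot. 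Without this (or an equivalent) input, (a) is unproved. In (b), you assert that the contact surgery producing the lens space is tight, but that assertion is the entire content of the proof. The paper establishes it by combining strong quasipositivity of lens space knots (giving a transverse representative with $s\ell = 2g-1$ and a Legendrian approximation with $\tb(\K)+|\rot(\K)|=2g_*-1$), the nonvanishing of the Heegaard Floer contact invariant of $\xi^-_n(\K)$ whenever the smooth coefficient is at least $2g_*(K)$ (Golla, Mark--Tosun, using $\varepsilon(K)\geq 0$), and Greene's bound that a lens space surgery coefficient exceeds $2g(K)-1$, which places the lens space coefficient in the range where tightness is known; only then does Honda's classification (every tight contact structure on a lens space is fillable) close the argument. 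As written, your (b) assumes its conclusion at the step ``lands in a tight, Stein-fillable contact structure on $L$.''
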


It is interesting to note that while all {\it known} knots that admit a lens space surgery are also isotopic to the closure of positive braids (up to mirroring; this is observed, for example, in \cite{GodaTera2000}), our proofs of cases (a) and (b) are different. In particular, part (b) along with Corollary \ref{c*g*cor} implies that any knot $K$ with a lens space surgery must  have $g_*(K) = c_*(K)$, which does not seem to be otherwise known in general. Part (c) implies the same property for many $L$-space knots, but {\it a priori} not all; see the discussion in Section \ref{constructionsec}.

The class of knots admitting fillable positive surgeries is much larger than might be suggested by Corollary \ref{existencecor}, however. In Section \ref{equivcondsec} we give a necessary and sufficient condition for fillable positive surgery in terms of a quasipositive braid expression of a knot; this criterion together with the constraints of Corollary \ref{c*g*cor} suffices to prove the following.

\begin{corollary}\label{tablecor} Of the 59 quasipositive knots of up to 10 crossings listed in the KnotInfo database \cite{knotinfo},  48 admit a fillable positive surgery while 11 do not.
\end{corollary}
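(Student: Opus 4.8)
The plan is to treat the two sides of the count separately: use the obstruction of Corollary \ref{c*g*cor} to rule out the 11 knots that admit no fillable positive surgery, and use the characterization in terms of quasipositive braid words (Theorem \ref{qpslicethm}, with Theorem \ref{fillsurgS3thm}) to certify, for each of the remaining 48, a concrete braid expression showing that it does. The input data is the list of 59 quasipositive knots of at most 10 crossings together with the quasipositive braid words recorded for them in \cite{knotinfo}, alongside the tabulated slice genera $g_*$ and the best available bounds on the clasp number $c_*$. Since Theorem \ref{qpslicethm} is a genuine equivalence, in principle the entire count can be read off from that criterion alone; in practice it is cleaner to confirm the negative cases through the clasp-number obstruction, which is independent of the braid combinatorics.

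For the negative direction I would invoke Corollary \ref{c*g*cor}: any knot admitting a fillable positive surgery satisfies $g_*(K) = c_*(K)$. Thus it suffices to isolate, among the 59, exactly the 11 knots for which $c_*(K) > g_*(K)$. The prototype is $K = 7_4$, where $g_* = 1$ while Owens--Strle establish $c_* = 2$ in \cite{OwSt2016}, so that $7_4$ is obstructed. For the remaining obstructed knots one needs, knot by knot, a lower bound on the clasp number strong enough to separate it from the slice genus, drawing on the clasp-number estimates in the literature (e.g.\ \cite{OwSt2016, daemiscaduto:clasp, JZ:clasp}). Once these inequalities are in hand this becomes a bookkeeping exercise.

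For the positive direction I would apply Theorem \ref{qpslicethm}, which gives a criterion for fillable positive surgery directly in terms of a quasipositive braid presentation. For each of the 48 knots one exhibits a quasipositive braid word --- either the one recorded in \cite{knotinfo} or a suitable modification of it --- and verifies that it satisfies the hypothesis of that theorem. Because the criterion is an explicit, checkable condition on the braid, each verification is a finite computation, and compiling all 48 completes this half of the count. Consistency with the obstruction is automatic, since each such knot then has $g_*(K) = c_*(K)$.

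The main obstacle lies in the clasp-number computations underpinning the negative direction. Whereas the slice genus of small knots is thoroughly tabulated, the four-dimensional clasp number $c_*$ is considerably more delicate and is not determined by the standard invariants; establishing $c_*(K) > g_*(K)$ for precisely the 11 obstructed knots, and (implicitly, via the braid criterion) $c_*(K) = g_*(K)$ for the other 48, requires a case-by-case appeal to the sharpest known lower bounds on $c_*$. By contrast, the braid-word verifications for the positive cases, though numerous, are routine once Theorem \ref{qpslicethm} is available.
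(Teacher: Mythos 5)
Your proposal is correct and matches the paper's own argument: the paper certifies the 48 positive cases by exhibiting braid words satisfying the criterion of Theorem \ref{qpslicethm} (taken from KnotInfo or simple modifications, as recorded in Table \ref{table:lowcross}), and rules out the 11 negative cases exactly as you describe, via the obstruction $c_*(K) > g_*(K)$ from Corollary \ref{c*g*cor} with the clasp-number values due to Owens--Strle. No substantive difference in approach.
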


Note that only 6 of the knots referenced in this corollary are lens space knots; these and two others are the only positive braids.

A variety of additional necessary conditions, constructions, and consequences related to fillable positive surgeries for knots in $S^3$ are obtained in Section \ref{S3proofsec}. Among these, stated in Corollary \ref{negdefcor}, is that if $K\subset S^3$ admits a fillable positive surgery with smooth surgery coefficient $r>0$, then every filling of the corresponding contact structure has negative definite intersection form. In particular, the surgery manifold $S^3_r(K)$ is the boundary of a smooth negative definite 4-manifold, which is a nontrivial constraint. In \cite{OwSt2012}, Owens and Strle consider the question of when a positive surgery on a knot $K\subset S^3$ has this property, and define in this context the invariant
\[
m(K) = \inf\{ r\in \cue_{>0}\,|\, \mbox{$S^3_r(K)$ bounds a negative definite 4-manifold}\}.
\]
They prove that $m(K)$ exists for all $K$ (i.e., that for any knot, some positive surgery bounds a negative definite 4-manifold). In a similar spirit, we can define
\[
\mu(K) = \min\{r\in \zee_{\geq 0}\,|\, \mbox{$K$ admits a fillable positive surgery with smooth coefficient $r$}\},
\]
where if no fillable positive surgery exists then we set $\mu(K) = \infty$. Then Corollary \ref{negdefcor} implies that for any $K\subset S^3$ we have $m(K)\leq \mu(K)$. Strictly, the case $\mu(K) =0$ requires a special argument: if $\mu(K) = 0$ then necessarily $K$ is slice, as follows from the discussion on fillable surgery coefficients at the beginning of Section \ref{surgcoeffsec}, and if $K$ is slice it is not hard to show that $m(K) = 0$.

A basic property of $\mu(K)$ is as follows:

\begin{proposition}\label{muboundprop} If $\mu(K)< \infty$ then we have
\[
2g_*(K) \leq \mu(K) \leq 4g_*(K).
\]
The first inequality is strict unless $g_*(K) = 0$.
\end{proposition}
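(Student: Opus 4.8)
The plan is to translate the contact-geometric data into intersection numbers with exceptional spheres. By Theorem~\ref{mainthm}, a fillable positive surgery on $K$ with smooth coefficient $r$ is recorded by a properly embedded symplectic disk $\Delta$ in some weak filling $(Z,\omega)$ of $(S^3,\xi_{std})$ whose boundary is a positive transverse representative $T$ of $K$. Every such filling is a symplectic blowup of the ball, so I would write $Z = B^4 \# k\,\cptwobar$ with exceptional spheres $E_1,\dots,E_k$ and set $a_i = [\Delta]\cdot[E_i]\ge 0$, the last inequality by positivity of intersections of symplectic surfaces. The whole argument then rests on two identities expressing $g_*(K)$ and $r$ through the $a_i$.

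First I would establish the genus identity. Blowing the $E_i$ down turns $\Delta$ into a properly immersed symplectic disk $\bar\Delta\subset B^4$ with boundary $T$ and exactly $d:=\sum_i\binom{a_i}{2}$ positive transverse double points, since a sphere $E_i$ met $a_i$ times yields $\binom{a_i}{2}$ clasps upon blowdown. Resolving these symplectically gives an embedded symplectic surface of genus $d$ with boundary $T$, so the symplectic slice--Bennequin equality yields $\mathrm{sl}(T)=2d-1$; combined with $g_*(K)\le d$ (the resolution bounds $K$) and the slice--Bennequin inequality $\mathrm{sl}(T)\le 2g_*(K)-1$, these pinch to
\[
g_*(K)=d=\sum_i\binom{a_i}{2}.
\]
This is in effect the statement of Corollary~\ref{c*g*cor}.

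The main obstacle is the second identity, computing $r$. Capping $\Delta$ with a Seifert surface of $T$ pushed into the collar of $\partial Z$ produces a closed surface $\hat\Delta$ with $[\hat\Delta]=-\sum_i a_i[E_i]$, whence $[\hat\Delta]^2=-\sum_i a_i^2$. Unwinding the scooping construction of Theorem~\ref{scoopthmintro} --- removing $\nu(\Delta)\cong\Delta\times D^2$ reglues the solid torus $\Delta\times\partial D^2$ so that the new meridian is the longitude of $T$ framed by $\Delta$ --- identifies the smooth surgery coefficient with the disk framing of $T$, namely
\[
r=-[\hat\Delta]^2=\sum_i a_i^2 .
\]
The delicate part here is the framing and orientation bookkeeping that fixes the sign; this is consistent with the negative definiteness of the filling $Z\setminus\nu(\Delta)$ (Corollary~\ref{negdefcor}), and as an internal check the relative adjunction formula recovers $\mathrm{sl}(T)=2d-1$ from $[\hat\Delta]^2$ and $\langle c_1(Z,\omega),[\hat\Delta]\rangle=-\sum_i a_i$.

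With both identities in hand the bounds are immediate. Since $\sum_i a_i^2-\sum_i a_i=2\sum_i\binom{a_i}{2}=2g_*(K)$, every fillable positive surgery has
\[
r=\sum_i a_i^2=2g_*(K)+\sum_i a_i\ge 2g_*(K),
\]
with equality exactly when all $a_i=0$, i.e.\ $Z=B^4$ and $K$ is slice; when $g_*(K)>0$ some $a_i\ge 2$, forcing $\sum_i a_i\ge 2$ and a strict inequality. As this holds for every fillable coefficient, $\mu(K)\ge 2g_*(K)$, strictly unless $g_*(K)=0$. For the upper bound I would produce one economical filling: by Corollary~\ref{c*g*cor}, $K$ bounds an immersed symplectic disk in $B^4$ with exactly $g_*(K)$ positive double points; blowing up each double point once gives a properly embedded symplectic disk in $B^4\#g_*(K)\,\cptwobar$ with all $a_i=2$, which by Theorem~\ref{scoopthmintro} realizes a fillable positive surgery with coefficient $r=\sum_i a_i^2=4g_*(K)$. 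Hence $\mu(K)\le 4g_*(K)$.
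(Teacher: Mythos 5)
Your proposal is correct and follows essentially the same route as the paper: the lower bound comes from the genus identity $2g_*(K)=\sum_j n_j(n_j-1)$ for an embedded symplectic disk in a blowup (Proposition \ref{genusformulaprop}) combined with $r=\sum_j n_j^2$, giving $r=2g_*(K)+\sum_j n_j$, and the upper bound comes from blowing up the $g_*(K)$ positive double points of an immersed symplectic disk in $B^4$ so that every multiplicity equals $2$ (Proposition \ref{4gprop}). The only cosmetic difference is your detour through the slice--Bennequin equality where the paper simply cites that symplectic surfaces in $B^4$ realize the slice genus.
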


This proposition is proved in Section \ref{surgcoeffsec}. Equality can occur for the second inequality in Proposition \ref{muboundprop}, as follows from the next result.

\begin{proposition}\label{torusknotprop} For the positive torus knot $T(p,q)$ we have $\mu(T(p,q)) = \lceil m(T(p,q))\rceil$. 
\end{proposition}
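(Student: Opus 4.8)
The plan is to prove the two inequalities $\mu(T(p,q))\ge \lceil m(T(p,q))\rceil$ and $\mu(T(p,q))\le \lceil m(T(p,q))\rceil$ separately, and to observe that the first is not special to torus knots. Indeed, if $K\subset S^3$ admits a fillable positive surgery with smooth coefficient $r>0$, then Corollary~\ref{negdefcor} says $S^3_r(K)$ bounds a negative definite $4$-manifold, so $r\ge m(K)$; applying this with $r=\mu(K)$ and using that $\mu(K)$ is an integer yields $\mu(K)\ge \lceil m(K)\rceil$ for every $K$. Thus everything reduces to exhibiting, for $r=\lceil m(T(p,q))\rceil$, an honest fillable positive surgery on $T(p,q)$ with smooth coefficient $r$.

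For the upper bound I would work entirely on the filling side via Theorem~\ref{mainthm}. Since every weak filling of $(S^3,\xi_{std})$ is a blowup of $B^4$, it suffices to produce a properly embedded symplectic disk $\Delta$ in some $Z=B^4\#N\cptwobar$ whose boundary is a positive transverse pushoff of $T(p,q)$ and whose induced framing equals $r=\lceil m(T(p,q))\rceil$; the complement $Z-\nu(\Delta)$ is then the required weak filling, and the resulting positive surgery has smooth coefficient $r$. By relative adjunction, a genus-$0$ symplectic $\Delta$ bounding the maximal self-linking transverse representative of $T(p,q)$ has self-intersection $\overline{\mathrm{sl}}(T(p,q))+1=2g_*(T(p,q))$, the smallest value permitted by adjunction. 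The key point is that adjunction alone is not sharp: a homological (lattice) obstruction, equivalent to the negative-definiteness obstruction defining $m$, forces the self-intersection of any such disk up to $\lceil m(T(p,q))\rceil$, and the content of the proposition is that this larger value is actually realized for torus knots.

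To build the disk at self-intersection $\lceil m(T(p,q))\rceil$ I would exploit the algebraic structure of $T(p,q)$ together with the computation of $m(T(p,q))$ from Owens--Strle \cite{OwSt2012}. Concretely, $S^3_r(T(p,q))$ is Seifert fibered for every $r$, and for $r\ge m(T(p,q))$ its minimal negative definite filling is the canonical (sharp) negative definite plumbing attached to the Seifert data; these plumbings arise as minimal resolutions and carry Stein, hence symplectic, structures of disk-complement type in the sense of Theorem~\ref{mainthm}. Once one knows that the minimal negative definite filling at $r=\lceil m(T(p,q))\rceil$ is symplectic and of this form, Theorem~\ref{mainthm} identifies it with the complement of a symplectic disk $\Delta$ as above, giving a fillable positive surgery at coefficient $\lceil m(T(p,q))\rceil$ and hence $\mu(T(p,q))\le\lceil m(T(p,q))\rceil$. (That at least one fillable positive surgery exists—for example at the lens space coefficients $pq\pm1$—is already guaranteed by Corollary~\ref{existencecor}(a),(b), so the set of fillable integer coefficients is a nonempty interval $[\mu,\infty)$ by the blowup remark following Theorem~\ref{scoopthmintro}; the task is only to locate its left endpoint.)

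The main obstacle is precisely this last matching step: proving that the smooth invariant $m(T(p,q))$—an infimum over \emph{all} negative definite fillings—is attained by a symplectic filling of disk-complement type, so that the general inequality $\mu\ge\lceil m\rceil$ is sharp for torus knots. Equivalently, one must show that for torus knots the purely smooth condition ``$S^3_r$ bounds negative definite'' is the \emph{only} obstruction to a fillable positive surgery, i.e.\ that every integer $r\ge m(T(p,q))$ is a fillable coefficient. I expect the crux to be verifying that the canonical negative definite plumbing realizing $m(T(p,q))$ genuinely carries the symplectic structure required by Theorem~\ref{mainthm} (rather than merely being smoothly negative definite), and that its contact boundary is the positive-surgery contact structure; the Seifert and lens-space geometry of torus knot surgeries together with the sharpness results of Owens--Strle are what should make this possible.
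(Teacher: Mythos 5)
Your lower bound $\mu(T(p,q))\ge\lceil m(T(p,q))\rceil$ is exactly the paper's (Corollary \ref{negdefcor} plus Owens--Strle plus integrality), and your framing of the problem as realizing a symplectic disk of self-intersection $-\lceil m\rceil$ in a blowup of $B^4$ is also the right one. But for the upper bound you have not given a proof: you propose to show that the minimal negative definite filling of $S^3_r(T(p,q))$ at $r=\lceil m\rceil$ is a canonical plumbing carrying a Stein structure ``of disk-complement type,'' and then you explicitly flag the verification of this --- that the plumbing is symplectic \emph{and} that its contact boundary is the positive-surgery contact structure $\xi^-_n(\K)$ --- as an unresolved ``obstacle'' and an expectation. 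That is precisely the content of the proposition, so the argument is circular in outline and incomplete in substance. Note also a logical subtlety in your route: Theorem \ref{mainthm} tells you that a filling of $\xi^-_n(\K)$ must be a disk complement; it does not let you promote a filling of \emph{some} contact structure on $S^3_r(T(p,q))$ (e.g.\ the Milnor-fillable or plumbing-boundary one) to a filling of the contact-surgery structure. Identifying those two contact structures is a genuine additional step that your sketch does not address.

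The paper's actual proof (Theorem \ref{torusknotthm}) avoids all of this by a direct construction: $T(p,q)$ is the link of the plane curve singularity $x^p+y^q=0$, and one resolves that singularity by iterated blowups whose multiplicities are governed by the Euclidean algorithm for $p/q$. Each of the $a_j$ blowups at the $j$-th stage has multiplicity $p_j$ and drops the self-intersection of the proper transform by $p_j^2$; the last stage (where $p_n=1$) is not needed because the curve is already smooth there. Summing gives a properly embedded complex (hence symplectic) disk $\Delta$ in a blowup of $B^4$ with $-\Delta\cdot\Delta=a_1p_1^2+\cdots+a_{n-1}p_{n-1}^2=pq-a_n$, which matches Owens--Strle's formula $\lceil m(T(p,q))\rceil=pq-a_n$. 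This is the key computation missing from your proposal; without it (or a completed version of your plumbing argument) the upper bound is unproved.
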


This is proved in Theorem \ref{torusknotthm}. The value of $m(T(p,q))$ was computed by Owens and Strle (and reviewed in Section \ref{surgcoeffsec}), and in particular we have $m(T(2,2n+1)) = 4g_*(T(2,2n+1) = 4n$. Hence the second inequality of Proposition \ref{muboundprop} is an equality in this case. For a ``typical'' torus knot the inequality is strict; for example we have
\begin{align*}
 \mu(T(p,p-1)) &= (p-1)^2 \\
  4g_*(T(p,p-1)) &= 2(p-1)(p-2).
\end{align*}

It follows from these examples that in general, the smallest fillable surgery coefficient $\mu(K)$ is not, for instance, determined by the genus or slice genus of $K$. 

\begin{corollary} If $K$ is a knot that admits a fillable positive surgery, then $m(K)\leq 4g_*(K)$.
\end{corollary}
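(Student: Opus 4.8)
The plan is to combine the two bounds already established for the invariants $m(K)$ and $\mu(K)$; the corollary is a formal consequence of them and requires essentially no new argument. First, since $K$ is assumed to admit a fillable positive surgery, the set of nonnegative integers $r$ for which $K$ admits a fillable positive surgery with smooth coefficient $r$ is nonempty, so by definition $\mu(K)<\infty$. This is exactly the hypothesis of Proposition \ref{muboundprop}, whose right-hand inequality then gives $\mu(K)\leq 4g_*(K)$ immediately.

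Second, I would invoke the inequality $m(K)\leq\mu(K)$ recorded in the discussion preceding the proposition, which follows from Corollary \ref{negdefcor}: a fillable positive surgery on $K$ with smooth coefficient $\mu(K)$ has some filling with negative definite intersection form, so $S^3_{\mu(K)}(K)$ bounds a negative definite $4$-manifold, and hence $m(K)\leq\mu(K)$ directly from the definition of $m(K)$ as an infimum over coefficients $r$ with this property. Chaining the two bounds yields
\[
m(K)\leq\mu(K)\leq 4g_*(K),
\]
which is the assertion. The only point needing attention is the borderline case $\mu(K)=0$: there the inequality $m(K)\leq\mu(K)$ is not covered by the generic argument, but as noted in the discussion $\mu(K)=0$ forces $K$ to be slice and hence $m(K)=0=4g_*(K)$, so the claim still holds. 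Accordingly there is no genuine obstacle here; the statement is a bookkeeping combination of Corollary \ref{negdefcor} and Proposition \ref{muboundprop}.
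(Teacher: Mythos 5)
Your argument is correct and is exactly the one the paper intends: the corollary is the chain $m(K)\leq\mu(K)\leq 4g_*(K)$, where the first inequality comes from Corollary \ref{negdefcor} (via the definition of $m$) and the second from Proposition \ref{muboundprop}, with the borderline case $\mu(K)=0$ handled by sliceness just as in the paper's introductory discussion. Nothing further is needed.
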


Note that Owens-Strle show that for general $K$ one has the bound $m(K)\leq 4u_+(K)$, where $u_+$ is the minimum number of changes of positive crossings in an unknotting sequence for $K$. A similar argument shows $m(K)\leq 4c_+(K)$, where $c_+(K)$ is the minimal number of positive self-intersections in a normally immersed disk in $B^4$ with boundary $K$. In general, these bounds can be arbitrarily larger than $4g_*(K)$ \cite{daemiscaduto:clasp,JZ:clasp}, while as we have seen $m(K) = 4g_*(K)$ can occur. In fact, since there are examples of knots with fillable positive surgery for which $u_+(K) = u(K) > g_*(K)$ (such as the knot $10_{126}$), the bound in the Corollary above is sharper than the unknotting bound. On the other hand, of course, if $K$ admits a fillable positive surgery then $g_*(K) = c_*(K)$ and $c_-(K) =0$, by Corollary \ref{c*g*cor}. We do not know an example of a knot with $\mu(K)<\infty$ for which $\lceil m(K)\rceil <\mu(K)$, but expect that such knots exist; one possibility is the pretzel knot $P(-2,3,7)$ (see Example \ref{pretzelex}).

The paper is organized as follows. In Section \ref{S3proofsec} we assume Theorems \ref{mainthm} and \ref{scoopthmintro} and deduce the rest of the results stated above along with some additional refinements. Section \ref{equivcondsec} describes several conditions equivalent to the existence of a fillable positive surgery, including Theorem \ref{qpslicethm} giving a braid characterization. The latter is used to show that certain twist knots admit fillable positive surgeries, and leads to Table \ref{table:lowcross} in Section \ref{tablesec}, verifying Corollary \ref{tablecor}.

Section \ref{constructionsec} contains constructions and in particular gives the proof of Corollary \ref{existencecor}; it also includes some remarks on the question of whether any $L$-space knot admits a fillable positive surgery. 

In Section \ref{surgcoeffsec} we consider the invariant $\mu(K)$, the minimal fillable positive surgery coefficient. This section includes the proof of Propositions \ref{muboundprop} and \ref{torusknotprop}.

Section \ref{proofsec} contains the proofs of Theorem \ref{mainthm} and \ref{scoopthmintro} and is the technical core of the paper. Section \ref{tablesec} tabulates knots with up to 10 crossings that admit fillable positive surgeries.

{\bf Acknowledgments.} The authors wish to thank Peter Feller and Marco Golla for interesting discussions about the ideas in this paper; thanks also to Ken Baker and Brendan Owens for helpful communications. A portion of this work was carried out while the first author was a research member at the MSRI/SLMath program ``Analytic and Geometric Aspects of Gauge Theory;'' we thank them for their support and hospitality. TM was supported in part by grants from the Simons Foundation (numbers 523795 and 961391). BT was supported in part by grants from NSF (DMS-2105525 and CAREER DMS 2144363) and the Simons Foundation (636841, BT).

\section{Applications and Examples}\label{S3proofsec} For this section, we will assume the results of Theorems \ref{mainthm} and \ref{scoopthmintro}, and will mainly consider Legendrian and transverse knots in the standard contact structure on $S^3$. Before making this specialization we note that, given a knot with a fillable positive surgery, Theorem \ref{mainthm} does not specify directly {\it which} contact surgery will result in a fillable structure. However, we have the following basic observation. Let $Y$ be a 3-manifold and $K\subset Y$ a nullhomologous knot such that $(Y, K) = \partial (Z,\Delta)$ for $\Delta\subset Z$ a properly embedded disk in the 4-manifold $Z$. If one chooses a smooth 2-chain $\Sigma\subset Y$ with boundary $K$ (e.g., a Seifert surface), then framings on $K$ are identified with the integers by declaring the framing induced by $\Sigma$ to correspond to 0. Likewise, $\Sigma$ can be used to define a self-intersection number for $\Delta$ by defining $\Delta\cdot\Delta$ to be the self-intersection of the absolute homology class determined by $\Delta\cup -\Sigma$. (If $Y$ is a rational homology sphere, these definitions are independent of the choice of $\Sigma$.) The following is straightforward, or compare \cite[proof of Theorem 1.14]{CET}.

\begin{lemma}\label{fillsurgcoefflem}  In the situation above, let $Z'$ be the complement of a small tubular neighborhood of $\Delta$, and $Y' = \partial Z'$. Then $Y'$ is obtained from $Y$ by surgery along $K$ with coefficient equal to $-\Delta\cdot\Delta$, where the surgery coefficient and self-intersection are defined with respect to the same Seifert surface.\hfill$\Box$
\end{lemma}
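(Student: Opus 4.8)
The plan is to analyze a tubular neighborhood of $\Delta$ directly and show that excising it performs a Dehn surgery whose coefficient can be read off from the normal framing of the disk. First I would note that since $\Delta$ is a properly embedded disk it is contractible, so its normal $D^2$-bundle in $Z$ is trivial; fix an identification $\nu(\Delta)\cong \Delta\times D^2$. The boundary of this neighborhood, viewed as a manifold with corners, has two faces: the horizontal face $\partial\Delta\times D^2$, which is precisely a tubular neighborhood $\nu(K)\subset Y$ of $K$, and the vertical face $\Delta\times\partial D^2$, a solid torus lying in the interior of $Z$, the two meeting along the corner torus $\partial\Delta\times\partial D^2 \cong T^2$.

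Next I would identify $Y' = \partial Z'$ as a surgery. Removing the open neighborhood from $Z$ deletes $\nu(K)\subset Y$ and exposes the vertical face, so $Y' = (Y\setminus \Int\nu(K))\cup_{T^2}(\Delta\times\partial D^2)$, where the glued-in solid torus $\Delta\times\partial D^2$ has meridian disk $\Delta\times\{q\}$ for $q\in\partial D^2$. Thus $Y'$ is obtained from $Y$ by Dehn surgery along $K$, and the surgery slope is the boundary $\partial\Delta\times\{q\} = K\times\{q\}$ of this meridian disk. This curve is exactly the pushoff of $K$ determined by the normal framing of $\Delta$ (well defined, since any two trivializations of the normal bundle over the contractible $\Delta$ are homotopic); call the corresponding integral framing $f$, measured against the chosen Seifert surface $\Sigma$. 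By definition of the surgery coefficient relative to $\Sigma$, the surgery producing $Y'$ then has coefficient $f$.

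It remains to identify $f$ with $-\Delta\cdot\Delta$, and this comparison of a framing with a self-intersection number is the only delicate point. I would compute $\Delta\cdot\Delta$ as the signed count of zeros of a generic section $s$ of $\nu(\Delta)$ whose boundary restriction is the Seifert-framed pushoff of $K$; this is exactly the section realizing the class $[\Delta\cup-\Sigma]$ entering the definition of $\Delta\cdot\Delta$, as the $\Sigma$-part lies in $\partial Z$ and may be pushed off itself in a collar. Expressed in the trivialization coming from the disk framing, $s$ becomes a map $\Delta\to\R^2$ whose restriction to $\partial\Delta$ has winding number equal to the Seifert framing measured against the disk framing, namely $-f$. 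Since the signed number of zeros of a section equals the boundary winding number of its trivialized restriction, we obtain $\Delta\cdot\Delta = -f$, i.e.\ $f = -\Delta\cdot\Delta$, which combined with the previous paragraph gives the surgery coefficient $-\Delta\cdot\Delta$.

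The main obstacle I anticipate is purely one of bookkeeping conventions: one must fix compatible orientation and sign conventions for (i) the surgery coefficient relative to $\Sigma$, (ii) the relative Euler number, or self-intersection, of $\Delta$ against the Seifert framing, and (iii) the identification of the disk-framed pushoff as the surgery slope, and verify that these conspire to produce the single sign in $-\Delta\cdot\Delta$. Everything else is standard: triviality of the normal bundle, the corner decomposition of $\partial\nu(\Delta)$, and the recognition of the construction as a Dehn surgery. When $Y$ is a rational homology sphere the independence of all quantities from the auxiliary $\Sigma$ is automatic, and in general the stated equality holds because replacing $\Sigma$ shifts both the framing $f$ and the self-intersection $\Delta\cdot\Delta$ by the same amount, consistent with the requirement that the surgery coefficient and self-intersection be computed against the same Seifert surface.
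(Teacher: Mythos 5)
Your proof is correct, and since the paper leaves Lemma \ref{fillsurgcoefflem} as a straightforward exercise (citing \cite{CET} for comparison), your argument is precisely the standard one the authors intend: decompose $\partial\nu(\Delta)$ into horizontal and vertical faces to exhibit $Y'$ as Dehn surgery along the disk framing, then identify that framing with $-\Delta\cdot\Delta$ by counting zeros of a normal section whose boundary restriction is the Seifert-framed pushoff. The sign you obtain is consistent with the paper's later usage (e.g.\ $r=-\Delta\cdot\Delta=\sum_j n_j^2$ in Section \ref{surgcoeffsec}), so the bookkeeping you flag does work out as claimed.
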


In particular, if we are given $(Y,\xi)$ and Legendrian $\K\subset Y$, and are interested in the smallest positive contact surgery coefficient $n$ such that $\xi^-_n(\K)$ is fillable, we are equivalently interested in the greatest self-intersection number of a symplectic disk bounded by a transverse pushoff of $\K$ in some filling of $(Y, \xi)$. If that largest self-intersection number is $-p$, then the corresponding fillable contact surgery coefficient is $p - \tb(\K)$.

\begin{remark} In light of Lemma \ref{fillsurgcoefflem}, one consequence of Theorem \ref{scoopthmintro} is that if $\Delta\subset Z$ is a properly immersed disk having nullhomologous, positively transverse boundary, in a symplectic manifold weakly filling $(Y, \xi)$, then $-\Delta\cdot\Delta > \tb(\K)$ for any Legendrian approximation $\K$ of $K = \partial \Delta$. This is not hard to see directly: for example, one can attach a Stein 2-handle $H$ to $Z$ along $\K$. Then the Lagrangian core of $H$ admits a symplectic pushoff $\tDelta$ whose boundary is the transverse pushoff $K$, and with self-intersection $\tDelta\cdot\tDelta =\tb(\K) -1$. Gluing $\Delta$ and $\tDelta$ gives a symplectic sphere $S$ embedded in $Z\cup H$, where the latter is a weak filling of its boundary. By \cite{eliashberg:afewremarks}, $Z\cup H$ embeds in a closed symplectic 4-manifold that we can assume has $b^+_2 \geq 2$, and hence by \cite{mcduff90} we must have $S\cdot S < 0$. Since $S\cdot S = \Delta\cdot\Delta + \tDelta\cdot\tDelta$, it follows that $-\Delta\cdot\Delta \geq \tb(\K)$. Granted that the complement of $\Delta$ has fillable boundary given by some contact surgery on $\K$, the inequality must be strict since contact surgery with smooth framing equal to $\tb(\K)$ is overtwisted. (In fact, the same argument applies without the assumption that $K$ is nullhomologous, to show that the smooth surgery that yields $Y'$ in Theorem \ref{scoopthmintro} corresponds to a framing that is greater than the contact framing of $\K$.) 
\end{remark}

\begin{remark}\label{techremark} Strictly, having found a symplectic disk whose boundary is a transverse pushoff $K$ of $\K$, Theorem \ref{scoopthmintro} says we obtain a fillable positive surgery on a Legendrian approximation $\K'$ of $K$, which {\it a priori} is not the same Legendrian knot as the original $\K$. We claim that we can ignore this technicality, i.e. $\K$ and $\K'$ either both admit fillable positive surgeries or neither do. Indeed, since they are approximations of the same transverse knot, $\K$ and $\K'$ are related by negative stabilizations and destabilizations; we claim that a fillable positive contact surgery on $\K'$ is equivalent to some positive contact surgery on $\K$. To see this, recall first that if $\K$ is any Legendrian that has been stabilized, then $\xi_{+1}(\K)$ is overtwisted. Let us write $\K^{(k)}$ for the $k$-fold negative stabilization of $\K$. Then the fact that $\xi_n^-(\K) \cong \xi_{n+1}^-(\K^{(1)})$ for any $n>0$ (see \cite[Lemma 2.6]{LS:surgery})  shows that whenever $0<n\leq k$ we have $\xi_n^-(\K^{(k)}) = \xi_{+1}^-(\K^{(k-n+1)})$ is overtwisted, hence in particular is not weakly fillable. Now, to say that $\K$ and $\K'$ are related as above is equivalent to $\K^{(k)} = \K'^{(k')}$ for some $k$ and $k'$. If $\xi_n^-(\K')$ is fillable, then so is $\xi_{n+k'}^-(\K'^{(k')}) = \xi_{n+k'}^-(\K^{(k)})$ (being equivalent to $\xi_n^-(\K')$), and hence $n+k' > k$. In that case, we have $\xi_{n+k'}^-(\K^{(k)}) = \xi_{n+k'-k}^-(\K)$ is a fillable positive surgery on $\K$.
\end{remark}

\subsection{Equivalent conditions for fillable surgery}\label{equivcondsec} In this and following subsections, we consider Legendrian and transverse knots in $(S^3, \xi_{std})$. The symplectic fillings of the standard 3-sphere are classified by work of Gromov and McDuff \cite{gromov85,mcduff90}, and are all symplectomorphic to a symplectic blowup of the standard 4-ball. From Theorem \ref{mainthm} we get:

\begin{proposition}\label{blowupfillprop} A Legendrian $\K$ in $(S^3, \xi_{std})$ admits a fillable positive contact surgery if and only if a transverse pushoff of $\K$ bounds a symplectic disk embedded in a symplectic blowup of $B^4$. Moreover, any symplectic filling of such a contact surgery is symplectomorphic to the complement of a small neighborhood of such a disk.
\end{proposition}

Since a blowup of $B^4$ has negative definite intersection form, this immediately gives:

\begin{corollary}\label{negdefcor} If $\K$ is a Legendrian knot in $S^3$, and $n>0$ is such that $\xi_n^-(\K)$ is weakly symplectically fillable, then any weak symplectic filling $(Z, \omega)$ of $\xi_n^-(\K)$ has $b_2^+(Z) = 0$.
\end{corollary}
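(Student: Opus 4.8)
The plan is to reduce everything to the diffeomorphism type supplied by Proposition \ref{blowupfillprop} together with the elementary functoriality of the intersection form. By that proposition, any weak filling $(Z,\omega)$ of $\xi_n^-(\K)$ is symplectomorphic — in particular diffeomorphic — to the complement $W - U$ of a tubular neighborhood $U$ of a symplectic disk $\Delta$ in a symplectic blowup $W$ of the standard $B^4$. Thus $W \cong B^4 \# k\,\cptwobar$ for some $k \ge 0$, whose intersection form on $H_2(W)$ is $-\langle 1\rangle^{\oplus k}$ and hence negative definite. First I would record that this purely topological conclusion is all we need: the claim $b_2^+(Z)=0$ depends only on the diffeomorphism type of $Z$, so the symplectic input is entirely absorbed into Proposition \ref{blowupfillprop}.

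The key step is naturality of the intersection pairing under the inclusion $i\colon Z \hookrightarrow W$. For classes $a,b \in H_2(Z)$, represented by $2$-cycles lying in the interior of $Z$, the algebraic intersection number computed in $Z$ agrees with that computed in $W$, since all intersections occur in the common interior; that is, $a\cdot_Z b = i_*(a)\cdot_W i_*(b)$. In particular, if some $a\in H_2(Z)$ had $a\cdot_Z a > 0$, then $i_*(a)\cdot_W i_*(a) > 0$, forcing $i_*(a)\neq 0$ and exhibiting a class of positive square in $H_2(W)$, contradicting the negative definiteness of $W$. Therefore the intersection form on $H_2(Z)$ admits no class of positive self-intersection, which is exactly the statement $b_2^+(Z)=0$.

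The one point requiring a word of care is the behavior of classes in $\ker(i_*)$, since \emph{a priori} such a class might carry a positive square while mapping to zero. This is ruled out by the same naturality identity: if $i_*(a)=0$ then $a\cdot_Z b = i_*(a)\cdot_W i_*(b) = 0$ for every $b$, so $a$ lies in the radical of the intersection form on $Z$ and contributes to neither $b_2^+$ nor $b_2^-$. Consequently $b_2^+(Z)\le b_2^+(W)=0$, and the degenerate classes cause no difficulty. I do not anticipate a genuine obstacle here: once Proposition \ref{blowupfillprop} is in hand the corollary is, as indicated, immediate, and the only substantive content is the classification of fillings of $(S^3,\xi_{std})$ underlying that proposition.
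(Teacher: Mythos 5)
Your argument is correct and follows exactly the route the paper takes: Corollary \ref{negdefcor} is deduced immediately from Proposition \ref{blowupfillprop} together with the negative definiteness of $B^4\# k\cptwobar$. The paper leaves the naturality of the intersection pairing under $Z\hookrightarrow W$ implicit; your write-up simply makes that (standard) step explicit.
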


Since a disk $\Delta$ in a blowup of $B^4$ necessarily has non-positive self-intersection, it also follows from the above and Lemma \ref{fillsurgcoefflem} that for a Legendrian $\K\subset (S^3, \xi_{std})$, every fillable positive contact surgery along $\K$ has nonnegative smooth surgery coefficient. This observation is refined and extended by Proposition \ref{muboundprop}, to be proved in Section \ref{surgcoeffsec}.

The condition that $K$ bounds an embedded symplectic disk in a blowup of $B^4$ can be translated into the condition that $K$ bounds a singular symplectic disk in $B^4$ itself:

 \begin{theorem}\label{fillsurgS3thm} An oriented Legendrian $\K\subset S^3$ admits a fillable positive contact surgery if and only if the positive transverse pushoff of $\K$ is the boundary of a (possibly) singular symplectic disk in $B^4$ having locally holomorphic singularities.
 \end{theorem}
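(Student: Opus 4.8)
The plan is to deduce this from Proposition \ref{blowupfillprop}, which already reduces the existence of a fillable positive surgery to the condition that the positive transverse pushoff $K$ of $\K$ bounds an \emph{embedded} symplectic disk in some symplectic blowup $\widehat{B}$ of $B^4$. Thus it suffices to establish the purely symplectic-geometric equivalence: $K$ bounds an embedded symplectic disk in a blowup of $B^4$ if and only if $K$ bounds a singular symplectic disk in $B^4$ itself whose singularities are locally holomorphic. The two implications are the two directions of symplectic blowing up and blowing down, performed in the interior of $B^4$ so that the boundary transverse knot $K$ is never disturbed.

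For the direction $(\Leftarrow)$, start with a singular symplectic disk $\Delta_0 \subset B^4$ with boundary $K$, all of whose singular points are locally modeled on holomorphic plane-curve germs. Near each singular point I would choose local coordinates and a compatible almost complex structure $J$ that is integrable there and for which $\Delta_0$ is $J$-holomorphic; away from the singularities $J$ may be chosen to tame $\omega$ with $\Delta_0$ symplectic. I would then resolve each singularity by a finite sequence of symplectic blowups at the singular point and its infinitely near points, as in the classical embedded resolution of plane-curve singularities, carried out in the symplectic category using the local integrability of $J$. Each blowup is supported in a small ball in the interior, leaves $K$ fixed, and replaces the germ by its proper transform, which is again symplectic. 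After finitely many such blowups the total transform $\Delta$ is smoothly embedded; since each locally irreducible branch is normalized to a disk and transverse self-intersections are separated by the exceptional divisors, $\Delta$ is again a disk with $\partial\Delta = K$. This exhibits an embedded symplectic disk in a blowup $\widehat{B}$ of $B^4$, and Proposition \ref{blowupfillprop} applies.

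For the direction $(\Rightarrow)$, suppose $\Delta \subset \widehat{B} = B^4\,\#\,k\,\cptwobar$ is an embedded symplectic disk with $\partial\Delta = K$, where $E_1,\dots,E_k$ are the exceptional spheres. Choosing a compatible almost complex structure for which the $E_i$ are holomorphic and perturbing $\Delta$ to be holomorphic (keeping its boundary fixed), positivity of intersections guarantees that $\Delta$ meets each $E_i$ in finitely many points, each positively and with a well-defined local holomorphic multiplicity. Blowing down the $E_i$ one at a time returns $B^4$; the image of $\Delta$ is a symplectic disk $\Delta_0 \subset B^4$ with the same boundary $K$, now carrying one singular point for each cluster of (possibly infinitely near) points at which $\Delta$ met the exceptional divisor. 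Because the blow-down of a $J$-holomorphic curve through the center is again holomorphic for the pushed-forward integrable structure, each such singularity is by construction a holomorphic plane-curve germ, i.e.\ locally holomorphic, which is exactly the required conclusion.

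The main technical obstacle is the interface between the symplectic and holomorphic worlds: one must guarantee that the blow-up and blow-down operations can be performed symplectically---choosing the exceptional spheres small enough that $\omega$ remains positive and the excised neighborhoods fit inside $B^4$---while simultaneously arranging $J$ to be integrable on the compactly supported regions where the resolution takes place, so that the classical statements ``a finite sequence of blowups resolves a plane-curve singularity'' and ``the blow-down of a holomorphic curve is holomorphic'' can be invoked verbatim. The second point requiring care is the bookkeeping that the proper (respectively total) transform of the \emph{disk} remains a disk rather than a higher-genus or disconnected surface; this is an Euler-characteristic computation showing that normalization of locally irreducible holomorphic germs and separation of transverse branches preserve the topological type of a disk. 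Once these local models are in place, the global argument is a finite iteration supported in the interior that never touches $\partial B^4 = S^3$, so the boundary transverse knot $K$---and hence the contact-geometric content via Proposition \ref{blowupfillprop}---is preserved throughout.
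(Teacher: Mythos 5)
Your proposal is correct and follows essentially the same route as the paper: reduce via Proposition \ref{blowupfillprop} to the question of bounding an embedded symplectic disk in a blowup of $B^4$, then pass between the embedded and singular pictures by choosing a compatible almost complex structure making $\Delta$ and the exceptional spheres holomorphic (following McDuff and Boileau--Orevkov) and blowing down, respectively by resolving the holomorphic singularities via symplectic blowups. The paper states these two steps more tersely, but your added detail on positivity of intersections and the local holomorphic models is exactly the content it invokes.
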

\begin{proof}
Let $Z \cong B^4\# k\cptwobar$ be a blowup of the 4-ball equipped with some symplectic structure $\omega$, obtained by blowing up the standard structure on $B^4$. Let $\Delta\subset Z$ be a properly embedded symplectic disk with boundary a positive transverse knot $K\subset S^3$. Adapting arguments of McDuff \cite[Lemma 3.1]{mcduff90} (see also Boileau-Orevkov \cite[proof of Lemma 2]{boilorev}) we can find an almost-complex structure $J$ on $Z$ compatible with $\omega$ such that $\Delta$ is $J$-holomorphic, and such that the exceptional spheres $E_1,\ldots, E_k$ of the blowup are also $J$-holomorphic: in particular each geometric intersection between $\Delta$ and $E_j$ contributes positively to the intersection number. By perturbing $\Delta$ slightly (symplectically) we can suppose that $\Delta$ intersects each $E_j$ transversely if we wish. In any case, the image of $\Delta$ under the blow-down $Z\to B^4$ is then a (possibly) singular symplectic disk whose singularities are modeled on those of a holomorphic curve. 
 
 Conversely, any singular symplectic disk as in the statement can be desingularized by suitable blowups, and remains symplectic in the blown-up 4-ball. 
 \end{proof}
   
 
Recall that a transverse knot in $(S^3, \xi_{std})$ is transversely isotopic to the closure of a braid; two braid closures represent transversely isotopic knots if and only if the closed braids are related by braid isotopy and by positive braid stabilization and destabilization. A braid $\beta$, as an element of the $n$-stranded braid group $\Br_n$, is {\it quasipositive} if it can be expressed as a product of conjugates of positive powers of the standard Artin generators $\sigma_1,\ldots, \sigma_{n-1}\in \Br_n$. We will say that a transverse knot is quasipositive if it is transversely isotopic to the closure of a quasipositive braid. The condition of quasipositivity is intimately tied to symplectic topology. Indeed, Boileau-Orevkov \cite{boilorev} show that if $K$ is the transverse boundary of a symplectic surface smoothly embedded in $B^4$ then $K$ is quasipositive. Conversely, by work of Rudolph \cite{rudolph:algebraic}, a quasipositive knot is the boundary of an algebraic curve (in particular, smooth symplectic surface) in $B^4$.

 Now, a singular symplectic disk can always be smoothed to an embedded symplectic surface of (in general) higher genus. From Theorem \ref{fillsurgS3thm} we infer the following slight refinement of Corollary \ref{c*g*cor}:
 
 \begin{corollary}\label{c*g*refined} If $\K$ is a Legendrian in $S^3$ that admits a fillable positive contact surgery, then the transverse pushoff $K$ of $\K$ is quasipositive. Moreover, $K$ bounds a smoothly and symplectically immersed disk in the 4-ball, having only positive double point singularities, and the number of double points is equal to the slice genus $g_*(K)$. 
 \end{corollary}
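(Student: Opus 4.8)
The plan is to begin from Theorem \ref{fillsurgS3thm}, which supplies a (possibly) singular symplectic disk $\Delta\subset B^4$ with locally holomorphic singularities and boundary the positive transverse pushoff $K$ of $\K$. The first step is to trade the singularities for transverse double points. Since each singularity is modeled on a germ of a holomorphic plane curve, I would deform $\Delta$ inside a small ball about each singular point, staying symplectic, into a local configuration of transverse nodes, the number produced at a given singularity being its $\delta$-invariant. Because the local models are holomorphic (recall that the proof of Theorem \ref{fillsurgS3thm}, following \cite{mcduff90}, realizes $\Delta$ as $J$-holomorphic), positivity of intersections guarantees that every resulting double point is positive and none are negative. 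This yields a symplectically immersed disk $D\subset B^4$ with $\partial D=K$ and exactly $d=\sum_i\delta_i$ positive double points, establishing the qualitative content of the ``Moreover'' clause.

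Next I would smooth all the double points of $D$ symplectically — the oriented resolution of a positive transverse double point is symplectic — to obtain an \emph{embedded} symplectic surface $F\subset B^4$ with $\partial F=K$. Since $F$ is a smoothly and symplectically embedded surface with transverse boundary $K$, the theorem of Boileau--Orevkov \cite{boilorev} applies and shows $K$ is quasipositive, which is the first assertion of the corollary.

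It then remains to identify $d$ with $g_*(K)$. Resolving $d$ positive double points of a disk drops the Euler characteristic by $2d$, so $F$ has one boundary component and genus exactly $d$; in particular the existence of $F$ gives $g_*(K)\le d$. For the reverse inequality I would invoke the self-linking form of the adjunction equality for symplectic surfaces with transverse boundary in $B^4$, namely $\mathrm{sl}(K)=2g(F)-1=2d-1$, together with the slice--Bennequin inequality $\mathrm{sl}(K)\le 2g_*(K)-1$ valid for \emph{any} surface bounding $K$. Combining these gives $d\le g_*(K)$, hence $d=g_*(K)$. As a byproduct, since the clasp number satisfies $c_*(K)\le d=g_*(K)\le c_*(K)$, this simultaneously recovers the equality $c_*(K)=g_*(K)$ of Corollary \ref{c*g*cor}.

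The hard part will be the first step: carrying out the deformation of each locally holomorphic singularity to transverse \emph{positive} double points within the symplectic category, and confirming that the count produced is precisely $\sum_i\delta_i$ with the correct signs. This rests on the classical deformation theory of plane curve singularities — the fact that $\delta$ equals the maximal number of nodes in a nearby deformation — adapted to the symplectic setting via the $J$-holomorphic local models coming from Theorem \ref{fillsurgS3thm}; positivity of the nodes is forced by positivity of intersections. Once this is in place, the genus bookkeeping and the appeal to the adjunction equality are routine.
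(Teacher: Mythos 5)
Your proposal is correct and follows essentially the same route as the paper: resolve the holomorphic singularities of the disk from Theorem \ref{fillsurgS3thm} into positive nodes, smooth to an embedded symplectic surface to get quasipositivity via Boileau--Orevkov, and identify the node count with $g_*(K)$ using the fact that symplectic surfaces in $B^4$ realize the slice genus (which you re-derive from the Bennequin-type adjunction equality plus slice--Bennequin, where the paper simply cites \cite{boilorev, rudolph93}). The only real difference is that your ``hard step'' -- deforming a general plane-curve singularity to $\delta$ nodes symplectically -- is sidestepped in the paper by first perturbing the embedded disk in the blowup to meet the exceptional spheres transversely, so that after blowing down the only singularities are ordinary multiple points, whose perturbation to double points is elementary.
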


\begin{proof} The first claim follows from the discussion above. For the second, we make a symplectic perturbation of an embedded disk in a blowup of $B^4$ bounded by $K$ such that it intersects the exceptional spheres transversely. Observe that if for some exceptional sphere $E_j$ we have that $\Delta\cap E_j$ is transverse and consists of $m_j$ points, then the corresponding singularity after blowing down is an ordinary multiple point of order $m_j$.  Now make a further perturbation after the blowdown so that any higher-order multiple points become collections of ordinary double points. These are all positive double points because the singularity has a holomorphic model. To see there are exactly $g_*(K)$ double points, observe that we can find a nearby embedded symplectic surface in $B^4$ (smoothing the double points) with boundary $K$, whose genus equals the number of double points in the immersed disk. A symplectic surface bounding $K$ necessarily has genus equal to the slice genus \cite{boilorev, rudolph93}.
\end{proof}

\begin{remark}\label{doubleptrem} By the argument in the preceding proof, the condition in Theorem \ref{fillsurgS3thm} of bounding a singular symplectic disk with holomorphic singularities can be replaced by the condition that the symplectic disk have only ordinary double point singularities (all positive). In other words, the converse of Corollary \ref{c*g*refined} is true as well.
\end{remark}

In general, we expect that without the condition that the disk be symplectically immersed, the converse of Corollary \ref{c*g*refined} is false. However if we specialize to transverse knots whose underlying knot type is slice, i.e. bounds an embedded disk in $B^4$, then this condition is automatic:

\begin{corollary}\label{qpslicecor} A Legendrian representative $\K$ of a slice knot $K\subset S^3$ admits a fillable positive contact surgery if and only if a transverse pushoff of $\K$ is transversely isotopic to the closure of a quasipositive braid.
\end{corollary}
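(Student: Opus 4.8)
The forward implication needs no sliceness hypothesis and is already in hand: by Corollary \ref{c*g*refined}, if $\K$ admits a fillable positive contact surgery then its transverse pushoff is quasipositive, which by the definition given above means precisely that it is transversely isotopic to the closure of a quasipositive braid. So I would dispose of this direction in a single line, citing Corollary \ref{c*g*refined}.

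For the converse, I would begin from the assumption that the positive transverse pushoff $K$ of $\K$ is transversely isotopic to the closure of a quasipositive braid $\beta$. By Rudolph's construction \cite{rudolph:algebraic}, $\beta$ bounds its associated quasipositive surface $\Sigma\subset B^4$, which is symplectic and has $K$ as its transverse boundary. The crucial point is that $\Sigma$ realizes the slice genus: being a symplectic surface bounded by $K$, its genus equals $g_*(K)$ by \cite{boilorev, rudolph93}. (Concretely, if $\beta$ is a product of $k$ band generators on $n$ strands then $\chi(\Sigma) = n - k$, and this identity amounts to the sharpness of the slice--Bennequin inequality for quasipositive knots.)

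Now the sliceness hypothesis does its work: $g_*(K) = 0$, so $\Sigma$ has genus zero, i.e. it is a smoothly embedded symplectic disk in $B^4$ with boundary $K$. An embedded disk is in particular a singular symplectic disk with no singularities, so by Theorem \ref{fillsurgS3thm} the Legendrian $\K$ admits a fillable positive contact surgery. The one technical point to check is that the property of bounding such a disk is invariant under transverse isotopy, permitting the passage from the quasipositive braid closure back to $K$ itself; this is immediate since admitting a fillable positive surgery depends only on the transverse isotopy class of the pushoff, as recorded in the discussion following Theorem \ref{mainthm}.

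I expect no serious obstacle: the argument is essentially an assembly of cited results, and the genuinely new observation is simply that for a slice knot, quasipositivity promotes the \emph{a priori} higher-genus symplectic surface of Rudolph to an honest embedded disk, which is exactly the input required by Theorem \ref{fillsurgS3thm}. This is precisely the gain from restricting to slice knots: in general (Remark \ref{doubleptrem}) one can only guarantee an immersed disk with $g_*(K)$ positive double points, and here that number vanishes.
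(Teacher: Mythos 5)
Your argument is correct and follows the same route as the paper: the forward direction is Corollary \ref{c*g*refined}, and the converse uses the fact that a quasipositive knot bounds a symplectic surface in $B^4$ realizing the slice genus, which for a slice knot must therefore be an embedded disk, yielding a fillable surgery. The paper's version is just a more compressed statement of exactly this reasoning.
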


\begin{proof} If a transverse knot $K$ is slice and quasipositive, then a symplectic surface in $B^4$ bounded by $K$ must be a disk. In particular this provides the embedded symplectic disk in a filling of $S^3$ that yields a fillable surgery. The ``only if'' follows from Corollary \ref{c*g*refined}.
\end{proof}

\begin{corollary}\label{lagslicecor} Let $\K$ be a Legendrian knot such that:
\begin{itemize}
\item The underlying knot type of $\K$ is slice,
\item The Thurston-Bennequin invariant of $\K$ is $\tb(\K) = -1$. 
\end{itemize}
Then $\K$ is Lagrangian slice in $B^4$ if and only if the transverse pushoff of $\K$ is quasipositive as in Corollary \ref{qpslicecor}.
\end{corollary}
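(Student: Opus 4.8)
The plan is to reduce to an equivalence between Lagrangian and symplectic disks and then treat the two implications separately. Since $K$ is slice, Corollary~\ref{qpslicecor} identifies quasipositivity of the transverse pushoff of $\K$ with the existence of a fillable positive surgery, and by Theorem~\ref{fillsurgS3thm} together with the proof of Corollary~\ref{c*g*refined} this is the same as requiring that the positive transverse pushoff $K_+$ bound an embedded symplectic disk in $B^4$; by the adjunction equality such a disk has $\mathrm{sl}(K_+) = 2g_*(K) - 1 = -1$. Thus it suffices to prove that $\K$ bounds a Lagrangian disk in $B^4$ if and only if $K_+$ bounds a symplectic disk there. It will be convenient to record the numerical bookkeeping that governs the boundaries: a Lagrangian disk filling forces $\tb(\K) = -1$ and $\rot(\K) = 0$, while the relation $\mathrm{sl}(K_+) = \tb(\K) - \rot(\K)$ shows that, given $\tb(\K) = -1$, a symplectic disk bounded by $K_+$ forces $\rot(\K) = 0$ as well.

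For the forward implication I would begin with a Lagrangian disk $L$, $\partial L = \K$, and manufacture a symplectic disk inside a Weinstein neighborhood of $L$. Using the cotangent model $T^*D^2$ for this neighborhood, together with the normal form for a Lagrangian with Legendrian boundary along $\partial L$, I take the antiholomorphic graph over $L$: in local complex coordinates in which $L = \{\mathrm{Im}\,z_1 = \mathrm{Im}\,z_2 = 0\}$ this is the surface $\{z_2 = \varepsilon \bar z_1\}$, which for small $\varepsilon$ is globally symplectic---note that this is a genuinely different disk, not a small perturbation of $L$, since the interior of $L$ is Lagrangian---and which meets $\partial B^4$ in a transverse pushoff of $\K$. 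A short sign computation identifies this boundary with the positive pushoff $K_+$. The result is an embedded symplectic disk in $B^4$ bounded by $K_+$, so $K_+$ is quasipositive by Boileau--Orevkov \cite{boilorev}, and Corollary~\ref{qpslicecor} finishes this direction.

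The reverse implication is the crux, precisely because a symplectic disk does not sit in a cotangent-type neighborhood and cannot be converted into a Lagrangian one by any local modification; one must instead construct a Lagrangian disk, which (being a disk in $B^4$) is automatically exact, so no h-principle is available. Here I would fix a quasipositive braid with closure $K_+$; because $K$ is slice the associated quasipositive surface has genus $g_*(K) = 0$ and is a disk. I then aim to realize it as a \emph{decomposable} Lagrangian filling, assembled from a disjoint union of standard Lagrangian disks bounding Legendrian unknots by a sequence of Lagrangian saddle (pinch) cobordisms, one per band of the braid word, in the spirit of \cite{HaySab}. This produces a Lagrangian disk whose Legendrian boundary $\K'$ has $\tb(\K') = -1$, $\rot(\K') = 0$, and transverse pushoff $K_+$. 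To transfer the conclusion to the given $\K$ I invoke the slice--Bennequin bound: for a slice knot the maximal Thurston--Bennequin number equals $2g_*(K) - 1 = -1$, so $\K$ and $\K'$ are both non-destabilizable Legendrian approximations of $K_+$ with $\tb = -1$, and hence Legendrian isotopic provided the maximal-$\tb$ approximation of a transverse knot is unique.

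The two steps I expect to be delicate are both in the reverse direction: realizing the bands of a \emph{quasipositive} (rather than merely positive) braid word by orientation-compatible Lagrangian saddles---the conjugating subwords make this nonobvious, and it may be cleaner to cite an existing Lagrangian-filling construction than to build the pinch sequence by hand---and the final identification of $\K$ with $\K'$, which rests on the uniqueness of the maximal-$\tb$ Legendrian representative of $K_+$. The forward construction and the numerical bookkeeping are, by contrast, routine.
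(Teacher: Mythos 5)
Your forward direction is essentially the paper's: perturb the Lagrangian disk to a symplectic one with positively transverse boundary (the paper simply cites \cite{CGHS:lagfill,eliashberg:2knots} rather than writing out the antiholomorphic graph) and apply Boileau--Orevkov. The reverse direction, however, has a fatal gap. You propose to build a Lagrangian disk filling directly from a quasipositive braid word for $K_+$ by realizing each band as a Lagrangian saddle cobordism. No such construction exists for general quasipositive braids, and in fact none can exist: the mirror of $8_{20}$ is slice and quasipositive, so your construction would hand it a decomposable Lagrangian disk filling whose Legendrian boundary necessarily has $\tb = -1$; but, as the paper notes in the discussion immediately following this corollary, that knot admits no Legendrian representative with Thurston--Bennequin number $-1$ at all (its maximum is $-2$), and it is not Lagrangian slice. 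The pinch-move constructions of \cite{HaySab} apply to positive braids, where the bands are unconjugated generators; the conjugating subwords in a quasipositive band are precisely the obstruction, not a technicality. Relatedly, your appeal to the slice--Bennequin ``bound'' as an \emph{equality} between the maximal Thurston--Bennequin number and $2g_*(K)-1$ is false in general (it is only an inequality, again witnessed by the mirror of $8_{20}$), and the uniqueness of the maximal-$\tb$ Legendrian approximation of a transverse knot is an unproved Legendrian-simplicity assumption that fails for general knot types, so even if you had built some $\K'$ you could not transfer the conclusion to the given $\K$.

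The paper's reverse direction avoids all of this by never constructing a Lagrangian disk by hand. From quasipositivity plus sliceness one gets an embedded symplectic disk $\Delta\subset B^4$ bounding $K$; Theorem \ref{scoopthmintro} says the complement of a tubular neighborhood of $\Delta$ weakly fills some positive contact surgery on $\K$; a homology computation (the complement has the homology of $S^1\times B^3$) pins the smooth surgery coefficient to $0$, hence the contact coefficient to $+1$ since $\tb(\K)=-1$; and the theorem of \cite{CET} that contact $+1$ surgery on $\K$ is fillable if and only if $\K$ is Lagrangian slice closes the loop. That last citation is the key input your argument is missing: it is what converts the existence of a symplectic disk into the existence of a Lagrangian one bounded by the \emph{given} Legendrian $\K$.
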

\begin{proof} A Lagrangian disk with Legendrian boundary can be perturbed to a symplectic disk with positively transverse boundary \cite{CGHS:lagfill,eliashberg:2knots}, hence by \cite{boilorev} again we find that if $\K$ is Lagrangian slice then the pushoff of $\K$ is quasipositive.

For the converse, quasipositivity means  that the transverse pushoff $K$ bounds an embedded symplectic (indeed complex) surface in $B^4$, which necessarily realizes the 4-ball genus of $K$. Since $K$ is slice we find that $K$ bounds an embedded symplectic disk $\Delta\subset B^4$, whose complement is then a symplectic filling of some positive contact surgery on $\K$ by Theorem \ref{scoopthmintro}. The complement of $\Delta$ in $B^4$ has the homology of $S^1\times B^3$, and its boundary the homology of $S^1\times S^2$. It follows that the topological surgery coefficient is necessarily 0, and since $\tb(\K)=  -1$ the contact surgery coefficient is $+1$. But by \cite{CET}, contact $+1$ surgery on $\K$ is fillable if and only if $\K$ is Lagrangian slice.
 \end{proof}
 
 The distinction between the situations of the two preceding  corollaries, both dealing with slice knots, can be viewed either in terms of the  surgery coefficient resulting in a fillable surgery, or in terms of the Thurston-Bennequin number. In either corollary, we find a fillable surgery by removing an embedded symplectic disk from the 4-ball; the resulting 3-manifold is necessarily obtained by smooth 0-surgery. The corresponding contact surgery coefficient depends on the Thurston-Bennequin number of $\K$; for example the mirror of the knot $8_{20}$ is slice, quasipositive, and admits a Legendrian representative having $\tb(\K) = -2$. The fillable contact manifold arising as the boundary of the complement of a symplectic slice disk for $\K$ is then smooth $0$-surgery, and contact $+2$ surgery on $\K$. Contact $+1$ surgery on this Legendrian knot is not fillable since $8_{20}$ is not Lagrangian slice \cite{CET}, corresponding to the fact that $8_{20}$ does not admit a Legendrian representative with Thurston-Bennequin number $-1$ (this also follows since the Heegaard Floer contact invariant of the corresponding contact structure vanishes, see \cite{MT:surgery}). 
 
\begin{remark} It appears to be an open question whether any slice knot that admits a Legendrian representative having $\tb = -1$ must be Lagrangian slice. In light of Corollary \ref{lagslicecor}, this is equivalent to the {\it a priori} weaker condition that the knot bound a symplectic disk in $B^4$, or equivalently that such a knot must have a quasipositive pushoff.
\end{remark}
 
 The condition for existence of a fillable positive surgery given in Theorem \ref{fillsurgS3thm} has an equivalent formulation in terms of quasipositive braid expressions. As mentioned previously, a braid $\beta\in\Br_n$ is said to be quasipositive if it can be expressed as a product of expressions of the form $w\sigma w^{-1}$, for $\sigma$ a standard generator of $\Br_n$ and $w\in \Br_n$ any element. Such an expression is called a {\it positive band}. In fact, any decomposition of $\beta$ as a product of positive bands can be thought of as arising from an algebraic curve,  specifically a smooth algebraic subset $C = \{F(z,w) = 0\}\subset D^2\times \cee \subset \cee\times \cee$, such that
 \begin{itemize}
 \item $F$ is a polynomial of the form $w^n +  f_{n-1}(z) w^{n-1} + \cdots +f_0(z)$. In particular, for fixed $z$ the equation $F(z,w) = 0$ has exactly $n$ solutions counted with multiplicity. It follows that if $D^2(R)$ denotes the disk of radius $R$ in $\cee$ (and we write $D^2$ for $D^2(1)$), then the curve $C$ is contained in $D^2 \times D^2(R)$ for sufficiently large $R$.
 \item The intersection $C\cap S^1\times D^2(R) \subset D^2\times D^2(R)$ is equivalent to the closure $\hat\beta$ of $\beta$. Here we smooth the corners of $D^2\times D^2(R)$ and identify the boundary with $S^3$, where the complex tangencies determine the standard contact structure.
\item The map $C\to D^2$ induced by the projection to $z$ is a simple branched covering, with branch points in 1-1 correspondence with the positive bands in the decomposition of $\beta$.
\item The braid word $\beta$ arises as the monodromy of $C$ around $\partial D^2$, in the sense described in \cite{rudolph:algebraic}.
\end{itemize}
The fact that any quasipositive braid closure arises this way was proved by Rudolph \cite{rudolph:algebraic}. We are interested in a variation on this situation, in which the curve $C$ is not necessarily smooth but may have nodes, and also is required to have genus 0. With this in mind, let us call an element of the braid group $\Br_n$ of the form $w \sigma^2 w^{-1}$ a {\it positive node}, where again $\sigma$ is a standard generator.

\begin{theorem}\label{qpslicethm} A Legendrian knot $\K$ in $(S^3, \xi_{std})$ admits a fillable positive contact surgery if and only if a transverse pushoff $K$ of $\K$ is transversely isotopic to the closure of a braid of the form
\[
\beta = \beta_1\cdots \beta_k\in \Br_n
\]
where each $\beta_j$ is either a positive band or a positive node, and such that there are exactly $n-1$ positive bands among the $\beta_j$.
\end{theorem}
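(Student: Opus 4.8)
The plan is to reduce the statement to the characterization of fillable surgeries already in hand and then translate it into braid language through Rudolph's theory of braided surfaces. By Theorem~\ref{fillsurgS3thm} together with Remark~\ref{doubleptrem}, a Legendrian $\K$ admits a fillable positive contact surgery if and only if its positive transverse pushoff $K$ bounds a symplectically immersed disk $\Delta\subset B^4$ whose only singularities are positive transverse double points. So the task is purely to show that the existence of such an immersed disk is equivalent to $K$ being the closure of a braid of the asserted form. The dictionary I will use is the standard one coming from viewing $B^4$ as $D^2\times D^2(R)$ and projecting a (piecewise) holomorphic model of $\Delta$ to the first factor: each simple branch point of the projection contributes a positive band $w\sigma w^{-1}$ to the monodromy around $\partial D^2$ (the two sheets undergo a positive half-twist, local model $w^2=z$), while each node contributes a positive node $w\sigma^2 w^{-1}$ (the two transverse sheets $w=\pm z$ wind once around one another, a full positive twist). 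Positivity of both is forced by the complex orientation and positivity of intersections for $J$-holomorphic curves.

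For the forward direction, I would first invoke Theorem~\ref{fillsurgS3thm} to realize $\Delta$ as $J$-holomorphic for a compatible $J$ (adapting the McDuff and Boileau--Orevkov arguments already cited), and then put $\Delta$ in braided position over $D^2$, so that the projection $\Delta\to D^2$ is a simple branched covering of some degree $n$ whose only failures of embeddedness are the positive nodes. Reading the monodromy around $\partial D^2$ along a standard system of paths to the critical values then yields a word $\beta=\beta_1\cdots\beta_k$ in which each $\beta_j$ is a positive band or a positive node, with $\hat\beta=K$. The count of bands is pinned down by Riemann--Hurwitz applied to the normalization $\tilde\Delta$: since $\tilde\Delta$ is a disk, $\chi(\tilde\Delta)=1$, and as the nodes do not ramify the covering we get $1=n\chi(D^2)-b=n-b$, whence exactly $b=n-1$ positive bands.

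The converse reverses this construction. Given $\beta=\beta_1\cdots\beta_k\in\Br_n$ with exactly $n-1$ positive bands and the remaining factors positive nodes, I would build the corresponding braided surface in $D^2\times D^2(R)\cong B^4$ by realizing each positive band as a simple branch point and each positive node as a transverse self-intersection, producing a symplectically immersed surface $\Delta$ with positive double points and $\partial\Delta=\hat\beta=K$. Riemann--Hurwitz again controls the topology: the normalization is a branched cover of $D^2$ of degree $n$ with exactly $n-1$ branch points, so $\chi=n-(n-1)=1$ and $\tilde\Delta$ is a disk. Thus $\Delta$ is a symplectically immersed disk with only positive double points bounding $K$, and Theorem~\ref{fillsurgS3thm} (via Remark~\ref{doubleptrem}) delivers the fillable positive surgery.

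I expect the main obstacle to be the braiding step in both directions: making precise that a symplectic (holomorphic) immersed disk can be isotoped to braided position with its singularities split as simple branch points plus transverse nodes, and conversely that an abstract band/node factorization assembles to a genuine symplectic immersed surface. This is where Rudolph's braided-surface machinery must be extended from the smooth (all-bands) case to the nodal case; the new local input is the full-twist monodromy computation at a transverse double point of a holomorphic curve, together with the positivity of all resulting bands and nodes, both of which follow from positivity of intersections. By contrast, the Euler-characteristic bookkeeping that isolates the ``exactly $n-1$ bands'' condition is elementary once the braided model is in place.
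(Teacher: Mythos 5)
Your proposal is correct and follows essentially the same route as the paper: reduce via Theorem~\ref{fillsurgS3thm} and Remark~\ref{doubleptrem} to bounding a symplectically immersed disk with positive nodes, braid it \`a la Boileau--Orevkov for the forward direction, invoke the Rudolph/Orevkov realization of band-and-node factorizations for the converse, and pin down the count of bands with Riemann--Hurwitz (using that $\hat\beta$ is a knot to get connectedness). No substantive differences from the paper's argument.
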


It is not hard to see that the condition on the number of bands in the expression of the statement is equivalent to the requirement that there are exactly $g_*(K)$ positive nodes in that expression.

\begin{proof} By Theorem \ref{fillsurgS3thm} and Remark \ref{doubleptrem}, the claim is equivalent to the assertion that $K$ bounds an immersed symplectic disk in $B^4$ with only positive double points if and only if it is equivalent to a braid of the given form. The forward implication follows from an adaptation to the immersed case of Boileau-Orevkov's proof that a symplectic surface in $B^4$ is a quasipositive (or ``positively braided'') surface in the sense of \cite[Definition 1]{boilorev} (see also \cite{rudolph:special}). We note that the monodromy associated to a simple double point is the square of an Artin generator and thus each double point gives rise to a positive node factor in $\beta$. The remaining positive bands correspond to simple branch points of the covering $C\to D^2$;  the restriction on the number of positive bands is equivalent to the condition that the surface be irreducible and have genus 0, by an easy exercise with the Riemann-Hurwitz formula. 

In the other direction, a braid of the given form arises as the monodromy of an algebraic curve in $D^2\times D^2(R)$ as above, having a simple double point for each positive node $\beta_j$, as follows from Orevkov's generalization \cite{orevkov:realize} of Rudolph's construction (see also \cite[Theorem 73]{rudolph:handbook}). Granted that $\hat\beta$ is a knot, such a curve must have genus 0 and $k - n+1$ double points.
\end{proof}

The conditions in Theorem \ref{qpslicethm} and the preceding results allow for straightforward determination for many knots in the knot table whether or not there is a Legendrian representative admitting a fillable positive surgery. Indeed, by Corollary \ref{c*g*refined} we need consider only quasipositive knots, and for these it is often straightforward to obtain a braid expression of the sort given in Theorem \ref{qpslicethm}. For example, the knot $10_{142}$ is listed in KnotInfo \cite{knotinfo} as quasipositive with the following braid expression as  a product of positive bands (in which the only band that includes a nontrivial conjugation is enclosed in parentheses):
\[
\beta = \underbrace{\sigma_1\sigma_1}\sigma_1(\sigma_3\sigma_2\sigma_3^{-1})\underbrace{\sigma_1\sigma_1}\sigma_1\sigma_2\sigma_3\in \Br_4.
\]
To show $10_{142}$ admits a fillable positive surgery we must arrange this expression into a product of 3 positive bands, and the remaining terms must pair into positive nodes (conjugates of squares of generators). Two such nodes are indicated by the braces above, but this leaves 5 bands (the four remaining unconjugated generators and the band in parentheses) rather than 3. Instead, one can replace the occurrence of $\sigma_2$ in the parentheses by $\sigma_1\sigma_2\sigma_1\sigma_2^{-1}\sigma_1^{-1}$, and after a minor manipulation we obtain
\[
\beta = \underbrace{\sigma_1\sigma_1}\underbrace{\sigma_1\sigma_1}(\sigma_3\sigma_2\sigma_1\sigma_2^{-1}\sigma_3^{-1})\underbrace{\sigma_1\sigma_1}\sigma_2\sigma_3,
\]
containing only 3 bands and the remaining terms paired into nodes. Similar arguments, together with the conditions of quasipositivity and that $c_*(K) = g_*(K)$ from Corollary \ref{c*g*refined}, lead quickly to the determination for all knots of 10 or fewer crossings whether a fillable positive contact surgery exists; see Table \ref{table:lowcross} in Section \ref{tablesec}.

\begin{example} Similar direct arguments can be used to show that any positive twist knot admits a fillable contact surgery. Recall that the positive twist knot $K_{\ell}$ is described by the diagram on the left of Figure \ref{twistfig}, which can be rearranged into the 2-bridge position shown on the right of that figure. Note that $\ell$ is odd; we write $\ell = 2k+1$. A standard braiding algorithm, for example that described in \cite[Section 4]{baader05}, yields a description of $K_\ell$ as the closure of a quasipositive braid (indeed, strongly quasipositive) that includes a positive node term corresponding to the clasp in the original diagram.  Since the slice genus of $K_{\ell}$ is 1 the corresponding nodal symplectic surface must have genus 0, and we infer the existence of a fillable surgery by Theorem \ref{fillsurgS3thm}.
\begin{figure}[t]
\begin{center}
 \includegraphics[width=10cm]{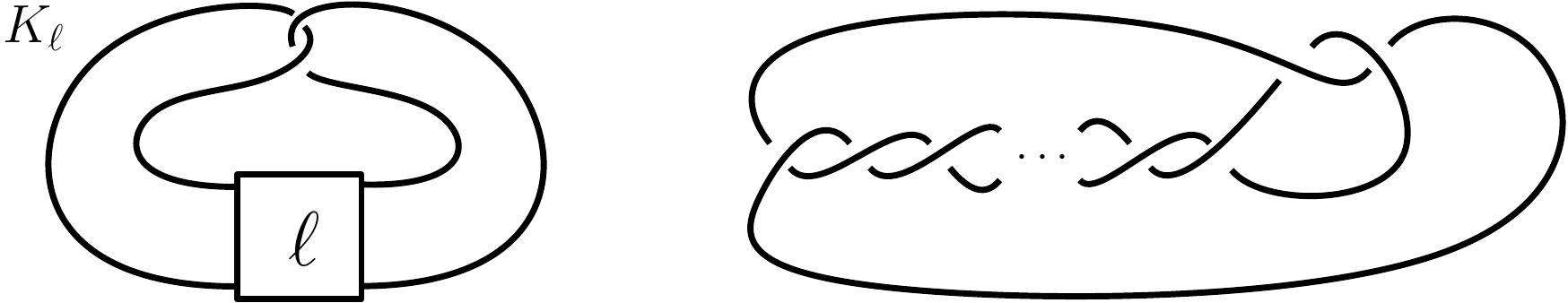}
 \caption{On the left is the positive twist knot $K_{\ell}$ where $\ell=2k+1$. On the right is $K_{\ell}$ in 2-bridge position where the lower twist region has $\ell$ half twists.}
  \label{twistfig}
\end{center}
\end{figure}

Explicitly, for indices $1\leq i\leq j\leq n-1$ we let $\sigma(i,j)\in \Br_n$ be the positive band given by the word
\[
\sigma(i,j) = \sigma_i \sigma_{i+1}\cdots \sigma_{j-1} \sigma_j \sigma_{j-1}^{-1} \cdots \sigma_{i+1}^{-1} \sigma_i^{-1}
\]
(such an expression is sometimes called an embedded positive band). Then one can show that $K_\ell$ is isotopic to the closure of the braid
\[
\beta = \sigma(k, 2k+1)\, \sigma(1, 2k+1)^2\,\prod_{i = 1}^k \sigma(i, k+i)\sigma(i+1, k+i) \in \Br_{2k+2}
\]
This expression contains $2k+1$ positive bands and one positive node, showing that $K_\ell$ bounds a symplectic disk with a single positive double point.
\end{example}

\subsection{Constructions}\label{constructionsec}

In this subsection we prove Corollary \ref{existencecor}, which shows that knots in certain families admit fillable positive surgeries. 

\begin{proof}[Proof of Corollary \ref{existencecor}, part (a)] We suppose that $K$ is a transverse knot in $(S^3,\xi_{std})$ given as the closure of a positive braid (that is, a braid expressed as a word in only positive powers of the standard generators of the braid group). Recall, for example from \cite[Lemma 2.4]{etnyregolla}, that an isotopy of transverse knots sweeps out a symplectic surface in the symplectization of the surrounding contact manifold; if the isotopy condition is relaxed to allow transverse self-intersections at isolated parameter values then the swept-out surface is symplectically immersed. Compared to \cite{etnyregolla} we run things backwards: we consider the ``negative half'' of the symplectization of $(S^3, \xi_{std})$ lying inside $B^4$, so increasing the time parameter in the isotopy corresponds to decreasing the radial coordinate.

In \cite[Section 4]{boilweb}, Boileau and Weber give an unknotting algorithm for the closure of any braid $\beta$ that requires at most  ${1\over 2}(\ell(\beta) - n + r)$ crossing changes, where $\ell(\beta)$ is the word length of $\beta$, $n$ is the braid index, and $r$ the number of components of the closure $\hat{\beta}$. In particular, each crossing change is made in the diagram of $\beta$ coming from its expression in braid generators. Applied to a positive braid whose closure is a knot, the algorithm provides a sequence of crossing changes, necessarily from positive to negative, from $\beta$ to a braid $\varepsilon$ that, after a braid isotopy, contains exactly $n-1$ (positive) crossings and whose closure is unknotted. It is easy to see that the closure $\hat\varepsilon$ of such a braid bounds a smooth symplectic disk in $B^4$. (For example, the self-linking number of $\hat{\varepsilon}$ is the writhe, or algebraic word length, minus the braid index, in this case $-1$. Since $\hat{\varepsilon}$ is unknotted and the unknot is transversely simple, $\hat\varepsilon$ is transversely isotopic to a standard unknot. Alternatively, one can use the results cited in the proof of Theorem \ref{qpslicethm} to construct a smooth symplectic disk bounded by $\hat\varepsilon$ without involving any braid (de)stabilizations.) This disk together with the immersed annulus corresponding to the sequence of crossing changes gives an immersed symplectic disk in $B^4$ with boundary  $\hat\beta$; all self-intersections are positive since all crossings changed from positive to negative. Hence $K$ has a fillable positive surgery by Theorem \ref{fillsurgS3thm}.
\end{proof}

\begin{proof}[Proof of Corollary \ref{existencecor}, part (b)] It is well known that every tight contact structure on a lens space is symplectically fillable \cite{Honda:classification1}. Hence, given a lens space knot $K\subset S^3$ (which is to say a smooth knot such that $S^3_m(K)$ is a lens space for some $m>0$), it suffices to show that there is a Legendrian representative of $K$ so that the contact surgery corresponding to $m$ has positive contact surgery coefficient and is tight. Now, a lens space knot is necessarily strongly quasipositive (see \cite{hedden:positivity, ni:fibered, OS:lens}, or \cite{baldwinsivek:sqp} for a self-contained proof). In particular $K$ is isotopic to the boundary of an embedded symplectic surface $S$ in $B^4$, and is thus can be realized as a transverse knot in the standard contact structure of $S^3$ such that the self-linking number $s\ell(K)$ is equal to $2g(S) -1$, and $g(S) = g_*(K)$ (which equals $g(K)$ by strong quasipositivity). A Legendrian approximation $\K$ of this transverse knot then has $\tb(\K) + |\rot(\K)| = 2g_*(K)-1$. If $\K$ is oriented suitably, it then follows from \cite{MT:surgery, Golla} that   $\xi^-_n(\K)$ has nonvanishing Heegaard Floer contact class---and in particular is tight---whenever the smooth surgery coefficient $m = n + \tb(\K)$ is at least $2g_*(K)$. (Note that this conclusion requires the additional hypothesis that $\varepsilon(K)\geq 0$, where $\varepsilon$ is the concordance invariant defined by Hom \cite{hom:cables}. That this holds follows from \cite[Proposition 3.6(4)]{hom:cables}, using the fact that for strongly quasipositive knots, $\tau(K) = g_*(K) = g(K)$.) 

Hence for tightness we need to see that the surgery coefficient yielding a lens space (corresponds to a positive contact surgery and) is at least $2g_*(K)$. But by \cite[Theorem 1.1]{greene:Lspacesurg}, if surgery on $K$ with coefficient $m>0$ is a lens space then $m> 2g(K)-1$, which is also at least $\tb(\K)$. Thus the contact surgery giving a lens space has positive contact surgery coefficient and is tight, hence fillable. \end{proof}

For part (c) of Corollary \ref{existencecor}, we need a definition. Recall that a {\it pattern} (for a satellite operation) is a knot $P\subset S^1\times D^2$ in a solid torus. For a knot $K\subset S^3$, the satellite knot $P(K)$ is the (isotopy class of the) image of $P$ under an embedding $S^1\times D^2\to S^3$ that maps $S^1\times\{0\}$ onto $K$, and a pushoff $S^1\times z$ to a $0$-framed longitude of $K$. More generally, the twisted satellite $P_\ell(K)$ is obtained by a similar construction with the condition that $S^1 \times z$ is identified with an $\ell$-framed longitude, or equivalently links $K$ $\ell$ times. 

\begin{definition}\label{braidpatterndef} Consider $S^3$ as the unit sphere in $\cee^2$, and fix a smooth identification $S^1 \times D^2\to S^3 \cap \{|z_2|\leq {1\over 2}\}$ as a 0-framed neighborhood of the unknot in $S^3$. A {\em braided fillable pattern} is a pattern knot $P\subset S^1\times D^2$, subject to the following conditions:
\begin{itemize}
\item Under the identification above, $P$ is a transverse knot in $S^1\times D^2\subset S^3$.
\item There exists an immersed symplectic disk $\Delta_P \subset \{|z_2|\leq {1\over 2}\}\subset B^4$ with only positive double point singularities, with $\partial \Delta_P = P$.
\end{itemize}
\end{definition}

Clearly a braided fillable pattern, considered as a knot in $S^3$, is presented as a braid closure: that is, $P(U) = \hat\beta$ for some braid $\beta$, where $U$ is the unknot. Moreover, $\beta$ can be expressed as a product of positive bands and positive nodes, satisfying the condition in Theorem \ref{qpslicethm}. On the other hand if $\beta$ is expressed as in Theorem \ref{qpslicethm}, then as in the proof of that theorem, $P(U)$ is identified with the boundary of a nodal algebraic curve in $D^2\times D^2(R)$. Rescaling the second coordinate if necessary, we can find an algebraic, hence symplectic, disk as specified in the definition above. Part (c) of Corollary \ref{existencecor} is equivalent to the following.

\begin{proposition} Let $\C$ be a Legendrian in $S^3$ that admits a fillable positive contact surgery, whose corresponding smooth surgery coefficient is $m$. Let $C$ be the smooth knot underlying $\C$, and let $P$ be a braided fillable pattern. Then $K = P_m(C)$ admits a Legendrian representative having a fillable positive contact surgery. 
\end{proposition}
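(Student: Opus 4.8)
The plan is to produce, inside a symplectic filling of $S^3$, an immersed symplectic disk with only positive double points bounded by (a transverse representative of) $K = P_m(C)$, and then invoke Theorem \ref{fillsurgS3thm}. The strategy is to glue two symplectic pieces: a symplectic disk $\Delta_C$ bounded by $C$ living in a filling $Z$ of $S^3$, and the symplectic disk $\Delta_P$ in the solid-torus model guaranteed by Definition \ref{braidpatterndef}. First I would use Theorem \ref{mainthm} (in the form of Proposition \ref{blowupfillprop}) applied to $\C$: since $\C$ admits a fillable positive contact surgery with smooth coefficient $m$, a transverse pushoff of $\C$ bounds an embedded symplectic disk $\Delta_C$ in some blowup $Z \cong B^4 \# k\cptwobar$ of the $4$-ball, and by Lemma \ref{fillsurgcoefflem} this disk has self-intersection $\Delta_C\cdot\Delta_C = -m$ (with respect to the Seifert framing).

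Next I would build the satellite geometrically. A tubular neighborhood $\nu(\Delta_C)\subset Z$ is symplectomorphic to a disk bundle of Euler number $-m$ over the disk $\Delta_C$; restricting to the boundary $S^3 = \partial Z$ gives a solid-torus neighborhood $\nu(C)\subset S^3$ whose framing—because $\Delta_C\cdot\Delta_C = -m$—is precisely the $m$-framing. This is exactly the framing data built into the definition of the twisted satellite $P_m(C)$. The key step is then to insert the pattern: identify the standard solid torus $S^1\times D^2$ carrying $P$ with $\nu(C)$ via this $m$-framed identification, so that the transverse pattern knot $P\subset S^1\times D^2$ becomes a transverse representative of $K = P_m(C)$ sitting in $\partial Z = S^3$. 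The symplectic disk $\Delta_P$ from Definition \ref{braidpatterndef}, living in the $4$-dimensional model $\{|z_2|\le \tfrac12\}\subset B^4$, can be transplanted into a collar/neighborhood of $\Delta_C$ inside $Z$. Intuitively, one replaces a trivial braided core of $\nu(\Delta_C)$ by the braided disk $\Delta_P$; since $\Delta_P$ is symplectic with only positive double points, the resulting surface $\Delta\subset Z$ is an immersed symplectic disk with only positive double points and with $\partial\Delta = K$.

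The main obstacle I anticipate is the symplectic compatibility of the gluing: the disk $\Delta_P$ is defined in the flat local model $S^3\cap\{|z_2|\le\tfrac12\}$ with its standard symplectic structure, whereas it must be inserted into the neighborhood $\nu(\Delta_C)\subset(Z,\omega)$, whose symplectic structure is only fibrewise standard up to the twisting recorded by the Euler number $-m$. I would handle this by working in a Weinstein-type normal form for $\nu(\Delta_C)$: a neighborhood of a symplectic surface of self-intersection $-m$ is symplectomorphic to a standard disk-bundle model, and the correspondence of framings ($-\Delta_C\cdot\Delta_C = m$ matches the satellite twisting parameter $m$) is exactly what makes the pattern $P$, braided with respect to the page framing of the model, glue to a genuinely transverse $P_m(C)$ with the braided disk $\Delta_P$ fitting symplectically over $\Delta_C$. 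One must check that the $n$ strands of the braid $\beta$ defining $P$ trace out, over $\Delta_C$, an $n$-fold branched cover agreeing near the boundary with the model in Definition \ref{braidpatterndef}; the $m$-twisting is absorbed precisely because the ambient disk bundle has Euler number $-m$.

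Finally, I would verify that the result has the right type. The surface $\Delta$ is symplectic and immersed with only positive double points (those of $\Delta_P$, as the embedded $\Delta_C$ contributes none and the gluing introduces none), and its boundary is the transverse pushoff of a Legendrian approximation $\K$ of $K = P_m(C)$. Applying Theorem \ref{fillsurgS3thm} (via Remark \ref{doubleptrem}) then yields that $\K$ admits a fillable positive contact surgery, completing the proof. A remark worth including is that the number of positive double points of $\Delta$ equals that of $\Delta_P$, which by the Riemann–Hurwitz count in Theorem \ref{qpslicethm} is $g_*(P(U))$; combined with the genus of $\Delta_C$ this is consistent with the expected slice genus of the satellite, giving an internal consistency check that $g_*(K) = c_*(K)$ as demanded by Corollary \ref{c*g*refined}.
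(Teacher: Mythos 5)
Your proposal follows essentially the same route as the paper: obtain the embedded symplectic disk $\Delta_C$ with $-\Delta_C\cdot\Delta_C = m$ in a blowup of $B^4$, implant the pattern disk $\Delta_P$ into a standard symplectic neighborhood of $\Delta_C$, observe that this neighborhood's product framing differs from the Seifert framing by exactly $m$ so that the boundary of the implanted disk is $P_m(C)$, and conclude via the fillable-surgery criterion. One small imprecision to fix: the normal bundle of the disk $\Delta_C$ is trivial, so its neighborhood is symplectomorphic to the standard product $D^2\times D^2(\epsilon)$ (after normalizing the area of $\Delta_C$) rather than to a disk bundle ``of Euler number $-m$''; the integer $m$ enters only as the discrepancy between that product framing and the $0$-framing of $\partial\Delta_C$, which is exactly how the paper identifies the twisting parameter, so there is no extra ``twisting'' to absorb in the gluing.
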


\begin{proof} We wish to show that $K$ bounds an embedded symplectic disk in some blowup of $B^4$. By hypothesis, $C$ can be described as the boundary of such a disk $\Delta_C\subset B^4\# k\cptwobar$, such that $-\Delta_C\cdot \Delta_C = m$. Let $\Delta_P\subset B^4$ be a nodal algebraic disk lying in $\{|z_2|\leq {1\over 2}\}$, whose intersection with $S^3$ is the pattern $P$. By further rescaling $z_2$ as above, we can suppose that $\Delta_P$ lies in $\{|z_2| < \epsilon\}$ for any chosen $\epsilon>0$; again rescaling the second coordinate preserves the fact that the immersed disk is holomorphic and hence it also remains symplectic.

For convenience, we can rescale the symplectic form on $B^4\# k\cptwobar$ so that the disk $\Delta_C$ has area $\pi$. Then by standard neighborhood theorems in symplectic geometry, for small enough $\epsilon >0$ there is a symplectomorphism $F$ from $D^2 \times D^2(\epsilon)$ to a neighborhood of $\Delta_C$. Strictly, this may involve ``trimming'' a subset of $D^2\times D^2(\epsilon)$ contained in $\{1-\delta \leq z_1 \leq 1\}\times D^2(\epsilon)$, but this does not affect the argument. In particular, the image under $F$ of the disk $\Delta_P$ is a singular symplectic disk in $B^4 \# k\cptwobar$, having only positive nodes as singularities, whose boundary is clearly the satellite $P_\ell(C)$ for some $\ell$. Moreover, if we take $C$ to be positively transverse (i.e., a transverse pushoff of $\C$), then since $P$ is braided we can also choose neighborhoods small enough that the satellite $P_\ell(C)$ is transverse.

The disk $F(\Delta_P)$ can be smoothed by further blowups, so it follows from Proposition \ref{blowupfillprop} that (a Legendrian approximation of) $P_\ell(C)$ has a fillable positive contact surgery. It remains to observe that the framing $\ell$ is nothing but the difference between the $0$-framing of $C$ and the framing that extends over the disk $\Delta_C = F(D^2 \times 0)$; this is exactly $m$.
\end{proof}

\begin{proof}[Proof of Corollary \ref{existencecor}, part (d)] Recall that if $C\subset S^3$ is a knot, the $(p,q)$ cable $C_{p,q}$ is the satellite knot having companion $C$ and pattern $P = T(p,q)$ the $(p,q)$ torus knot, thought of as a closed braid in the standard solid torus in $S^3$. Here $p$ is the ``longitudinal'' coordinate, and we consider $p, q >0$. By realizing a positive torus knot as the closure of a positive braid, we have that $T(p,q)$ is a braided fillable pattern in the sense of Definition \ref{braidpatterndef}. 

Observe that for a framing coefficient $m$, the twisted cable $P_m(C)$ (with $P = T(p',q')$) is nothing but $C_{p', q' + mp'}$. Put another way, we have $C_{p,q} = P_m(C)$ where $P = T(p, q-mp)$. According to part (c) of Corollary \ref{existencecor}, we infer that $C_{p,q}$ admits a fillable positive surgery so long as $m$ is the smooth coefficient of a fillable surgery on $C$, and $T(p, q-mp)$ is braided fillable. The second of these simply requires $q-mp >0$, which is the condition assumed in the statement.
\end{proof}

With the results of Corollary \ref{existencecor} in hand it is worth pausing for a few additional remarks. As mentioned in the introduction, the fact that a knot that admits a lens space surgery also has a fillable positive contact surgery implies the purely topological consequence that for lens space knots $K$ one has $g_*(K) = c_*(K)$. We do not know another proof of this fact.

It is natural to wonder whether the same conclusion holds for general $L$-space knots (i.e., knots $K$ such that some positive surgery along $K$ yields a Heegaard Floer $L$-space), in particular whether $c_* = g_*$ and the other consequences of Corollary \ref{c*g*cor} hold for such knots. There are certain parallels between the results of Corollary \ref{existencecor} and known constructions of $L$-space knots: for example, it is a consequence of work of Hedden \cite{hedden:cabling2} and Hom \cite{hom:cabling} that a cabled knot $K_{p,q}$ is an $L$-space knot if and only if $K$ is an $L$-space knot and ${q\over p}\geq 2g(K)-1$. (Note that since $L$-space knots are strongly quasipositive, we have $g(K) = g_*(K)$ for such knots.) From Corollary \ref{existencecor} we find that if $K$ admits a fillable positive surgery, then $K_{p,q}$ does as well, at least for ${q\over p}$ large enough; by Proposition \ref{muboundprop} (proved in the next section), it suffices that ${q\over p} \geq 4g_*(K)$. Of course one wonders about the values of $q\over p$ between $2g_*-1$ and $4g_*$. 

Likewise, there are more general satellite constructions of $L$-space knots \cite{HLV2014,hom:satellite}, requiring conditions on the pattern and companion knots as well as on the twisting parameter, analogous to the conditions in Corollary \ref{existencecor}(c) but somewhat less stringent; in particular it is not hard to construct $L$-space knots with these techniques, for which Corollary \ref{existencecor} does not guarantee a fillable positive surgery (see below).

On the other hand, the sufficient condition on the cabling parameter required to obtain a cabled knot with fillable surgery, stated in Corollary \ref{existencecor}(d), is certainly not necessary in general. For example, if one considers cables of the right-handed trefoil knot $T(2,3)$, it it easy to see that the cable $T(2,3)_{2,7}$ is isotopic to the closure of a positive braid, hence admits a fillable positive surgery. In this case the parameter ${q\over p}= {7\over 2}$ is less than the minimum smooth coefficient yielding a fillable surgery on $T(2,3)$ itself, which is 4.

The results of Corollary \ref{existencecor} combined with those of \cite{hedden:cabling2} provide many examples of $L$-space knots (that are not lens space knots) that admit a fillable positive surgery. It is natural to ask whether {\it every} $L$-space knot admits a fillable positive surgery; as a concrete instance we do not currently know whether the cable knot $T(2,3)_{2,3}$, which is an $L$-space knot but neither a lens space knot \cite{gordon:satellite} nor a positive braid closure \cite[Remark 4]{ito:homfly}, \cite[Example 1]{bakerexp}, admits a fillable positive surgery.
 
\subsection{Fillable surgery coefficients}\label{surgcoeffsec}

We now return to the question of which contact surgery on a given Legendrian $\K$ in the standard 3-sphere will yield a fillable contact structure. According to Proposition \ref{blowupfillprop} and Lemma \ref{fillsurgcoefflem} such a surgery exists if and only if the transverse pushoff $K$ bounds an embedded symplectic disk $\Delta$ in $Z = B^4\# k\cptwobar$, and in this case the fillable surgery corresponds to smooth surgery coefficient $r = -\Delta\cdot \Delta$. Here since $\partial Z = S^3$ the surgery coefficient and self-intersection do not depend on a choice of Seifert surface for $K$, so we make no further mention of that choice and in fact identify the homology class $[\Delta]$ with an absolute homology class via the natural isomorphism $H_2(Z)\to H_2(Z, \partial Z)$. With this understood, we can write 
\begin{equation}\label{diskclasseqn}
[\Delta] = \sum_{j=1}^k n_j e_j
\end{equation}
for integers $\{n_j\}$,where $e_1,\ldots, e_k$ are the homology classes of the hyperplanes in each copy of $\cptwobar$. In this case, since $e_i \cdot e_j = -\delta_{ij}$, we see that $\Delta$ determines a fillable contact surgery on $\K$ whose corresponding smooth coefficient is
\begin{equation}\label{sumsquareseq}
r = -\Delta\cdot\Delta = \sum_j n_j^2.
\end{equation}

Recall that we can arrange that each class $e_j$ is represented by a smooth symplectic 2-sphere $E_j$ having self-intersection $-1$, such that each point of intersection between $\Delta$ and $E_j$ contributes positively to $\Delta\cdot e_j$. Hence in the expression \eqref{diskclasseqn},  we have that $n_j = 1$ if and only if $\Delta$ intersects $E_j$ transversely in a single point. In particular, in this case we can blow down $E_j$ and the image of $\Delta$ after blowing down is still an embedded symplectic disk with boundary $K$, now in $B^4\#(k-1)\cptwobar$. Moreover, the topological surgery coefficient $-\Delta\cdot\Delta$ decreases by 1 after this operation. (Of course, $n_j = 0$ if and only if $\Delta$ is disjoint from $E_j$, and we can blow down $E_j$ without affecting the surgery.) Conversely, by blowing up a smooth point of $\Delta$ we decrease the self-intersection by 1 and correspondingly increase the smooth coefficient of a fillable surgery by 1.

One consequence of these remarks is the following result concerning the surgery coefficient of a fillable surgery.

\begin{proposition}\label{4gprop} If $\K$ is a Legendrian knot in $S^3$ such that some positive contact surgery is fillable, then $\xi^-_n(\K)$ is weakly fillable for every $n$ corresponding to a smooth surgery coefficient greater than or equal to $4g_*(K)$, where $g_*(K)$ is the slice genus of the underlying smooth knot $K$.
\end{proposition}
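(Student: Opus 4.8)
The plan is to produce, for each integer $r \geq 4g_*(K)$, an embedded symplectic disk bounded by the transverse pushoff $K$ in some blowup of $B^4$ whose self-intersection equals $-r$; by Proposition \ref{blowupfillprop} and Lemma \ref{fillsurgcoefflem} such a disk yields a fillable positive contact surgery on $\K$ with smooth coefficient $r$, which is exactly what the statement asserts. Since $\xi^-_n(\K)$ is fillable for some $n$ by hypothesis, Corollary \ref{c*g*refined} together with Remark \ref{doubleptrem} provides a symplectically immersed disk $\Delta_0 \subset B^4$ with $\partial \Delta_0 = K$ whose only singularities are exactly $g_*(K)$ positive transverse double points. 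This immersed disk is the geometric input for the whole argument.

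First I would resolve the nodes by blowing up. Performing a symplectic blowup at each of the $g_*(K)$ double points of $\Delta_0$ separates the two local branches at each node, so the proper transform $\tilde\Delta$ is an embedded symplectic disk in $Z = B^4 \# g_*(K)\,\cptwobar$ with $\partial\tilde\Delta = K$. Modeling each positive double point on the holomorphic node $\{z_1 z_2 = 0\}$, as in the construction underlying Theorem \ref{fillsurgS3thm}, the blowup is symplectic and $\tilde\Delta$ remains symplectic, meeting the exceptional sphere $E_j$ created at the $j$-th node transversely in exactly two points (one per branch) and being disjoint from the other exceptional spheres. Writing $[\tilde\Delta] = \sum_{j=1}^{g_*(K)} n_j e_j$ as in equation \eqref{diskclasseqn}, the two positive intersection points with each $E_j$ force $|n_j| = 2$, so by \eqref{sumsquareseq} the resulting fillable surgery has smooth coefficient $-\tilde\Delta\cdot\tilde\Delta = \sum_j n_j^2 = 4g_*(K)$.

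Finally I would realize all larger coefficients by blowing up smooth points. As recorded in the discussion preceding the statement, blowing up a generic point of an embedded symplectic disk decreases its self-intersection by $1$ and raises the corresponding fillable smooth coefficient by $1$. Applying this repeatedly to $\tilde\Delta$ produces, for every integer $r \geq 4g_*(K)$, an embedded symplectic disk in a suitable blowup $B^4 \# k\,\cptwobar$ with self-intersection $-r$, hence a fillable $\xi^-_n(\K)$ whose smooth coefficient is $r$. The step I expect to require the most care is the second one: verifying that the symplectic resolution of a positive holomorphic node yields an embedded symplectic disk meeting the new exceptional sphere in exactly two points (so that each node contributes precisely $2^2 = 4$ to the self-intersection), and that one can do this simultaneously at all nodes while keeping the global symplectic structure and the boundary transverse knot $K$ fixed. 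This local-to-global input is exactly what underlies Theorem \ref{fillsurgS3thm} and Corollary \ref{c*g*refined}, so it can be imported from there, but it is the geometric heart of the estimate.
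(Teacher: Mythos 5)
Your proposal is correct and follows essentially the same route as the paper: obtain a symplectically immersed disk in $B^4$ with exactly $g_*(K)$ positive double points, blow up each node so the proper transform is embedded with class $\sum_j 2e_j$ and self-intersection $-4g_*(K)$, then blow up smooth points to realize every larger coefficient. The only cosmetic difference is that you cite Corollary \ref{c*g*refined} and Remark \ref{doubleptrem} explicitly where the paper refers to ``the discussion surrounding Theorem \ref{fillsurgS3thm},'' which amounts to the same input.
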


\begin{proof} As in the discussion surrounding Theorem \ref{fillsurgS3thm}, we can find a symplectic disk immersed in $B^4$ having boundary a pushoff $K$ of $\K$, only positive transverse double points as singularities, and exactly $g_*(K)$ such double points. By blowing up each double point we find an embedded symplectic disk $\Delta$ in the $g_*(K)$-fold blowup  of $B^4$, such that 
\[
[\Delta] = -\sum_{j=1}^{g_*(K)} 2e_j.
\]
In particular we have $\Delta\cdot\Delta = -4g_*(K)$, so that contact surgery on $\K$ with smooth coefficient $4g_*(K)$ is fillable.  It follows from the remarks preceding the statement of the proposition that every contact surgery with greater coefficient is also fillable.\end{proof}

In fact, if a disk $\Delta\subset B^4\# k\cptwobar$ is found, with transverse boundary $K$ and homology class as in \eqref{diskclasseqn}, the coefficients $n_j$ are related to the slice genus $g_*(K)$ as follows. Arrange as above that each intersection of $\Delta$ with an exceptional sphere is transverse, so that blowing down results in a disk having $k$ multiple points of orders $n_1,\ldots, n_k$. An ordinary multiple point of order $n_j$ can be perturbed symplectically to a collection of ${1\over 2}n_j(n_j-1)$ ordinary double points, which can each then be smoothed to give an embedded symplectic surface in $B^4$. Since this surface must realize the slice genus of $K$, we have:

\begin{proposition}\label{genusformulaprop} If a transverse knot $K$ bounds an embedded symplectic disk $\Delta\subset B^4\# k\cptwobar$ with homology class as in \eqref{diskclasseqn}, then
\[
2g_*(K) = \sum_{j=1}^k n_j (n_j-1).
\]
In particular, the complement of a neighborhood of $\Delta$ gives a symplectic filling of the result of a positive contact surgery on a Legendrian approximation of $K$, whose smooth surgery coefficient $r$ is given by
\[
r = 2g_*(K) + \sum_{j=1}^k n_j.
\]
\end{proposition}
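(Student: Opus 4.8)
The plan is to extract both formulas from the blow-down/smoothing procedure described in the paragraph immediately preceding the statement, using only the Riemann–Hurwitz-style bookkeeping already in play. First I would fix the symplectic disk $\Delta\subset B^4\#k\cptwobar$ with $[\Delta]=\sum_j n_j e_j$ and positively transverse boundary $K$, and arrange (by a small symplectic perturbation, as in the proof of Corollary \ref{c*g*refined}) that $\Delta$ meets each exceptional sphere $E_j$ transversely. Since each geometric intersection with $E_j$ contributes positively to $\Delta\cdot e_j = n_j$, transversality forces $\Delta$ to meet $E_j$ in exactly $n_j$ points. Blowing down all $k$ exceptional spheres then produces a singular symplectic disk in $B^4$ whose only singularities are ordinary multiple points, one of order $n_j$ at the image of each $E_j$.

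Next I would perform the local desingularization at each multiple point. An ordinary multiple point of order $n_j$ (the transverse intersection of $n_j$ local sheets) can be perturbed symplectically to a collection of $\binom{n_j}{2}=\tfrac12 n_j(n_j-1)$ ordinary transverse double points, all positive because the singularity is modeled on a holomorphic one. Smoothing each such double point replaces it by an annulus, raising the genus by one while keeping the surface symplectically embedded and fixing the boundary $K$. Carrying this out at every multiple point yields an embedded symplectic surface $S\subset B^4$ with $\partial S=K$, whose genus is the total number of double points,
\[
g(S)=\sum_{j=1}^k \tfrac12 n_j(n_j-1).
\]
By Boileau–Orevkov and Rudolph (as invoked for Corollary \ref{c*g*refined}), a symplectic surface in $B^4$ with boundary $K$ realizes the slice genus, so $g(S)=g_*(K)$, giving the first displayed formula $2g_*(K)=\sum_j n_j(n_j-1)$.

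For the surgery-coefficient formula I would combine this with the self-intersection computation already recorded in \eqref{sumsquareseq}. By Proposition \ref{blowupfillprop} together with Lemma \ref{fillsurgcoefflem}, the complement of a neighborhood of $\Delta$ is a symplectic filling of a positive contact surgery on a Legendrian approximation of $K$, whose smooth coefficient is $r=-\Delta\cdot\Delta=\sum_j n_j^2$. Writing $n_j^2 = n_j(n_j-1)+n_j$ and summing gives
\[
r=\sum_{j=1}^k n_j^2=\sum_{j=1}^k n_j(n_j-1)+\sum_{j=1}^k n_j = 2g_*(K)+\sum_{j=1}^k n_j,
\]
as claimed. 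The one point requiring care—and the only place where any genuine geometric content enters beyond the algebraic identity $n^2=n(n-1)+n$—is the identification of the smoothed surface's genus with $g_*(K)$; this is exactly the step where one must cite that a symplectic spanning surface achieves the slice genus, and it is the analogue of the main obstacle already handled in Corollary \ref{c*g*refined}. Everything else is the local model for smoothing an ordinary multiple point and a count of double points, which is routine.
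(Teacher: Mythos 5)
Your proposal is correct and follows essentially the same route as the paper: perturb $\Delta$ to meet the exceptional spheres transversely, blow down to obtain a disk with ordinary multiple points of orders $n_j$, perturb each to $\tfrac12 n_j(n_j-1)$ positive double points, smooth to an embedded symplectic surface realizing $g_*(K)$, and then combine with $r=-\Delta\cdot\Delta=\sum_j n_j^2$ via $n_j^2=n_j(n_j-1)+n_j$. No substantive differences from the argument given in the paper.
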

The last equation follows from \eqref{sumsquareseq}. Since all $n_j >0$ we infer the following, which together with Proposition \ref{4gprop} completes the proof of Proposition \ref{muboundprop}.

\begin{corollary} Any fillable contact surgery on a Legendrian knot $\K$ with underlying smooth knot $K$ has smooth surgery coefficient $r$ satisfying $r\geq 2g_*(K)$. The inequality is strict unless $K$ is isotopic to the transverse boundary of an embedded symplectic disk in $B^4$, in particular unless $g_*(K) = 0$.
\end{corollary}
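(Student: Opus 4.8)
The plan is to read off both the inequality and the equality case directly from the genus formula of Proposition \ref{genusformulaprop}, so the bulk of the analytic work has already been carried out. First I would fix the geometric object: by Proposition \ref{blowupfillprop} together with Lemma \ref{fillsurgcoefflem}, any fillable positive contact surgery on $\K$ is realized by an embedded symplectic disk $\Delta\subset B^4\# k\cptwobar$ whose boundary is a transverse pushoff of $\K$, with smooth surgery coefficient $r=-\Delta\cdot\Delta$. Writing $[\Delta]=\sum_{j=1}^k n_j e_j$ as in \eqref{diskclasseqn} and blowing down each exceptional sphere disjoint from $\Delta$ (which, by the discussion preceding Proposition \ref{4gprop}, alters neither the boundary knot nor the surgery coefficient), I may assume that either $k=0$, so that $\Delta$ lies in $B^4$ itself, or that every remaining intersection multiplicity satisfies $n_j\geq 1$. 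In all cases $n_j\geq 0$.

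The inequality is then immediate. Proposition \ref{genusformulaprop} gives
\[
r=2g_*(K)+\sum_{j=1}^k n_j,
\]
and since every $n_j\geq 0$ we conclude $r\geq 2g_*(K)$.

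For the equality statement, the key point is that $r=2g_*(K)$ forces $\sum_j n_j=0$, hence (after the reduction above) that $k=0$ and $\Delta$ is an embedded symplectic disk lying in $B^4$ with boundary the transverse pushoff $K$. Conversely, such a disk realizes surgery coefficient exactly $2g_*(K)$. I would then observe that an embedded symplectic disk in $B^4$ is in particular a smoothly embedded genus-$0$ surface bounding $K$, which therefore realizes the slice genus; this forces $g_*(K)=0$. Contrapositively, whenever $g_*(K)>0$ the inequality must be strict, as claimed.

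I expect the only genuinely delicate point to be the bookkeeping in the equality case: one must check that having all $n_j=0$ is truly equivalent to $\Delta$ being isotopic into $B^4$, rather than merely homologically trivial in the blowup, and that the blow-down reduction preserves both $\partial\Delta$ and $r$. Since the positivity of the multiplicities $n_j$ is already encoded in Proposition \ref{genusformulaprop} through positivity of intersections of symplectic curves, no further analytic input is needed, and the argument reduces to this combinatorial and homological verification.
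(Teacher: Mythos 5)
Your proposal is correct and follows essentially the same route as the paper: the paper deduces the corollary directly from the formula $r = 2g_*(K) + \sum_j n_j$ of Proposition \ref{genusformulaprop} together with the positivity of the multiplicities $n_j$ (after blowing down exceptional spheres disjoint from $\Delta$), with the equality case reducing to an embedded symplectic disk in $B^4$, which is genus-minimizing and hence forces $g_*(K)=0$. The "delicate point" you flag is already settled by the blow-down discussion preceding Proposition \ref{4gprop}, so no further verification is needed.
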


In a related vein, we have the following simple observation.

\begin{proposition}\label{fillprop} Let  $\K$ be a Legendrian in $S^3$, and suppose $n>0$ is an integer such that
\begin{itemize}
\item The contact structure $\xi_n^-(\K)$ is weakly fillable.
\item The smooth surgery coefficient $r = n+\tb(\K)$ satisfies
\[
r\in\{1,2 , 3,5, 6, 7, 10, 11,14, 15, 19\}.
\]
\end{itemize}
Then $\xi^-_{n-1}(\K)$ is also weakly fillable.
\end{proposition}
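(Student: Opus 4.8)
The plan is to reduce the statement to an elementary fact about representing integers as sums of squares. Since $\xi_n^-(\K)$ is weakly fillable with $r = n + \tb(\K)$, Proposition \ref{blowupfillprop} together with Lemma \ref{fillsurgcoefflem} provides an embedded symplectic disk $\Delta$ in a blowup $Z = B^4 \# k\cptwobar$ whose boundary is the positive transverse pushoff $K$ of $\K$ and whose self-intersection is $-\Delta\cdot\Delta = r$. Writing $[\Delta] = \sum_{j=1}^k n_j e_j$ as in \eqref{diskclasseqn}, we have $r = \sum_j n_j^2$ by \eqref{sumsquareseq}; after discarding the exceptional spheres disjoint from $\Delta$, each remaining $E_j$ meets $\Delta$ in $|n_j| \geq 1$ points, all positively.

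The only nontrivial ingredient is the observation that every integer in the given list fails to be a sum of squares of integers all $\geq 2$. Concretely, the two smallest such squares are $4$ and $9$, and the listed values are precisely the positive integers $\leq 19$ that do not lie in the numerical semigroup $\langle 4, 9\rangle$; since $16 = 4+4+4+4$ contributes nothing new and $25, 36, \ldots$ exceed $19$, allowing the larger squares does not enlarge the set of representable values in this range. Hence the equation $\sum_j n_j^2 = r$ has no solution with $|n_j| \geq 2$ for all $j$, so at least one exceptional sphere $E_{j_0}$ must meet $\Delta$ transversely in a single point.

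Granted such an $E_{j_0}$, I would apply the symplectic blow-down discussed just before Proposition \ref{4gprop}: collapsing $E_{j_0}$ carries $\Delta$ to an embedded symplectic disk in $B^4 \# (k-1)\cptwobar$ with the same boundary $K$, raising its self-intersection by one to $-(r-1)$. By Proposition \ref{blowupfillprop}, with the surgery coefficient computed via Lemma \ref{fillsurgcoefflem}, the complement of a neighborhood of this disk weakly fills the contact structure obtained by positive contact surgery on a Legendrian approximation of $K$ with smooth coefficient $r-1$. Since the smooth and contact coefficients differ by the constant $\tb(\K)$ and the transverse type $K$ is unchanged, this contact structure is $\xi_{n-1}^-(\K)$ up to the equivalence of Remark \ref{techremark}, and it is therefore weakly fillable.

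I expect no genuine obstacle here: the argument is a one-step application of the blow-up/blow-down dictionary already established in this section, gated only by the arithmetic of $\langle 4, 9\rangle$. It is worth recording that the same reasoning applies verbatim to $r = 23$, the Frobenius number of $\langle 4, 9\rangle$ (one checks $23 - 16 = 7 \notin \langle 4, 9\rangle$ and $23 \notin \langle 4,9\rangle$), so the conclusion in fact holds for this value as well.
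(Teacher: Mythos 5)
Your argument is correct and is essentially the paper's own proof: the listed values of $r$ are exactly those that force some $n_j=1$ in the sum-of-squares expression \eqref{sumsquareseq}, whence one exceptional sphere meets $\Delta$ transversely in a single point and can be blown down to produce a filling of the next smaller integer surgery. Your added observation that $r=23$, the Frobenius number of $\langle 4,9\rangle$, satisfies the same arithmetic condition is also correct, so the paper's list could in fact be extended to include it.
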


\begin{proof} The significance of the list of allowed values of $r$ is that these are exactly the integers such that any expression of $r$ as a sum of squares as in \eqref{sumsquareseq} must have at least one of the  $n_j$ equal to 1. We have seen that in this situation, the corresponding embedded disk in the blown-up 4-ball intersects one of the exceptional spheres transversely in a single point, so we can blow down that sphere and obtain a filling of the next smaller integer surgery.
\end{proof}

It follows from this result, for example, that for a non-slice knot $K$, the smallest possible smooth surgery coefficient corresponding to a fillable contact surgery is $4$.

Proposition \ref{4gprop} shows that the smooth surgery coefficient $4g_*(K)$ is always large enough to realize a fillable contact surgery, if any such surgery exists; moreover this coefficient corresponds, in a sense, to a generically immersed symplectic disk in $B^4$ bounding $K$. It is an interesting problem to determine, for a given $K$, what is the smallest coefficient of a fillable surgery; values smaller than $4g_*(K)$ correspond to symplectic disks with more ``interesting'' (e.g., higher multiplicity) singularities than transverse double points. 

\begin{example}\label{pretzelex} Consider the pretzel knot $K = P(-2,3,7)$. It was observed by Fintushel and Stern \cite{FS:lens} that $18$-surgery along $K$ is a lens space, and therefore by Corollary \ref{existencecor}(b) a suitable Legendrian representative of $K$ has a corresponding fillable contact surgery. Since the slice genus of $K$ is 5, Proposition \ref{4gprop} implies that the contact surgery yielding smooth $20$-surgery is fillable; of course the lens space surgery with coefficient 18 is also fillable. We claim that in fact, the smallest fillable contact surgery on the chosen representative corresponds to smooth 17-surgery. The observation that facilitates this result is that since $K$ has slice genus 5, any properly embedded symplectic surface in $B^4$ having boundary $K$ is genus 5. 

As one approach to see that 17-surgery must be fillable, first note that the fillable 18-surgery corresponds to an embedded symplectic disk $\Delta\subset B^4 \# k\cptwobar$ with $-\Delta\cdot\Delta = 18$. Observe that the only expression of 18 as a sum of squares not including 1 is $3^2 + 3^2$. A symplectic disk $\Delta$ realizing this as in \eqref{diskclasseqn} and \eqref{sumsquareseq} will have $k = 2$ and $n_1 = n_2 = 3$, and it then would follow from Proposition \ref{genusformulaprop} that $g_*(K)= 6$, a contradiction. Hence $\Delta$ must have intersection number 1 with at least one exceptional curve, which as before implies that the next smaller surgery coefficient is fillable.

Now we claim that the contact surgery corresponding to smooth 16-surgery  on $K$ is {\it not} fillable. Indeed, suppose there is a disk $\Delta\subset B^4 \# k\cptwobar$ with $-\Delta\cdot\Delta = 16$, with homology class as in \eqref{diskclasseqn}. If some $n_j = 1$, then we can blow down and obtain a filling of smooth 15-surgery, which is negative definite by Corollary \ref{negdefcor}. But according to \cite[Theorem 1.3]{KT:nonfill}, the manifold obtained by 15-surgery on $P(-2,3,7)$ does not admit any fillable contact structures; in fact it does not bound any negative definite 4-manifold. Hence we must have that \eqref{sumsquareseq} expresses 16 as a sum of squares not including 1, which means either $k = 4$ with $n_1 =\cdots = n_4 = 2$, or $k = 1$ with $n_1 = 4$. In the first case, an application of Proposition \ref{genusformulaprop} forces $g_*(K)=4$, while the second case yields genus 6, neither of which are possible.

Note that the fillability of smooth 17-surgery can also be seen directly from the expression of $P(-2,3,7)$ as the closure of the positive braid $\sigma_1\sigma_2^2\sigma_1^2\sigma_2^7\in\Br_3$ as listed in KnotInfo. Indeed, after a conjugation, application of a braid relation, and introduction of the factor $\sigma_2\sigma_2^{-1}$, the closure of the given braid is equivalent to the closure of the quasipositive braid 
\[
(\sigma_1\sigma_2)^3(\sigma_2^{-1}\sigma_1\sigma_2)\sigma_2^2\sigma_2^2\sigma_2,
\]
which we can see as the braid monodromy of a singular symplectic disk having two nodes, along with a cusp singularity corresponding to the full twist $(\sigma_1\sigma_2)^3$. The latter becomes smooth after a single blowup of multiplicity three while the two nodes are each multiplicity two, yielding an embedded symplectic disk $\Delta\subset B^4 \# 3\cptwobar$ with $[\Delta] = -3e_1 -2 e_2 - 2 e_3$ and self-intersection $-17$.

\end{example}

We now consider the problem of determining the minimal coefficient for a fillable surgery in the case of a positive torus knot $T(p,q)$, realized as a transverse knot and the closure of a positive braid in the standard way. In our notation we will always take $p$ and $q$ relatively prime, and since $T(p,q)$ is transversely isotopic to $T(q,p)$, we will also assume that $p>q$.

To begin, recall from Corollary \ref{negdefcor} that for any knot $K\subset S^3$ admitting a fillable positive contact surgery corresponding to smooth surgery coefficient $n$ (which is also necessarily positive), a symplectic filling of the corresponding contact structure is negative definite: in particular the surgery manifold $S^3_n(K)$ is the boundary of some smooth, compact, negative definite 4-manifold. As mentioned in the introduction, Owens and Strle \cite{OwSt2012} define 
\[
m(K) = \inf\{ r\in \cue_{>0}\,|\, \mbox{$S^3_r(K)$ bounds a negative definite 4-manifold}\}.
\]
For torus knots they determine $m$ explicitly, in fact they show
\begin{equation}\label{torusknotm}
m(T(p,q)) = pq - c(p,q)
\end{equation}
for a certain rational number $c(p,q)$ depending on the continued fraction expansion of $p/q$. (Moreover, though $m(T(p,q))$ is defined as an infimum, Owens-Strle show that the manifold $S_{m(T(p,q))}(T(p,q))$ bounds a negative definite 4-manifold.) Here the ``Euclidean'' continued fraction expansion is ${p\over q} = [a_1,\ldots, a_n]^+$ if we have
\begin{equation}\label{contfrac}
{p\over q} = a_1 + {1 \over {a_2 + {1\over \cdots +{1\over a_{n-1} + {1\over a_n}}}}}.
\end{equation}
Any rational number ${p\over q}>1$ can be written in this form with $a_j\geq 1$ for all $j$, and the expression is unique if we require that $a_n\geq 2$ (we will always assume these conditions). With this in mind, the constant $c(p,q)$ in \eqref{torusknotm} is given by
\[
c(p,q) = \left\{ \begin{array}{ll}\ds\frac{q}{p^*} & \mbox{if $n$ is even in \eqref{contfrac}} \vspace{1ex}\\ \ds\frac{p}{q^*} & \mbox{if $n$ is odd.}\end{array}\right.
\]
Here $p^*\in \{1,\ldots, q-1\}$ and $q^*\in \{1,\ldots, p-1\}$ denote the inverses of $p$ and $q$ modulo $q$ and $p$, respectively.  The proof of \cite[Proposition 4.1]{OwSt2012} also shows that
\[
c(p,q) = [a_n, a_{n-1}, \ldots, a_m]^+
\]
where $m$ is 2 or 1 when $n$ is even or odd, respectively. This implies that the greatest integer less than or equal to $c(p,q)$ is  $a_n$, the last coefficient in the continued fraction for $p/q$. In particular the smallest positive {\it integer} surgery on $T(p,q)$ that bounds a smooth negative definite 4-manifold (symplectic or not) is 
\[
\lceil m(T(p,q)) \rceil = pq - \lfloor c(p,q)\rfloor = pq - a_n.
\]

\begin{theorem}\label{torusknotthm} Let $\T(p,q)$ be a Legendrian knot in the knot type of $T(p,q)$, having maximal Thurston-Bennequin invariant. Then $\T(p,q)$ admits a fillable positive contact surgery, and the smallest integral such surgery is the one corresponding to smooth surgery coefficient $\lceil m(T(p,q)) \rceil$.
\end{theorem}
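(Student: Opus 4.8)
The plan is to prove both assertions—existence of a fillable positive surgery and the identification of the minimal integral coefficient with $\lceil m(T(p,q))\rceil$—by establishing matching lower and upper bounds on the smooth surgery coefficient $r$ of a fillable surgery. I write $\mu = \mu(T(p,q))$ for the minimal such integer, and recall from the discussion preceding the statement that $\lceil m(T(p,q))\rceil = pq - a_N$, where $p/q = [a_1,\dots,a_N]^+$ is the Euclidean continued fraction ($a_N\geq 2$ is the last coefficient, denoted $a_n$ above). The lower bound $\mu \geq \lceil m(T(p,q))\rceil$ is formal: if some $\xi^-_n(\T(p,q))$ is weakly fillable with smooth coefficient $r$, then its filling $(Z',\omega')$ satisfies $b_2^+(Z')=0$ by Corollary \ref{negdefcor}, so $Z'$ is a negative definite $4$-manifold with $\partial Z' = S^3_r(T(p,q))$. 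By the definition of $m$ this forces $r\geq m(T(p,q))$, and since $r$ is an integer, $r\geq \lceil m(T(p,q))\rceil$.

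For the upper bound I would exhibit a single symplectic disk realizing $r = pq - a_N$. Realize the maximal self-linking transverse representative of $T(p,q)$—the transverse pushoff of the maximal-$\tb$ Legendrian $\T(p,q)$—as the link of the plane curve singularity $\{z^p = w^q\}$, so that $\Delta_0 = \{z^p=w^q\}\cap B^4$ is a holomorphic disk with a single cusp at the origin and $\partial\Delta_0 = T(p,q)$. This is already a singular symplectic disk with a locally holomorphic singularity, so Theorem \ref{fillsurgS3thm} produces a fillable positive surgery (this proves the first assertion; alternatively, invoke Corollary \ref{existencecor}(a)). To pin down the coefficient, I would resolve the cusp by the minimal sequence of interior blowups, which leaves the boundary unchanged and yields an embedded symplectic disk $\Delta\subset B^4\#k\cptwobar$ with $[\Delta]=\sum_j n_j e_j$, the $n_j$ being the multiplicities at the infinitely near points. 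By Lemma \ref{fillsurgcoefflem} the associated fillable surgery has smooth coefficient $-\Delta\cdot\Delta = \sum_j n_j^2$, and by Remark \ref{techremark} it may be taken on $\T(p,q)$ itself.

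The computational core is the evaluation of $\sum_j n_j^2$. Here I would invoke the classical resolution of $\{z^p=w^q\}$: setting $c_0=p$, $c_1=q$, and $c_{i-1}=a_i c_i + c_{i+1}$ for the Euclidean remainders (so $c_N=1$ and $c_{N+1}=0$), the multiplicity at the infinitely near points equals $c_i$ taken $a_i$ times, and only the genuinely singular levels (those with $c_i\geq 2$, i.e. $i\leq N-1$, using $c_{N-1}=a_N\geq 2$) are blown up. Thus $-\Delta\cdot\Delta = \sum_{i=1}^{N-1} a_i c_i^2$. The needed identity $\sum_{i=1}^{N} a_i c_i^2 = pq$ is a one-line telescoping: from $a_i c_i = c_{i-1}-c_{i+1}$,
\[
\sum_{i=1}^N a_i c_i^2 = \sum_{i=1}^N (c_{i-1}-c_{i+1})\,c_i = c_0 c_1 - c_N c_{N+1} = pq.
\]
Since the omitted level contributes $a_N c_N^2 = a_N$, we obtain $-\Delta\cdot\Delta = pq - a_N = \lceil m(T(p,q))\rceil$, and combining with the lower bound gives $\mu(T(p,q)) = \lceil m(T(p,q))\rceil$.

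I expect the main obstacle to be the resolution bookkeeping: verifying that the minimal embedded resolution of the cusp has precisely the multiplicity sequence $\{c_i\}$ dictated by the Euclidean algorithm, and that blowing up only at the multiplicity-$\geq 2$ points produces an embedded symplectic disk whose boundary is unchanged and equal to the transverse pushoff of the maximal-$\tb$ representative. Pleasantly, I do \emph{not} need to argue that this algebraic disk minimizes $\sum_j n_j^2$ among all symplectic disks; that minimality is supplied for free by the $m(K)$ lower bound, so the upper-bound construction need only \emph{achieve} the value $\lceil m(T(p,q))\rceil$ rather than beat every competitor.
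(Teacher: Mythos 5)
Your proposal is correct and follows essentially the same route as the paper: the lower bound via Corollary \ref{negdefcor} and the Owens--Strle computation of $m(T(p,q))$, and the upper bound by resolving the cusp of $\{x^p+y^q=0\}$ with multiplicities governed by the Euclidean algorithm, omitting the final $a_n$ blowups of multiplicity $1$ to land on $-\Delta\cdot\Delta = pq - a_n$. The telescoping identity $\sum_i a_i c_i^2 = pq$ is exactly the paper's ``multiply each line of the algorithm by $p_j$ and substitute'' step.
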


\begin{proof} From Corollary \ref{negdefcor} and the results of Owens-Strle, the smallest fillable contact surgery on $\T(p,q)$ has smooth coefficient at least $\lceil m(T(p,q)) \rceil$. We wish to show that this smallest possible contact surgery is indeed symplectically fillable; in view of the remarks at the beginning of this section this is equivalent to showing that $T(p,q)$ bounds an embedded symplectic disk $\Delta$ in a blowup of $B^4$, such that $-\Delta\cdot \Delta = pq - a_n$.

To do so, we recall some basic facts about algebraic curves and blowing up. The torus knot $T(p,q)$ (being a transverse pushoff of $\T(p,q)$ and a positive braid) can be realized as the link of the singularity at the origin of the curve $C = \{x^p + y^q = 0\}$ in $\cee^2$. The blowup $\pi: \widetilde \cee^2 \to \cee^2$ at the origin can be described in coordinates by the transformation $x = u$, $y = uv$ (or, in another chart, by $x = uv$, $y = u$). Transforming the equation for $C$ by this rule gives the total transform $\pi^*C$, namely
\[
\pi^* C = \{u^q( u^{p-q} + v^q) = 0 \}.
\]
The reduced curve $u = 0$ describes the exceptional curve of the blowup in this coordinate system, and the remaining expression $u^{p-q} + v^q = 0$ corresponds to the proper transform $\widetilde{C}$ of $C$. In particular, from the expression above we see that the exceptional curve $E$ appears with multiplicity $q$ in $\pi^*C$, and we say that $q$ is the multiplicity of the singularity. It is standard to write $\widetilde{C} = C - q E$ as divisors where, by abuse of notation, $C$ is conflated with $\pi^*(C)$. (Note that in the other chart on the blowup, the proper transform is smooth and disjoint from the exceptional curve.) It follows from this that the self-intersection of $\widetilde{C}$ is $q^2$ less than that of $C$. Of course, the original curve $C$ lies in the 4-ball, hence its homology class is trivial and the self-intersection vanishes; we phrase the result as we do because of the iteration to follow.

Having recorded this effect on the self-intersection of the (in general, still singular) proper transform $\widetilde{C}$, note that $\widetilde{C}$ is a curve described by the same sort of equation as $C$, and we can iterate this procedure. Specifically, if $p-q > q$ then thinking of $\widetilde{C}$ as given by the equation $x^{p-q} + y^q = 0$, a blowup identical to the first one will give another exceptional curve of multiplicity $q$ and decrease the self-intersection by another $q^2$. On the other hand, if $p-q = r < q$, then we blow up the curve $x^r + y^q = 0$ using the transformation $x = uv$, $y = u$, and find the total transform $u^r(v^r + u^{q-r}) = 0$. Thus the blowup has multiplicity $r$ and the self-intersection decreases by $r^2$.

Clearly, this procedure is closely tied to the Euclidean algorithm for the quotient $p/q$, and thus its continued fraction. (See, for example \cite[Proposition 6.11]{pp2007}.) For ease of notation, let us write $p = p_0$ and $q = p_1$. Then the Euclidean algorithm yields:
\begin{eqnarray*}
p_0 &=& a_1 p_1 + p_2 \\
p_1 &=& a_2 p_2 + p_3 \\
 & \vdots & \\
 p_{n-2} &=& a_{n-1} p_{n-1} + p_n\\
 p_{n-1} &=& a_n p_n
 \end{eqnarray*}
 where in each line we consider dividing $p_j$ by $p_{j+1}$, with quotient $a_{j+1}$ and remainder $p_{j+2} < p_{j+1}$. The algorithm terminates when there is no remainder as in the last line above, and then $p_n$ is the greatest common divisor of $p_0$ and $p_1$. In our case of course this means $p_n=1$. Moreover, the coefficients $a_j$ are simply those appearing in \eqref{contfrac}.
 
With the earlier discussion in mind, the first line of the algorithm tells us that in resolving the singularity of $C$ we perform $a_1$ blowups each of multiplicity $p_1$, and hence each decreasing the self-intersection by $p_1^2$. Similarly the $j$-th line means we use $a_j$ blowups, each of multiplicity $p_j$. The final line suggests that $a_n$ blowups are required, but observe that since $p_n = 1$ the curve obtained in the previous step is of the form $0= x^{p_{n-1}} + y^{p_n} = x^{p_{n-1}} + y$, which is already smooth (it does, of course, have a tangency of order $a_n$ with the newest exceptional curve, but this is not relevant for us). 

The result of these considerations is that $T(p,q)$ bounds a smooth, properly embedded and complex (hence symplectic) disk $\Delta$ in a blowup of $B^4$ whose self-intersection is given by
\[
-\Delta\cdot \Delta = a_1 p_1^2 + a_2 p_2^2 + \cdots + a_{n-1}p_{n-1}^2.
\]
On the other hand, if for $j = 1,\ldots, n$ we multiply the $j$-th line of the Euclidean algorithm above by $p_j$, then sequentially substitute each line into the previous, we find
\[
p_0p_1 = a_1 p_1^2 + a_2 p_2^2 + \cdots + a_{n-1}p_{n-1}^2 + a_n p_n^2.
\]
Recalling that $p_n = 1$ and reverting to $p_0 =p$ and $p_1 = q$, this says $pq - a_n = -\Delta\cdot \Delta$ as desired.\end{proof}

\begin{remark} From this result it is natural to expect that if one considers positive rational contact surgeries on $\T(p,q)$, the smallest such surgery that is fillable will correspond to smooth coefficient exactly the rational number $m(T(p,q))$.
\end{remark}

\section{Proof of Main Theorems}\label{proofsec}

While Theorem \ref{mainthm} is stated in terms of contact surgery along Legendrian knots, for our purposes it is more natural to work with transverse knots and transverse surgery. We will review the relationship between these ideas (which was clarified in particular by the work of Conway \cite{Conway:transverse}) below.

\subsection{Open books and contact and transverse surgery}
Recall that an open book decomposition of a closed, connected, oriented 3-manifold $Y$ consists of an oriented link $B\subset Y$, the {\it binding} of the open book, together with a fiber bundle projection $\pi: Y - B \to S^1$ such that for each component $B_i$ of $B$, there is a tubular neighborhood $nbd(B_i) \cong  D^2\times B_i$  in which $\pi$ is identified with projection on the angular coordinate of $D^2$. These neighborhoods must be suitably compatible with orientations, so that for each $\theta\in S^1$ surface ${\Sigma}_\theta = \overline{\pi^{-1}(\theta)}$ given by the closure of  the preimage of $\theta$ is an oriented, connected Seifert surface for $B$ also called a {\it page} of the open book.

An open book can also be described by a pair $({\Sigma}, \psi)$, where ${\Sigma}$ is a compact connected oriented surface with boundary and $\psi$ an orientation-preserving diffeomorphism of ${\Sigma}$ fixing a neighborhood of $\partial {\Sigma}$, called the {\it monodromy}, by the following construction. Consider the mapping torus 
\begin{equation}\label{maptoruseq}
M(\psi) = [0,1]\times {\Sigma} / (1, x)\sim (0, \psi(x)),
\end{equation}
where, for future reference, we will write $[s, x]_\psi$ for the equivalence class of the point $(s,x)\in [0,1]\times {\Sigma}$ in $M(\psi)$. In particular, the boundary of $M(\psi)$ is $([0,1]/(0\sim 1)) \times \partial {\Sigma} = S^1\times\partial {\Sigma}$. Here and throughout, we will think of $S^1$ as $\arr / \zee = [0,1]/ 0\sim 1$ and typically use $\phi$ for the corresponding coordinate.

Attach to $M(\psi)$ the space $D^2\times \partial {\Sigma}$, a union of solid tori, whose boundary $\partial D^2 \times \partial {\Sigma}$ is identified with $\partial M(\psi)$ in the obvious way and so that $\partial {\Sigma}$ becomes the binding of an open book decomposition on the union. An open book decomposition for $Y$ is equivalent to such a pair $({\Sigma},\psi)$ along with a diffeomorphism between $Y$ and $M(\psi)\cup_\partial (D^2\times \partial {\Sigma})$, which we will generally suppress.

An open book $({\Sigma},\psi)$ is {\it compatible} with a contact structure $\xi = \ker \lambda$, and the contact structure is {\it supported} by the open book, if $d\lambda$ is a positive area form on the pages and $\lambda$ is positive on oriented tangents to $\partial {\Sigma}$. Any open book for $Y$ supports a unique isotopy class of contact structure; conversely any contact structure is isotopic to one supported by some open book. Note that the 3-manifold $Y$, and the supported contact structure, depend only on the equivalence class of $\psi$ under isotopies fixing a neighborhood of $\partial {\Sigma}$. We will write $\Mod({\Sigma})$ for the group of such isotopy classes, with the caveat that our $\Mod({\Sigma})$ consists of orientation-preserving diffeomorphisms fixing a neighborhood of $\partial {\Sigma}$, up to isotopy of such maps, even though the notation does not indicate these restrictions.

Let us fix an open book decomposition $({\Sigma},\psi)$ on $Y$ and compatible contact structure $\xi = \ker \lambda$. A knot $K\subset Y$ is {\it transverse} if $TK\pitchfork \xi$, and $\lambda$ is positive on oriented tangents to $K$. In particular, if $K$ is a component of the binding of an open book decomposition for $Y$, then $K$ is a transverse knot in the supported contact structure. Take $K$ to be a component of the binding, and for an integer $n>0$  construct a new open book $({\Sigma}', \psi'_n)$ as follows. 
\begin{enumerate}
\item Choose an annular collar neighborhood $A \cong S^1\times [0,1]$ of the boundary component of ${\Sigma}$ corresponding to $K$, and arrange by an isotopy  that $\psi$ is the identity map on $A$. We identify $S^1\times\{0\}$ with the boundary component $K$.
\item Let ${\Sigma}' = {\Sigma} \cup H$, where $H\cong [0,1]\times [0,1]$ is a 1-handle attached to two points of $S^1\times\{0\}$, so that ${\Sigma}'$ is an oriented surface with one more boundary component than ${\Sigma}$.
\item Extend $\psi$ to ${\Sigma}'$ by declaring it to be the identity on ${\Sigma}' - {\Sigma}$. Let $\psi'_n$ be the diffeomorphism given by the composition of $\psi$ with $t_{K_1}^{-1}\circ t_{\partial_1}\circ t_{\partial_2}^{n-1}$. Here $t_C$ means the right-handed Dehn twist around a simple closed curve $C$. We write $\partial_1$, $\partial_2$ for two simple closed curves in ${\Sigma}'$ parallel to the two components of $\partial {\Sigma}'$ adjacent to $H$, and $K_1$ for the curve $S^1\times 1\subset {\Sigma}$ parallel to $K$.
\end{enumerate}
Thus $({\Sigma}', \psi'_n)$ is constructed by adding a 1-handle to the boundary component corresponding to $K$, and composing $\psi$ with a left twist around $K$, a right twist along one new boundary component, and $n-1$ right twists along the other new boundary component. The relevance of this construction to our situation is the following.

\begin{lemma}\label{legsurglemma} Let $(Y,\xi)$ be a contact manifold and $\K$ an oriented Legendrian knot in $Y$. 
\begin{enumerate}
\item There exists an open book decomposition $({\Sigma}_0, \psi_0)$ supporting $\xi$ such that $\K$ lies on a page of the open book.
\item Suitable stabilization of $({\Sigma}_0,\psi_0)$ yields an open book decomposition $({\Sigma}, \psi)$ supporting $\xi$, such that a negative stabilization $\K^-$ of $\K$ lies on a page and is parallel to a binding component, which is isotopic to a transverse pushoff of $\K^-$.
\item For any $n>0$, the open book $({\Sigma}', \psi'_n)$ constructed from $({\Sigma},\psi)$ as above describes a  3-manifold with contact structure contactomorphic to the result $\xi_n^-(\K^-)$ of contact $n$-surgery along $\K^-$.
\item For any $n>0$, there is a contactomorphism between $\xi^-_n(\K)$ and $\xi^-_{n+1}(\K^-)$. 
\item The contact structure described by $({\Sigma}',\psi_n')$ is contactomorphic to the result of an inadmissible transverse surgery along the binding component $K$, and any integral inadmissible transverse surgery on $K$ can be described in this way for some $n>0$.
\end{enumerate}

\end{lemma}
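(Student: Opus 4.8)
The plan is to prove the five parts in order, with the real content in (2), (3), and (5); parts (1) and (4) are then essentially formal. For (1) I would use the standard fact that any Legendrian knot can be placed on a page of a supporting open book: beginning from any open book supporting $\xi$ furnished by the Giroux correspondence, one builds a contact cell decomposition of $(Y,\xi)$ whose $1$-skeleton contains $\K$ and passes to the associated open book $(\Sigma_0,\psi_0)$. For (2), the point is that a Legendrian on a page has a transverse pushoff obtained by displacing it off the page in the co-orienting direction, and that finitely many positive (Giroux) stabilizations along arcs running parallel to $\K$ can be used to arrange that a negative stabilization $\K^-$ lies parallel to a newly created binding component $K$. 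The essential check is that the sign of the stabilizations matches \emph{negative} stabilization of the Legendrian, so that $K$ is isotopic to the positive transverse pushoff of $\K^-$; this is consistent with $\K$ and $\K^-$ sharing a transverse pushoff, and I would confirm it in the local model of a Legendrian arc together with its page neighborhood.

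Part (3) is the main computation. I would show that $(\Sigma',\psi'_n)$ presents the smooth surgery on $\K^-$ of the correct coefficient and supports $\xi_n^-(\K^-)$, by analyzing the three features of the modification separately. The $1$-handle $H$ together with the right-handed twist $t_{\partial_1}$ about the boundary-parallel curve that traverses $H$ once constitutes a positive open-book stabilization, and hence does not change the contact manifold. The left-handed twist $t_{K_1}^{-1}$ about the curve $K_1$, which is parallel to $\K^-$, realizes contact $(+1)$-surgery on $\K^-$ through the standard dictionary between monodromy Dehn twists and contact surgery (Ding--Geiges--Stipsicz). Finally the $n-1$ right-handed boundary twists $t_{\partial_2}^{n-1}$ shift the surgery framing by $n-1$, upgrading the $(+1)$-surgery to $n$-surgery with smooth coefficient $\tb(\K^-)+n$. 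The remaining point is to verify that the stabilization convention produced is ``all negative,'' i.e. that the supported structure is $\xi_n^-(\K^-)$ rather than some other positive surgery; this I would pin down by comparing the sign of the boundary twists against the stabilization model.

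Parts (4) and (5) then follow. Part (4) is immediate from (2)--(3) and the relation $\xi_n^-(\K)\cong\xi_{n+1}^-(\K^-)$ recorded for a single negative stabilization in \cite[Lemma 2.6]{LS:surgery}. For (5) I would observe that the whole modification is supported in a neighborhood of the original binding component $K$, which is the positive transverse pushoff of $\K^-$, and then identify the boundary-twist recipe with Conway's open-book model for transverse surgery \cite{Conway:transverse}. Concretely, the combination $t_{K_1}^{-1}\,t_{\partial_2}^{n-1}$ re-glues the standard transverse neighborhood of $K$ with exactly the slope prescribed by an \emph{inadmissible} transverse surgery, and letting $n$ range over the positive integers sweeps out precisely the integral inadmissible slopes; this gives both the identification of $(\Sigma',\psi'_n)$ with an inadmissible transverse surgery and the claim that every such integral surgery arises in this way.

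The step I expect to be the main obstacle is the framing and sign bookkeeping running through (2), (3), and (5): one must simultaneously track the page framing, the contact framing, and the surgery framing, and fix the stabilization signs, since a single error shifts the coefficient by one or exchanges the admissible and inadmissible cases. Matching the combinatorial monodromy data to Conway's transverse-surgery normal form is where this is most delicate, and I would treat the local model near $K$ fully explicitly to keep it under control.
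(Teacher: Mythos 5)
Your proposal is correct and follows essentially the same route as the paper, which simply defers each part to the literature: (1) to Etnyre's lectures via contact cell decompositions, (2) to Baker--Etnyre--Van Horn-Morris, (3) to the Ding--Geiges--Stipsicz algorithm, and (3)--(5) to Conway's work on transverse surgery together with Lizarraga--Stipsicz-type stabilization identities. Your sketch is a reasonable unpacking of exactly those references, including the correct identification of the handle-plus-$t_{\partial_1}$ as a positive stabilization, $t_{K_1}^{-1}$ as the contact $(+1)$-surgery, and the boundary twists as the framing shift.
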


\begin{proof} (1) is fairly well-known; see \cite{Etnyre:OBlectures} for example. (2) follows from Lemma 6.5 of \cite{BEVHM}. (3) is essentially the Ding-Geiges-Stipsicz algorithm \cite{DGS:surgery} describing contact surgery; this point along with (4) and (5) are spelled out by Conway \cite{Conway:transverse} (see also \cite[Lemma 2.6]{LS:surgery}).
\end{proof}

In particular, when considering properties of contact structures arising as {\it some} positive contact surgery along a Legendrian $\K$, then at the expense of possibly replacing $\K$ by $\K^-$ and increasing the contact surgery coefficient by one we may assume that $\K$ is parallel to a binding component of some open book. (Note that the transverse pushoffs of $\K$ and $\K^-$ are transversely isotopic.)

\begin{remark} In the description we have given above, the contact structure $\xi_n^-(\K)$ is described by the abstract open book decomposition $({\Sigma}',\psi_n')$. In particular, it is determined only up to contactomorphism, rather than up to isotopy. A more precise description of contact surgery can be found in \cite{Honda:classification1} and \cite{DGS:surgery}, where it is shown how to extend the contact structure $\xi$ restricted to the complement of a neighborhood of $\K$ across the torus glued in during surgery, and in particular the choices involved in doing so. In the case of contact surgery with integer coefficient $n$ having $|n|>1$, there are exactly two possibilities, corresponding to a single choice of sign. The two choices in fact determine contactomorphic (but not generally isotopic) contact structures $\xi^+_n(\K)$ and $\xi^-_n(\K)$, essentially corresponding to the choice of which boundary component to label as $\partial_2$ in the description above.
\end{remark}

With the preceding in hand, we can prove one direction of Theorem \ref{mainthm}. 

\begin{theorem}\label{capthm} Suppose $\K$ is a Legendrian knot in a contact manifold $(Y,\xi)$ with the property that for some $n>0$, the contact structure $\xi^-_n(\K)$ is weakly fillable. Then $(Y,\xi)$ is weakly fillable, and there exists a weak filling $(Z,\omega)$ of $(Y,\xi)$ containing a properly embedded symplectic disk $\Delta$ whose boundary is a positive transverse pushoff of $\K$.
\end{theorem}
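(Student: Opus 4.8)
The plan is to reverse the scoop-out construction of Theorem~\ref{scoopthmintro}: beginning with a weak filling $(Z',\omega')$ of $\xi^-_n(\K)$, I would reinstall a symplectic tubular neighborhood of a disk to build a weak filling $(Z,\omega)$ of $(Y,\xi)$ whose core disk is the desired $\Delta$. The first step is to normalize the Legendrian. Applying a negative stabilization and incrementing the surgery coefficient if necessary (Lemma~\ref{legsurglemma}(4) and Remark~\ref{techremark}), and noting that negative stabilization does not change the transverse pushoff, I may assume that $\K$ is parallel to a binding component $K$ of an open book $(\Sigma,\psi)$ supporting $\xi$, with $K$ a positive transverse pushoff of $\K$, and that $\xi^-_n(\K)$ is the contact structure $\xi'$ carried by the modified open book $(\Sigma',\psi'_n)$ of Lemma~\ref{legsurglemma}(5)---an integral inadmissible transverse surgery on $K$. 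A disk bounding the transverse pushoff of this normalized $\K$ is also one for the original, so the reduction is harmless.

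Topologically, passing from $Y$ to $Y'=\partial Z'$ replaces a standard solid-torus neighborhood of $K$, and $Y$ is recovered by regluing a solid torus $V'\subset Y'$. This is exactly the boundary picture of a tubular neighborhood $U\cong D^2\times D^2$ of a properly embedded disk: its two boundary solid tori $S^1\times D^2$ and $D^2\times S^1$ are the neighborhood of $\partial\Delta=K$ in $Y$ and the region $V'\subset Y'$, respectively. The concrete content is then to equip $U$ with a symplectic form $\omega_N$ for which the core $\Delta=D^2\times\{0\}$ is symplectic with positively transverse boundary---modeled on the complex disk $\{z_2=0\}\subset\cee^2$---such that the germ of $\xi'$ along $V'$ and the germ of $\xi$ along the $K$-neighborhood are simultaneously realized on $\partial U$. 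Matching these germs with the open-book descriptions of $\xi$ and $\xi'$, and checking orientations so that $\partial\Delta$ is a \emph{positive} transverse pushoff, is the local calculation at the center of the argument; because $U$ is literally the neighborhood removed in Theorem~\ref{scoopthmintro}, the two models can be arranged to coincide.

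Finally I would glue $(U,\omega_N)$ to $(Z',\omega')$ along $V'$, smooth the corners along $T^2=\partial V'$, and check that $(Z,\omega)=(Z'\cup U,\ \omega'\cup\omega_N)$ weakly fills $(Y,\xi)$ with $\Delta\subset Z$ symplectic. I expect the gluing to be the main obstacle. Since $\omega'$ is only constrained to be positive on $\xi'$, it need not agree with $\omega_N$ along $V'$ in any rigid way, and a naive union will be neither smooth nor symplectic. I would resolve this using the deformation flexibility of weak fillings near a contact boundary (in the spirit of Eliashberg and Etnyre): after a modification of $\omega'$ supported in a collar of $Y'$, its restriction to $V'$---and in particular the cohomological data of its restriction to the gluing torus $T^2$---can be brought into agreement with that of $\omega_N$, permitting a symplectic interpolation across the gluing region. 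Verifying that $\Delta$ stays symplectic with positively transverse boundary throughout, and that the resulting $\omega$ is positive on $\xi$ along all of $Y$, are the remaining points requiring care.
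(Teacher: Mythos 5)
Your overall strategy is the right one, and it is in outline the paper's own: the paper also reverses the scoop by attaching a symplectic $2$-handle (a bidisk $D^2\times D^2$) to $(Z',\omega')$ along the solid torus that undoes the surgery, and takes $\Delta$ to be the cocore. Your normalization via Lemma~\ref{legsurglemma} and Remark~\ref{techremark} is exactly what the paper does. The difference is that the paper phrases the gluing as \emph{capping off a binding component} of the open book $({\Sigma}',\psi'_n)$ supporting $\xi^-_n(\K)$: the handle is attached along the extra boundary component $K'$ of the page (the one labeled $\partial_2$) with the page framing, so that in the capped-off open book the twists $t_{\partial_2}^{n-1}$ die and $t_{K_1}^{-1}t_{\partial_1}$ cancel, recovering $({\Sigma},\psi)$ and hence $\xi$ on the new boundary. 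This open-book bookkeeping is what guarantees that the contact structure you land on is $\xi$ itself and not merely some contact structure on $Y$; in your write-up this is asserted (``the two models can be arranged to coincide'') rather than derived, and the appeal to Theorem~\ref{scoopthmintro} is somewhat circular, since there the ambient form is already standard near $\Delta$, whereas here you must build the model compatibly with an arbitrary weak filling.

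The genuine gap is the step you yourself flag as the main obstacle: gluing the symplectic bidisk to a \emph{weak} filling. The cohomological matching on $V'\cong S^1\times D^2$ is indeed vacuous ($H^2(V';\arr)=0$), but exactness of $\omega'|_{V'}$ does not by itself produce a closed extension of $\omega'$ over $U$ that (i) is symplectic across the interpolation region, (ii) is positive on the contact planes of the capped-off contact structure along all of the new boundary $Y$, and (iii) keeps the core disk symplectic with positively transverse boundary. Establishing (i)--(iii) simultaneously is precisely the content of Wendl's capping-off theorem \cite[Theorem 5]{Wendl:nonexact}, which is the paper's key input and which also yields that $\omega$ agrees with $\omega'$ away from a collar of $Y'$; an Eliashberg--Etnyre-style deformation of $\omega'$ near the boundary is an ingredient of that proof but not a substitute for it. Finally, identifying $\partial\Delta$ with the positive transverse pushoff of $\K$ is more than an orientation check: the boundary of the cocore is the $1$-braid traced by the center of the capping disk, and seeing that this braid is transversely isotopic to the binding component $K$ requires the explicit model of Lemma~\ref{1braidlem}. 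With Wendl's theorem and Lemma~\ref{1braidlem} supplied, your argument closes up and coincides with the paper's proof of Theorem~\ref{capthm}.
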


\begin{proof}
Suppose $(Z', \omega')$ is a weak symplectic filling of $(Y', \xi_n^-(\K))$ (where $Y'$ is the smooth manifold underlying the result of contact surgery). By the remarks above, we may suppose that $\K$ is parallel to a binding component $K$ of an open book $({\Sigma},\psi)$ for $(Y,\xi)$, so that $\xi^-_n(\K)$ is supported by the open book $({\Sigma}', \psi'_n)$ as above, and $K$ is a transverse pushoff of $\K$. Let $K'$ be the component of $\partial {\Sigma}'$ corresponding to the component labeled $\partial_2$ in the description of $({\Sigma}', \psi'_n)$ above.

Thinking of $Y'$ as $\partial Z'$, let $Z$ be the result of attaching a 2-handle to $Z'$ along $K'$, with framing equal to that induced by the page ${\Sigma}'$. Then $\partial Z$ is the result of what is known as a ``capping off'' operation; it carries a natural open book whose page is given by the union of ${\Sigma}'$ with a disk glued along $K'$, and monodromy extended by the identity across the disk. In the resulting surface, the twists $t^{n-1}_{\partial_2}$ are isotopically trivial, while the curves $\partial_1$ and $K$ are now parallel so that the right and left twists on these curves that appear in $\psi'_n$ cancel in the capped-off open book. In other words, the capped off open book is equivalent to the original open book $({\Sigma}, \psi)$ on $Y$, and in particular $\partial Z = Y$. 

It is a consequence of a theorem of Wendl \cite[Theorem 5]{Wendl:nonexact} that the manifold $Z$ carries a symplectic form $\omega$ that weakly fills the contact structure supported by the capped-off open book $({\Sigma},\psi)$; in fact $\omega|_{Z'}$ can be taken to agree with $\omega'$ away from a small neighborhood of $Y'$. In particular, we find that $(Y, \xi)$ is weakly fillable. Moreover, Wendl's construction shows that the cocore of the 2-handle used in the construction of $Z$ is a symplectic disk $\Delta$ with (positively) transverse boundary. Topologically, this cobordism from $Y'$ to $Y$ is nothing but the trace of the surgery from $Y$ to $Y'$, turned around; from this point of view it is clear that the boundary of the cocore is smoothly isotopic to $K$. In terms of the open book, the boundary of the cocore can be seen as the braid traced out by the center of the disk used to cap off ${\Sigma}'$, and it follows from Lemma \ref{1braidlem} below that this braid is transversely isotopic to $K$ as well.  
\end{proof}

Note that the argument proving Theorem \ref{capthm} actually shows that if $(Z',\omega')$ is a weak filling of $(Y',\xi^-_n(\K))$, then by attaching a 2-handle as in the proof we find $Z'$ is symplectomorphic to the complement of a neighborhood of the (symplectic) cocore disk inside the filling $Z$ of $(Y,\xi)$. This proves the ``moreover'' statement in Theorem \ref{mainthm}.

The proof of the converse direction of Theorem \ref{mainthm} amounts to showing that one can ``undo'' the previous construction, by removing a suitable neighborhood of a symplectic disk in a weak filling. We carry this out in the following sections.

\subsection{Braids and monodromy}\label{braidsec} In our argument below it will be convenient to arrange $K$ in a particular way with respect to an open book decomposition for $(Y, \xi)$. Recall that a knot $K\subset Y$ is {\it braided} with respect to an open book if $K$ is everywhere transverse to the pages of the open book (in particular $K$ is disjoint from the binding), and intersects the pages positively. It is a theorem of Mitsumatsu and Mori \cite[Appendix]{mitsumatsu06} (see also Pavelescu \cite{pavelescu11}) that any transverse knot in $(Y,\xi)$ is isotopic, through transverse knots, to one that is braided with respect to a given open book supporting $\xi$. 

A braided transverse knot can be described in terms of the monodromy $\psi$, in a way analogous to classical braids. Namely, if ${\Sigma}$ is a chosen page of the open book and $K \cap {\Sigma}$ consists of the points $p_1,\ldots, p_k\in Int({\Sigma})$, then $K$ determines and is determined by a choice of lift of $\psi\in \Mod({\Sigma})$ to an element $\tilde\psi\in\Mod({\Sigma}, \{p_1,\ldots, p_k\})$ of the group of isotopy classes of diffeomorphism preserving the set $\{p_1,\ldots, p_k\}$, i.e., the {\it surface braid group}. In the case that $K$ is a binding component of the open book, the following shows in particular that $K$ is transversely isotopic to a 1-braid, meaning a braid that intersects each page just once, which can be described by a particular choice of lift of $\psi$.

\begin{lemma}\label{1braidlem}  Let $({\Sigma}, \psi)$ be an open book decomposition for a contact manifold $(Y, \xi)$, and $K$ the positive transverse knot given by a boundary component of ${\Sigma}$. Consider an annular collar neighborhood $A$ of that boundary component on which $\psi$ is the identity, and let $p\in {\Sigma}$ be a point in the interior of $A$. Choose disjoint boundary-parallel simple closed curves in $A$ such that $p$ is between the two curves; let $\gamma_1$ be the curve closer to the boundary and $\gamma_2$ the other. Let $\tilde\psi\in \Mod({\Sigma},p)$ be the mapping class that agrees with $\psi$ away from $A$, and is given by the composition of a right Dehn twist on $\gamma_1$ and a left Dehn twist on $\gamma_2$. Then the 1-braid represented by the pointed surface $({\Sigma}, p)$ with monodromy $\tilde\psi\in \Mod({\Sigma},p)$ is transversely isotopic to $K$. 
\end{lemma}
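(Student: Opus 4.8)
The plan is to recognize the prescribed monodromy $\tilde\psi$ as a \emph{point-pushing} map and thereby identify the associated $1$-braid with the standard braided pushoff of the binding into a nearby page. First I would check that $\tilde\psi$ is a legitimate lift of $\psi$, i.e. that forgetting the marked point $p$ recovers $\psi\in\Mod({\Sigma})$. The curves $\gamma_1$ and $\gamma_2$ are both parallel to the boundary component $K$ and cobound the subannulus of $A$ containing $p$; once $p$ is forgotten they are isotopic in ${\Sigma}$, so $t_{\gamma_1}t_{\gamma_2}^{-1}=\mathrm{id}$ in $\Mod({\Sigma})$ and hence $\tilde\psi$ projects to $\psi$ there. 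Thus $\tilde\psi\in\Mod({\Sigma},p)$ genuinely determines a $1$-braid in $(Y,\xi)$ supported by $({\Sigma},\psi)$. The nontriviality of $t_{\gamma_1}t_{\gamma_2}^{-1}$ in $\Mod({\Sigma},p)$ lives precisely in the point-pushing (Birman) subgroup: with $p$ between $\gamma_1$ and $\gamma_2$, the composite of a right twist about the curve nearer $\partial{\Sigma}$ and a left twist about the curve farther in is exactly the map $\mathrm{Push}(\delta)$ dragging $p$ once around a simple loop $\delta\subset A$ parallel to $K$, the handedness of the twists fixing the direction of the push.

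Next I would make the braid explicit. Writing $\tilde\psi=\psi\circ\mathrm{Push}(\delta)$ and choosing an isotopy $\Phi_s$ of ${\Sigma}$ from the identity to $\mathrm{Push}(\delta)$ (which exists since the push is trivial in $\Mod({\Sigma})$), the standard identification $M(\tilde\psi)\cong M(\psi)=Y$ given by $[s,x]_{\tilde\psi}\mapsto[s,\Phi_s(x)]_\psi$ carries the braid $\{[s,p]_{\tilde\psi}\}$ to the curve $\{[s,\Phi_s(p)]_\psi : s\in[0,1]\}$. By construction $\Phi_s(p)$ traces the loop $\delta$, so this curve winds once in the longitudinal ($K$) direction while $s$ sweeps once through the page-angle direction. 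In a Giroux normal-form solid torus $D^2\times S^1\cong\{(\rho,\alpha,\theta)\}$ about $K$ — where $\theta$ is the binding coordinate, $\alpha$ the page angle, and $K=\{\rho=0\}$ — this is precisely the $(1,1)$-curve $c(s)=(\rho_0,s,\theta(s))$ at small radius $\rho_0$, i.e. the evident braided pushoff of the binding. (Had the monodromy been $\psi$ alone, one would instead obtain a meridian $\{[s,p]_\psi\}$ bounding a disk in a page; the point-push is exactly what converts this meridian into a binding-parallel curve.)

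Finally I would verify that $c$ is transversely isotopic to $K$. In suitable normal-form coordinates the contact form has the qualitative shape $\lambda=d\theta+\rho^2\,d\alpha$, so $K=\{\rho=0\}$ is positively transverse, and $\lambda(\dot c)=1+\rho_0^2>0$ shows $c$ is positively transverse as well; shrinking $\rho_0\to 0$ through $(1,1)$-curves then gives a transverse isotopy from $c$ to $K$.

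The main obstacle is the sign and orientation bookkeeping in the middle step: one must confirm that the prescribed handedness (right twist on the curve nearer $\partial{\Sigma}$, left twist on the one farther in) produces the point-push in the direction compatible with the coorientation of $\xi$ and the orientation of the binding, so that the resulting $(1,1)$-curve is the \emph{positive} transverse pushoff rather than a negatively oriented or oppositely wound parallel copy. The conceptual identification — point-push together with degree-one winding equals the braided pushoff of the binding — is robust, but matching these conventions against the normal form near $K$ is where the care lies.
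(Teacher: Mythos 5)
Your proposal is correct and follows essentially the same route as the paper: the paper's proof also restricts to the annular neighborhood, realizes $\tilde\psi$ via an isotopy $\psi_s$ supported in $A$ (which is precisely your point-push around the core), identifies $M(\tilde\psi)\cong M(\id)$ by $[s,a]_{\tilde\psi}\mapsto[s,\psi_s(a)]_{\id}$, computes the braid to be the $(1,1)$-curve $c(s)=(\theta=s,\tau=T,\phi=s)$, checks $\lambda(c'(s))>0$ against the normal form $d\theta+\tfrac12\rho^2\,d\phi$, and shrinks $\tau\to 0$ to reach the binding. The orientation bookkeeping you flag is handled in the paper exactly as you anticipate, by fixing oriented coordinates $(\theta,\tau)$ on $A$ with $\tau$ inward-directed and $K$ oriented by increasing $\theta$, and verifying from an explicit model of the two twists that the pushed point travels in the direction of increasing $\theta$.
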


The situation of the Lemma is illustrated in Figure \ref{bindbraidfig}.

\begin{figure}
\begin{center}
\def\svgwidth{2.5in}
\begingroup%
  \makeatletter%
  \providecommand\color[2][]{%
    \errmessage{(Inkscape) Color is used for the text in Inkscape, but the package 'color.sty' is not loaded}%
    \renewcommand\color[2][]{}%
  }%
  \providecommand\transparent[1]{%
    \errmessage{(Inkscape) Transparency is used (non-zero) for the text in Inkscape, but the package 'transparent.sty' is not loaded}%
    \renewcommand\transparent[1]{}%
  }%
  \providecommand\rotatebox[2]{#2}%
  \newcommand*\fsize{\dimexpr\f@size pt\relax}%
  \newcommand*\lineheight[1]{\fontsize{\fsize}{#1\fsize}\selectfont}%
  \ifx\svgwidth\undefined%
    \setlength{\unitlength}{246.46205151bp}%
    \ifx\svgscale\undefined%
      \relax%
    \else%
      \setlength{\unitlength}{\unitlength * \real{\svgscale}}%
    \fi%
  \else%
    \setlength{\unitlength}{\svgwidth}%
  \fi%
  \global\let\svgwidth\undefined%
  \global\let\svgscale\undefined%
  \makeatother%
  \begin{picture}(1,0.62480579)%
    \lineheight{1}%
    \setlength\tabcolsep{0pt}%
    \put(0,0){\includegraphics[width=\unitlength,page=1]{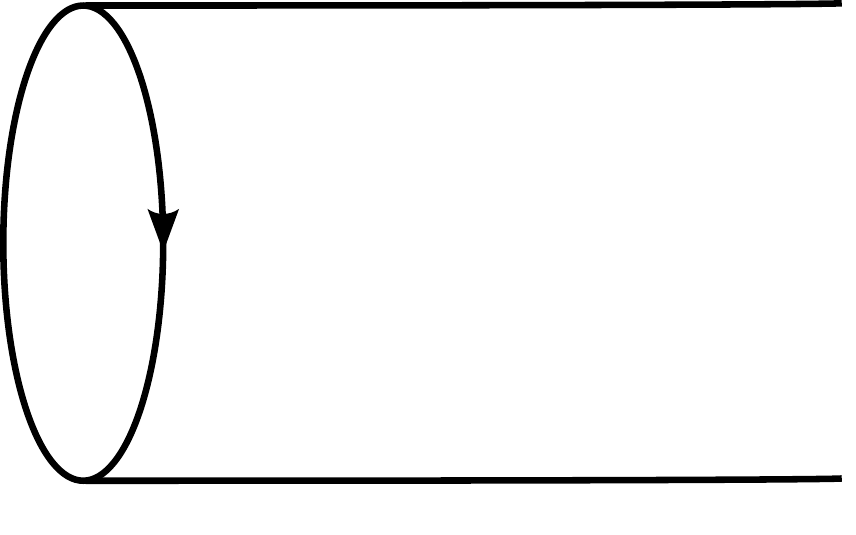}}%
    \put(0.50034062,0.51559439){\makebox(0,0)[lt]{\lineheight{1.25}\smash{\begin{tabular}[t]{l}$(+)$\end{tabular}}}}%
    \put(0.9067086,0.5232138){\makebox(0,0)[lt]{\lineheight{1.25}\smash{\begin{tabular}[t]{l}$(-)$\end{tabular}}}}%
    \put(0,0){\includegraphics[width=\unitlength,page=2]{bindbraid.pdf}}%
    \put(0.04970163,0.00572159){\makebox(0,0)[lt]{\lineheight{1.25}\smash{\begin{tabular}[t]{l}$K$\end{tabular}}}}%
    \put(0.63639543,0.31303734){\makebox(0,0)[lt]{\lineheight{1.25}\smash{\begin{tabular}[t]{l}$p$\end{tabular}}}}%
    \put(0.37225624,0.01080116){\makebox(0,0)[lt]{\lineheight{1.25}\smash{\begin{tabular}[t]{l}$\gamma_1$\end{tabular}}}}%
    \put(0.78242022,0.00717295){\makebox(0,0)[lt]{\lineheight{1.25}\smash{\begin{tabular}[t]{l}$\gamma_2$\end{tabular}}}}%
  \end{picture}%
\endgroup%

 \caption{The braid described by $p$ under the monodromy given by a right Dehn twist along $\gamma_1$ and a left twist on $\gamma_2$ is transversely isotopic to $K$.}
  \label{bindbraidfig}
\end{center}
\end{figure}

For the proof of the Lemma, as well as for certain calculations to follow, the following digression will be useful.

\subsubsection*{Digression: construction of compatible contact forms} We briefly review the construction of a contact form compatible with an open book $({\Sigma},\psi)$ (we  follow the procedure of \cite[Section 3.2]{Wendl:nonexact}). First, one finds a 1-form $\alpha$ on ${\Sigma}$ such that 
\begin{itemize}
\item $d\alpha$ is a positive area form on ${\Sigma}$.
\item $\alpha$ has a standard form near $\partial {\Sigma}$, as follows. For each boundary component of ${\Sigma}$, choose an annular neighborhood $A\cong S^1\times [\tau_0,1]$ on which $\psi$ restricts to the identity, with oriented coordinates $(\theta, \tau)$, and require $\alpha = (2-\tau)\,d\theta$. Here $\tau_0$ is a constant with $0<\tau_0<1$ and  we suppose that the boundary of ${\Sigma}$ corresponds to $\tau = \tau_0$.
\item Additionally, if $D\subset Int({\Sigma})$ is a disk with $\psi|_D = id$, with polar coordinates $(\rho, \theta)$, then we can choose $\alpha$ so that $\alpha = {1\over 2} \rho^2\, d\theta$ on $D$. 
\end{itemize}

Next, by an interpolation argument one constructs a 1-form $\alpha_\psi$ on the mapping torus $M(\psi)$ such that $d\alpha_\psi>0$ on each slice $\{\phi\}\times {\Sigma}$. Further, we can suppose that $\alpha_\psi$ agrees with (the pullback of) $\alpha$ on the subsets $S^1\times A$ and $S^1\times D$ for the annuli $A$ and disk $D$ mentioned above. Then for any small $\epsilon>0$, the form $d\phi + \epsilon \alpha_\psi$ is a contact form on $M(\psi)$ and has the form $d\phi +\epsilon(2-\tau) \, d\theta$ near the boundary tori. (Recall that $\phi$ is our usual notation for the coordinate on $S^1$ corresponding to the fibration of the mapping torus.)

Finally, one fills in the binding $D^2\times \partial {\Sigma}$ by using the coordinate identifications between $ (D^2-0)\times S^1$ and $S^1\times A$ (where a component of $\partial {\Sigma}$ has been identified with $S^1$) given by $(\rho, \phi, \theta)\mapsto (\phi, \theta, \tau = \rho)$ for $\rho\geq \tau_0$. To extend the contact form across the binding, begin by choosing a constant $\tau_0'$ with $0<\tau_0'< \tau_0$, and a 1-form $\lambda_0 = f(\rho)\, d\theta + g(\rho)\, d\phi$ on $D^2\times S^1$ , such that:
\begin{itemize}
\item $(f,g)$ defines a path in the first quadrant from $(1,0)$ to $(0,1)$ as $\rho$ increases from 0 to 1.
\item $\lambda_0$ is a smooth contact form on $\{\rho < \tau_0\} \subset D^2\times S^1$.
\item For $\rho\in [\tau_0, 1)$ we have $f(\rho) = 0$.
\item For $\rho \in [\tau_0', 1)$ we have $g(\rho) = 1$
\end{itemize}
If $\epsilon$ is chosen as above, we modify $\lambda_0$ as follows. Choose another function $f_\epsilon(\rho)$ such that 
\begin{itemize}
\item $f_\epsilon = f$ on $[0,\tau_0']$,
\item $f_\epsilon' <0$ on $[\tau_0', \tau_0]$
\item $f_\epsilon(\rho) = \epsilon(2-\rho)$ on $[\tau_0, 1]$. 
\end{itemize}
Then the 1-form $\lambda_\epsilon$ on $D^2\times S^1$ given by
\[
\lambda_\epsilon = \left\{\begin{array}{ll} f_\epsilon(\rho) \,d\theta + d\phi & \mbox{for $\rho \in [\tau_0', 1]$} \\ \lambda_0 & \mbox{for $\rho \leq \tau_0'$}\end{array}\right.
\]
is a smooth contact form on $D^2\times S^1$ that agrees with the form $d\phi + \epsilon\alpha_\psi$ when $\rho \geq \tau_0$, and with $\lambda_0$ for $\rho< \tau_0'$. In particular, note that by choosing $f(\rho) = 1$ and $g(\rho) = {1\over 2}\rho^2$ for $\rho< \tau_0''< \tau_0'$, we can obtain a contact form that in coordinates $(\rho, \phi, \theta)$ on a neighborhood $D^2\times S^1$ of a binding component has the form
\[
\lambda = d\theta + \ts{1\over 2}\rho^2\, d\phi \quad (\rho < \tau_0'').
\]

This ends the digression.

\begin{proof}[Proof of Lemma \ref{1braidlem}] It will suffice to restrict to a neighborhood of $K$. In particular we consider $nbd(K)$ to be equipped with an open book whose page is the annulus $A$ and whose monodromy is the restriction of $\psi$, i.e., the identity. Let $\psi_s$, $s\in [0,1]$ be an isotopy supported in $A$ with $\psi_0 = \psi$ and $\psi_1 = \tilde\psi$ (the isotopy is relative to a small neighborhood of $\partial A$, but of course moves the point $p$). An isomorphism between the mapping tori of $\psi = \id$ and $\tilde\psi$ (as maps of $A$) is then given by
\begin{align*}
M(\tilde\psi) &\to M(\id)\\
[s, a]_{\tilde\psi} & \mapsto [s, \psi_s(a)]_{\id},
\end{align*}
using the notation introduced after \eqref{maptoruseq}. In particular the braid in $M(\psi)$ parametrized by $s\mapsto [s, p]_{\tilde\psi}$ corresponds to the curve $s\mapsto [s,\psi_s(p)]_{\id}$ in $M(\id)$. 

Choose oriented coordinates $(\theta, \tau)$ on $A = S^1\times [0,1]$, where $\tau = 0$ corresponds to the boundary component $K$. This means that $\tau$ is an {\it inward}-directed coordinate on the collar, and the orientation on $K$ as the boundary of ${\Sigma}$ is given by the direction of {\it increasing} $\theta$. By the digression above, we can suppose that the contact structure on $nbd(K)$ is given by identifying $nbd(K) = S^1\times D^2$ with coordinates $\theta$ on $S^1$ and $(\rho, \phi)$ polar coordinates on $D^2$,  and for a chosen $\tau_0''<1$, the contact form is given by $\lambda = d\theta + {1 \over 2}\rho^2 \, d\phi$ on the set $\{\rho\leq \tau_0''\}$. The open book projection $(\theta, \rho, \phi)\mapsto \phi$ supports this contact structure, and the page is the annulus with coordinates $(\theta, \rho)$. Thus the mapping torus $M(\id)$ can be identified (after collapsing the boundary component $\{\tau = 0\}$ to a circle) with $S^1\times D^2$ by the natural map $[s, (\theta, \tau)]_{\id}\mapsto (\theta, \rho=\tau, \phi = s)$. 

For convenience, let us assume $p\in A$ is given by the point $(\theta, \tau) = (0, T)$ for some $T < \tau_0''$. By writing an explicit model for the twists on $\gamma_1$ and $\gamma_2$, it is not hard to see that the isotopy $\psi_s$ can be chosen so that in the given coordinates the path $s\mapsto \psi_s(p)$ is $s\mapsto (\theta = s, \tau = T)$. Hence, by the identification between $M(\tilde\psi)$ and $M(id)$ above,  the braid under consideration in $M(\id)$ is the oriented curve $c(s)= (\theta = s, \tau = T, \phi = s)$. The tangent to this curve clearly has $\lambda(c'(s)) >0$; in fact if we take $c_\tau(s) = (\theta = s, \tau, \phi=s)$ then letting $\tau$ decrease from $T$ to $0$ gives a transverse isotopy from the 1-braid to the oriented binding $\{\tau = 0\}$. 
\end{proof}

\subsection{Carving symplectic disks} We continue with the notation of the previous section: we are given a contact 3-manifold $(Y,\xi)$ with a weak symplectic filling $(Z,\omega)$ containing a properly embedded symplectic disk $\Delta$ whose boundary is a positively transverse knot $K\subset Y$. The goal of this section and the next is to prove:

\begin{theorem}\label{scoopthm} The symplectic disk $\Delta$ has an arbitrarily small open neighborhood $U$, such that if $Z' = Z - U$ and $\omega' = \omega|_{Z'}$, then $(Z', \omega')$ is a weak symplectic filling of a contact structure $\xi'$ on $Y' = \partial Z'$. Moreover, the contact structure $\xi'$ is obtained from $\xi$ by an inadmissible transverse surgery along $K$, or (equivalently) from a positive contact surgery along a Legendrian approximation of $K$.
 \end{theorem}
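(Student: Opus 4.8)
The plan is to replace a neighborhood of $\Delta$ by an explicit symplectic model whose boundary carries a contact form I can recognize, using the open book technology of Section \ref{braidsec}, and then to verify the weak filling condition by hand. In outline, this is the capping-off construction of Theorem \ref{capthm} run in reverse: there a $2$-handle was attached so that $\Delta$ became its cocore, and here I want to excise a standard neighborhood of $\Delta$ and see the surgered contact manifold emerge on the new boundary.

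First I would normalize the picture near $K=\partial\Delta$. After a transverse isotopy (dragging $\Delta$ along with its boundary), I would use the theorem of Mitsumatsu and Mori \cite{mitsumatsu06} together with Lemma \ref{1braidlem} to arrange that $K$ is braided with respect to an open book $({\Sigma},\psi)$ supporting $\xi$, and in fact is a binding component realized as a $1$-braid. This places me in the explicit coordinates of the digression, where a neighborhood of $K$ is $S^1\times D^2$ with compatible contact form $\lambda=d\theta+{1\over 2}\rho^2\,d\phi$, giving concrete boundary data against which to match the model.

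Second I would build a standard symplectic model for a tubular neighborhood $U$ of $\Delta$. Since $\Delta$ is a symplectic disk, its symplectic normal bundle is a trivial complex line bundle, and the symplectic neighborhood theorem identifies $U$ with a product model $D^2_\Delta\times D^2_\nu$ sending $\Delta$ to $D^2_\Delta\times\{0\}$; shrinking the radius of $D^2_\nu$ makes $U$ arbitrarily small, as the statement requires. The delicate point is that this identification must be compatible, along the face $\partial D^2_\Delta\times D^2_\nu\subset Y$, with the contact collar determined by $\lambda$; because $(Z,\omega)$ is only a \emph{weak} filling there is no Liouville field available to pin this down, so I would instead prescribe the model form to dominate $\xi$ on that face and invoke a relative Moser argument fixing the boundary data, leaning on the flexibility of weak fillings in the spirit of Wendl \cite{Wendl:nonexact} and deforming $\omega$ in a collar if necessary.

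With the model in hand I would carve: set $Z'=Z-U$ and analyze $Y'=\partial Z'$. Topologically, as in Lemma \ref{fillsurgcoefflem}, this deletes the solid-torus neighborhood $\partial D^2_\Delta\times D^2_\nu$ of $K$ and glues in the vertical face $D^2_\Delta\times\partial D^2_\nu$, effecting surgery on $K$ along the framing of $\Delta$. On the new face I would read off from the model form a contact structure, extend the old contact form across the surgery region, and check that $\omega'$ is positive on these planes, exhibiting the weak filling. The step I expect to be the main obstacle is the identification: showing that the glued-in solid torus is precisely the one produced by an inadmissible transverse surgery along $K$, equivalently that the carved open book is the stabilized pair $({\Sigma}',\psi'_n)$ of the construction preceding Lemma \ref{legsurglemma}. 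This amounts to matching the twisting of the model's characteristic foliation on $D^2_\Delta\times\partial D^2_\nu$ with the composition of Dehn twists $t_{K_1}^{-1}\circ t_{\partial_1}\circ t_{\partial_2}^{n-1}$ defining $\psi'_n$ --- essentially reversing the capping-off computation. Once the carved open book is identified with $({\Sigma}',\psi'_n)$, parts (3) and (5) of Lemma \ref{legsurglemma} supply both descriptions of $\xi'$, namely as an inadmissible transverse surgery along $K$ and as a positive contact surgery on a Legendrian approximation, completing the argument.
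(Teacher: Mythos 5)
Your proposal follows essentially the same route as the paper: realize $K$ as a $1$-braid near the binding of a supporting open book with the standard contact form in explicit coordinates, identify a neighborhood of $\Delta$ with the split model $(\Delta_1\times\Delta_2,\omega_0)$ via the symplectic neighborhood theorem, carve along an explicit hypersurface, interpolate a contact form on the new boundary whose open-book compatibility and weak-filling positivity are checked by hand, and identify the resulting monodromy as the old one composed with $n>0$ boundary-parallel Dehn twists coming from the framing mismatch between $\Delta$ and the page. The one divergence is your proposed relative Moser argument to match the symplectic model with the contact collar: the paper needs no such matching, instead shrinking the parameter $\epsilon$ in the compatible contact form (via Gray stability, deforming $\omega$ only in a collar of $Y$ without changing the size of the symplectic neighborhood) until the explicit interpolation of the contact form on the carving hypersurface satisfies both the winding (contact) condition and the positivity of $\omega$ on the new contact planes.
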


This is essentially a restatement of Theorem \ref{scoopthmintro}. Taken together, this theorem and Theorem \ref{capthm} imply Theorem \ref{mainthm}. 

As preparation, we can assume by an isotopy that the transverse knot $K$ is a 1-braid with respect to an open book decomposition of $(Y,\xi)$. In particular, we can suppose that $K$ is determined by a lift of the monodromy $\psi$ to an element of the mapping class group of ${\Sigma}$ relative to a point $p$ (as well as $\partial {\Sigma}$), which we still call $\psi$. In fact, we can assume that this lift fixes a disk neighborhood $D\subset {\Sigma}$ of $p$, and call the resulting choice $\hat\psi\in \Mod({\Sigma}, D)$. The choice of $\hat\psi$ is essentially equivalent to a choice of framing on $K$; any desired framing can be realized by composing $\hat\psi$ with Dehn twists around $\partial D$. The complement of the binding in $Y$ is identified with the mapping torus $M(\hat\psi) = [0,1]\times {\Sigma}/\sim$, and the choice of lift $\hat\psi$ determines an identification $nbd(K) \cong S^1\times D$. 

\begin{itemize}
\item Coordinates on $nbd(K)$ will be $(\kappa_1, \rho, \kappa_2)$, where $(\rho, \kappa_2)$ are polar coordinates on $D$ with $\rho \leq 1$. All angular coordinates are taken in $\arr / \zee$. 
\item The open book projection $\pi: Y-B \to S^1$ is given in $nbd(K)$ by $\pi(\kappa_1, \rho, \kappa_2) = \kappa_1$, so the pages of the open book are tangent to $\ker (d\kappa_1)$.
\item By the digression in section \ref{braidsec},  in this neighborhood of $K$ the contact form can be taken to be 
\[
\lambda_Y = d\kappa_1 + \epsilon \rho^2 \, d\kappa_2
\]
for any small $\epsilon$ (since in $nbd(K)$ we have $\phi = \kappa_1$).
\end{itemize}

Now turn to the weak filling $Z$. The disk $\Delta$ has a neighborhood diffeomorphic to $\Delta_1\times \Delta_2$, where $\Delta_i$ is a unit disk with coordinates $(r_i, \theta_i)$ for $ i = 1,2$. We can suppose that under this diffeomorphism $\partial \Delta_1\times\Delta_2$ is identified with $nbd(K) = S^1\times D$, though the framings need not agree. In particular, after an isotopy we can assume that this identification is via a diffeomorphism 
\begin{align*}
F: \partial \Delta_1\times \Delta_2 &\to nbd(K) = S^1 \times D\\
F: (r_1=1, \theta_1, r_2, \theta_2) & \mapsto (\kappa_1 = \theta_1, \rho = r_2, \kappa_2 = \theta_2 + n\theta_1)
\end{align*}
for some integer $n$ representing the difference between the framing on $K$ induced by the disk $\Delta_1$ and that corresponding to the choice of $\hat\psi$. For later use, we will always assume that $\hat\psi$ has been chosen so that $n>0$.

Recall that $\Delta\subset Z$ is assumed to be symplectic; by rescaling the form $\omega$ we may suppose it has symplectic area $\pi$. By standard neighborhood theorems, since the normal bunde of $\Delta$ is (symplectically) trivial there is a symplectomorphism between some tubular neighborhood $U_Z$ of $\Delta$ in $Z$ and a neighborhood $U_\Delta$ of $\Delta_1\times 0\subset \Delta_1\times\Delta_2$, where the symplectic structure on the latter is given by the standard split form
\[
\omega_0 = r_1\,dr_1\wedge d\theta_1 + r_2\, dr_2\wedge d\theta_2.
\]
Choose $r_0$ small enough that the subset $\{r_2\leq r_0\}$ lies in $U_\Delta$, so that we can symplectically identify that set with a subset of $Z$. In the following we will work with the standard structure above on $\Delta_1\times \Delta_2$ with the understanding that our construction should be restricted to the subset $U_\Delta$ and transported via symplectomorphism into $U_Z\subset Z$.

Now, $(Z,\omega)$ is assumed to be a weak filling of $(Y,\xi)$, where $\xi$ is a contact structure supported by the open book determined by $({\Sigma},\psi)$. Observe that varying the choice of $\epsilon$ in the construction of such a contact form, as in the digression in Section \ref{braidsec}, corresponds to an isotopy of $\xi$; by Gray stability such an isotopy can be realized by an isotopy of $Y$. A diffeomorphism of $Z$ that is the identity away from a collar neighborhood of $Y$ and effects this isotopy in the collar will pull back $\omega$ to a symplectic form that weakly fills the modified contact structure, and determine a symplectic structure on $Z$ deformation equivalent to the original. In particular the constant $r_0$ does not change through this process, and the conclusion is that we may choose $\epsilon$ as small as desired without affecting $r_0$.

 Consider a solid torus $D^2\times S^1$ with coordinates $(\sigma, \alpha, \beta)$, where $(\sigma, \alpha)$ are polar coordinates on the unit disk $D^2$ with $\alpha, \beta\in \arr / \zee$. We embed $D^2\times S^1$ in $\Delta_1\times \Delta_2$ by a map 
\begin{align}
\nonumber i: D^2\times S^1 &\to \Delta_1\times \Delta_2\\
\label{incldef} i: (\sigma, \alpha, \beta) &\mapsto (r_1 = r_1(\sigma), \theta_1 = \alpha, r_2 = r_2(\sigma), \theta_2 = \beta),
\end{align}
where $r_1(\sigma)$ and $r_2(\sigma)$ are smooth functions described below. We denote the image of this embedding by $H \subset \Delta_1\times \Delta_2$, and usually identify $H = D^2\times S^1$ via $i$. Schematically, $H$ is described by the diagram below.
\[
\def\svgwidth{3in}
\begingroup%
  \makeatletter%
  \providecommand\color[2][]{%
    \errmessage{(Inkscape) Color is used for the text in Inkscape, but the package 'color.sty' is not loaded}%
    \renewcommand\color[2][]{}%
  }%
  \providecommand\transparent[1]{%
    \errmessage{(Inkscape) Transparency is used (non-zero) for the text in Inkscape, but the package 'transparent.sty' is not loaded}%
    \renewcommand\transparent[1]{}%
  }%
  \providecommand\rotatebox[2]{#2}%
  \newcommand*\fsize{\dimexpr\f@size pt\relax}%
  \newcommand*\lineheight[1]{\fontsize{\fsize}{#1\fsize}\selectfont}%
  \ifx\svgwidth\undefined%
    \setlength{\unitlength}{223.95714227bp}%
    \ifx\svgscale\undefined%
      \relax%
    \else%
      \setlength{\unitlength}{\unitlength * \real{\svgscale}}%
    \fi%
  \else%
    \setlength{\unitlength}{\svgwidth}%
  \fi%
  \global\let\svgwidth\undefined%
  \global\let\svgscale\undefined%
  \makeatother%
  \begin{picture}(1,0.79772523)%
    \lineheight{1}%
    \setlength\tabcolsep{0pt}%
    \put(0,0){\includegraphics[width=\unitlength,page=1]{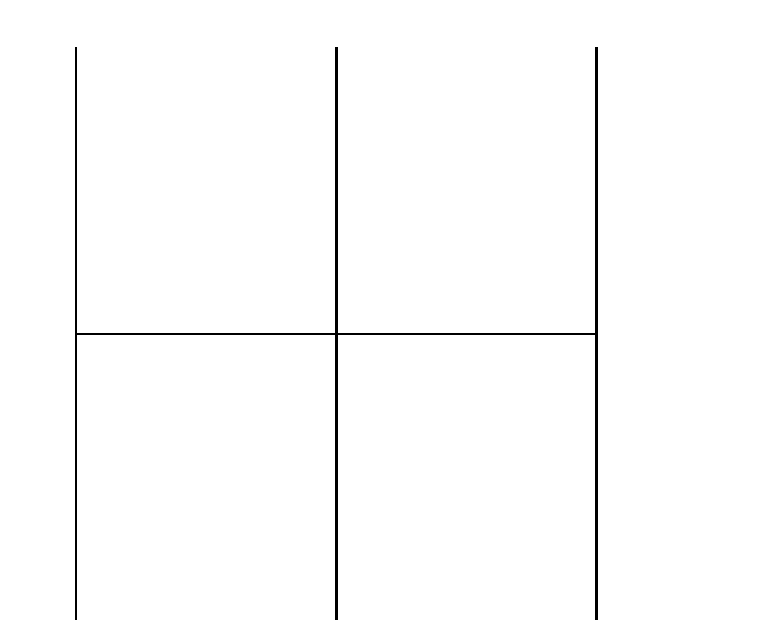}}%
    \put(0.43264762,0.77023663){\makebox(0,0)[lt]{\lineheight{1.25}\smash{\begin{tabular}[t]{l}$\Delta_2$\end{tabular}}}}%
    \put(0.80102169,0.36837406){\makebox(0,0)[lt]{\lineheight{1.25}\smash{\begin{tabular}[t]{l}$\Delta_1$\end{tabular}}}}%
    \put(0.23171632,0.20093129){\makebox(0,0)[lt]{\lineheight{1.25}\smash{\begin{tabular}[t]{l}$H$\end{tabular}}}}%
    \put(0.01250839,0.46883971){\makebox(0,0)[lt]{\lineheight{1.25}\smash{\begin{tabular}[t]{l}$r_0'$\end{tabular}}}}%
    \put(0.01345913,0.66934597){\makebox(0,0)[lt]{\lineheight{1.25}\smash{\begin{tabular}[t]{l}$r_0$\end{tabular}}}}%
    \put(0.59254337,0.52673938){\makebox(0,0)[lt]{\lineheight{1.25}\smash{\begin{tabular}[t]{l}$\sigma_0$\end{tabular}}}}%
    \put(0.68500467,0.60822247){\makebox(0,0)[lt]{\lineheight{1.25}\smash{\begin{tabular}[t]{l}$\sigma_1$\end{tabular}}}}%
    \put(0,0){\includegraphics[width=\unitlength,page=2]{HEmbedding.pdf}}%
  \end{picture}%
\endgroup%

\]
The functions $r_1(\sigma)$, $r_2(\sigma)$ are selected as follows. We suppose the constant $r_0\in (0,1)$ is given as previously.  Fix constants $0< \sigma_0<\sigma_1< r_0< 1$, and assume $\epsilon$ is chosen small enough to satisfy:
\begin{equation}\label{epsilonreq}
\epsilon < \min\{\sigma_0, \sigma_1 - \sigma_0, 1/2n\sigma_1^2\}.
\end{equation}
Now require that $r_1(\sigma)$ is a smooth function satisfying
\[
r_1(\sigma) = \left\{\begin{array}{ll} k\sigma & 0\leq \sigma\leq \sigma_0 \\ 1 & \sigma_1-\epsilon\leq \sigma\leq 1\end{array}\right.
\] 
and otherwise increasing, where $k>0$ is a constant with $k\sigma_0 < 1$. Choosing an additional constant $r_0'$ with  $0< r_0'< r_0 < 1$, we select $r_2(\sigma)$ to be a smooth, strictly increasing function such that
\begin{align*}
r_2(0) &= r_0'\\
r_2(\sigma) & \leq r_0 \quad 0\leq \sigma\leq \sigma_1\\
r_2(\sigma) &= \sigma \quad \sigma_1\leq \sigma \leq 1
\end{align*}

The hypersurface $H$ separates $\Delta_1\times \Delta_2$ into two subsets $U_1$ and $U_2$, where $U_1$ is a neighborhood of $\Delta_1\times 0$ and $U_2$ is the complementary region. Identifying $\Delta_1\times\Delta_2$ with a subset of $Z$ as above, we can think of $U_1\subset Z$, and define $Z' = Z - U_1$. Of course $Z'$ is independent of the choices of the constants and embeddings in the above, up to diffeomorphism. 

The boundary $Y' = \partial Z'$ is obtained from $Y$ by removing $nbd(K)$ and replacing it by $H$. The coordinates on $nbd(K)$ and $H$ are related via the embedding $i$ and map $F$ above. Explicitly, on the subset $\{\sigma\geq \sigma_1\}\subset H$ we have the transformation
\begin{equation}\label{Htransf}
(\sigma, \alpha, \beta) \mapsto (\kappa_1 = \alpha, \rho = r_2(\sigma), \kappa_2 = \beta + n\alpha),
\end{equation}
where in fact, in this range we have $r_2(\sigma) = \sigma$.

Observe: The coordinates $(\kappa_1, \rho, \kappa_2)$ on $nbd(K)$ are positively oriented for $Y$. The coordinates $(\sigma, \alpha, \beta)$ on $H$ correspond to the orientation on $H$ induced as the boundary of $U_1$. On the region $\sigma\geq \sigma_1$ these are opposite orientations, which is reflected in the fact that the transformation above is orientation-reversing. 

Finally, recall that the open book on $Y$, restricted to $nbd(K)$, is given by the map $\pi(\kappa_1, \rho, \kappa_2) = \kappa_1$. By the above, this corresponds to the map $\pi': (\sigma, \alpha, \beta)\mapsto \alpha$, which extends to the complement of $0\times S^1$ in $H = D^2\times S^1$ as an open book with a binding component $B' = 0\times S^1$. Together with the remainder of the original open book on $Y$ away from $nbd(K)$ this defines an open book $({\Sigma}', \psi')$ on $Y'$, and we let $\xi'$ be the (unique isotopy class of) contact structure on $Y'$ supported by this open book. Observe that the page of this open book is the surface ${\Sigma}' = {\Sigma} - D$. We will see that the monodromy $\psi'$ is obtained from the restriction of $\hat\psi$ to ${\Sigma}'$ by composing with $n$ right-handed Dehn twists around a curve parallel to $\partial D$. Before doing so, we verify that the restriction of $\omega$ to $Z'$ indeed gives a weak filling of the contact structure supported by $({\Sigma}', \psi')$.

Consider the following Liouville vector field on $\Delta_1\times \Delta_2$, defined away from $\Delta_1\times 0$:
\[
X = {1\over 2}r_1\, \partial_{r_1} + ({1\over 2}r_2 - {c \over {r_2}})\,\partial_{r_2},
\]
where $c>0$ is a constant. Note that $X$ is directed radially outward in the direction of $\Delta_1$, and radially inward along $\Delta_2$ so long as $r_2< \sqrt{2c}$. In particular, so long as $c> {1\over 2}r_0^2$, we have that $X$ is transverse to $H$ and directed into $U_1$ (i.e., directed out of $Z'$), and we fix $c$ satisfying this condition.

The primitive for $\omega_0$ associated to $X$ is
\[
\iota_X\omega_0 = {1\over 2} r_1^2\,d\theta_1 + ({1\over 2} r_2^2 - c) \, d\theta_2,
\]
which pulls back to $H$ as the form
\[
\lambda_0= i^*(\iota_X\omega_0) = {1\over 2} r_1(\sigma)^2 \, d\alpha + ({1\over 2}r_2(\sigma)^2 - c) \, d\beta.
\]
Now, recall that $(\sigma, \alpha, \beta)$ are coordinates that orient $H$ as the boundary of $U_1$, not the boundary of $Z'$. Using the positive coordinate system $(\sigma, \alpha, \beta')$, where $\beta' = -\beta$, we see that:
\begin{itemize}
\item The form $d\lambda_0 = r_1r_1' \, d\sigma\wedge d\alpha - r_2r_2'\,d\sigma\wedge d\beta'$, restricted to a page of the open book $\pi'(\sigma,\alpha, \beta') = \alpha$, is nondegenerate. Indeed, the  tangent to the page is given by $\ker(d\alpha)$, co-oriented by $d\alpha$, and we have 
\[
d\lambda_0 \wedge d\alpha = r_2r_2' d\sigma\wedge d\alpha\wedge d\beta'.
\]
Since $r_2(\sigma)$ is strictly positive and increasing, this is positive. 
\item In particular a positive ordered basis for the tangent space to a page is given by $\{\partial_{\beta'}, \partial_\sigma\}$ (since this basis followed by $\partial_\alpha$ is positive for $Y'$). Since $\partial_\sigma$ is an inward-pointing vector along the boundary of the page, it follows that $\partial_{\beta'}$ orients the boundary of the page. 
\item We have $\lambda_0(\partial_{\beta'}) = c - {1\over 2}r_2(\sigma)^2$. Since $r_2(0) = r_0' < r_0$ and $c > {1\over 2}r_0^2$, this is positive along the binding $B' = \{\sigma = 0\}$. 
\end{itemize}

Thus, $\lambda_0$ defines a contact structure on $H$ that is compatible with the open book we have introduced on $H$. It remains to ``patch'' the contact structure $\ker(\lambda_0)$ together with the given contact structure on $Y$; recall that the latter is given by the kernel of the form $\lambda_Y = d\kappa_1 + \epsilon\rho^2\, d\kappa_2$ on $nbd(K)$, a subset of which is identified with the region $\{\sigma\geq \sigma_1\}\subset H$. Concretely, the identification is given by a map $j: \{\sigma\geq \sigma_1\}\to \{\rho \geq \sigma_1\}$ where
\[
j: (\sigma, \alpha, \beta) \mapsto (\kappa_1 = \alpha, \rho = \sigma, \kappa_2 = -\beta' + n\alpha).
\]
Thus we get a form $\lambda_{Y'} = j^*\lambda_Y$ given by
\begin{align}
\lambda_{Y'} &= d\alpha + \epsilon\sigma^*(-d\beta' + nd \alpha)\nonumber\\
&= -\epsilon \sigma^2\,d\beta' + (1+\epsilon n\sigma^2)\,d\alpha \qquad (\sigma\geq \sigma_1).\label{region1}
\end{align}
On the other hand, we have seen that on $H$,
\[
\lambda_0 = (c-{1\over 2}r_2(\sigma)^2)\,d\beta' + {1\over 2}r_1(\sigma)^2\,d\alpha,
\]
which for $\sigma\leq \sigma_0$ reduces to
\begin{equation}\label{region2}
\lambda_0 = (c-{1\over 2}r_2(\sigma)^2)\,d\beta' + {1\over 2}k^2\sigma^2\,d\alpha \qquad (\sigma\leq \sigma_0)
\end{equation}
Now, a smooth 1-form on $H$ given by $\tilde\lambda = f(\sigma)\, d\beta' + g(\sigma)\, d\alpha$ is a positive contact form if the curve $(f(\sigma), g(\sigma))$ in the plane winds counterclockwise about the origin in the sense that $fg' - gf' >0$ for all $\sigma$. Bearing in mind the desired behavior of $f$ and $g$ as in \eqref{region1} and \eqref{region2}, we first choose $f$  such that:
\begin{align*}
\begin{array}{ll}f(\sigma) =  c-{1\over 2}r_2(\sigma)^2 &(\sigma\leq \sigma_0) \\ f(\sigma ) = -\epsilon \sigma^2 & (\sigma\geq \sigma_1) \\ f'(\sigma) < 0 & (\sigma > 0) \end{array} 
\end{align*}
Indeed, recall that $r_2(\sigma)$ is chosen so that $r_0' \leq r_2(\sigma) \leq r_0 < \sqrt{2c}$ for $0\leq \sigma\leq \sigma_1$ (and $r_2(\sigma)$ is increasing), so a smooth interpolation exists as required. Moreover, we may suppose that the only zero of $f$ is at the value $\sigma = \sigma_1 - \epsilon$, and since $\epsilon < \sigma_1$ we can arrange that for $\sigma \in [\sigma_1 - \epsilon, \sigma_1]$ we have 
\begin{equation}\label{fprimecond}
|f'(\sigma)|\geq \epsilon\sigma_1.
\end{equation}


For the function $g$, since $k\sigma_0 < 1$ we can arrange
\[
\begin{array}{ll} g(\sigma) = {1\over 2}k^2\sigma^2 & (\sigma\leq \sigma_0) \\ g(\sigma) = 1+\epsilon n\sigma^2 & (\sigma\geq \sigma_1-\epsilon) \\ g'(\sigma) > 0 & (\sigma > 0) \end{array}
\]

By construction, $\tilde\lambda$ agrees with $\lambda_0$ on $\{\sigma\leq \sigma_0\}$ and with $\lambda_{Y'}$ on $\{\sigma\geq \sigma_1\}$. The winding condition holds on $\{\sigma_0\leq\sigma\leq\sigma_1-\epsilon\}$ since $fg'>0$ and $gf'<0$ in that interval. Finally, while $f(\sigma), f'(\sigma)<0$ for $\sigma \in (\sigma_1-\epsilon, \sigma_1]$, it is easy to check that the constraints \eqref{epsilonreq}, \eqref{fprimecond} imply the winding condition holds in this interval as well.


 Hence $\tilde\lambda$ is a positive contact form on $H$, and together with $\lambda_Y$ on $Y- nbd(K) = Y' - H$ gives a contact form on all of $Y'$ that agrees with the form $\lambda_Y$ away from $H$.

\begin{proposition}\label{weakfillprop} The contact structure $\xi' =\ker(\tilde\lambda)$ is compatible with the open book on $Y'$ constructed above, and is weakly filled by the symplectic structure on $Z'$ obtained by the restriction of $\omega$.
\end{proposition}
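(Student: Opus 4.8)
The plan is to treat the two assertions in turn, noting that much of the compatibility claim has already been recorded while constructing $\tilde\lambda$. Recall $\tilde\lambda = f(\sigma)\,d\beta' + g(\sigma)\,d\alpha$ on $H$, with the winding condition $fg' - gf' > 0$ guaranteeing that $\tilde\lambda$ is a positive contact form. To confirm it is \emph{compatible} with the open book on $Y'$ (whose binding is $B'=\{\sigma=0\}$ and whose pages are the level sets of $\alpha$), I would restrict $d\tilde\lambda = f'\,d\sigma\wedge d\beta' + g'\,d\sigma\wedge d\alpha$ to a page $\{\alpha = \mathrm{const}\}$, obtaining $f'\,d\sigma\wedge d\beta'$, and observe that this is a positive area form for the page orientation $(\partial_{\beta'},\partial_\sigma)$ precisely because $f'<0$. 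Positivity on the oriented binding follows from $\tilde\lambda(\partial_{\beta'}) = f(0) = c - \tfrac12 (r_0')^2 > 0$; and away from $H$, $\tilde\lambda = \lambda_Y$ is compatible with the portion of the original open book surviving in $(\Sigma',\psi')$.

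For the filling assertion I would verify that $\partial Z' = Y'$ as oriented manifolds and that $\omega|_{\xi'}>0$, arguing region by region. On the unmodified part $Y - nbd(K)$ we have $Z'=Z$, $\omega'=\omega$, and $\xi'=\xi$, so both conditions are inherited from the hypothesis that $(Z,\omega)$ weakly fills $(Y,\xi)$. The genuinely new boundary lies in $H$, and since $H\cap\{\sigma\leq\sigma_1\}$ sits inside the region $\{r_2\leq r_0\}\subset U_\Delta$ on which $\omega$ is identified with the standard split form $\omega_0$, I may compute with $\omega_0$ there; the remaining part $H\cap\{\sigma\geq\sigma_1\}$ coincides with a piece of $nbd(K)\subset Y$ on which $\xi'=\xi$, so the filling condition is again inherited.

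To obtain $\omega_0|_{\xi'}>0$ on $H\cap\{\sigma\leq\sigma_1\}$ I would split off the binding. On $\{\sigma\leq\sigma_0\}$ we have $\tilde\lambda = \lambda_0 = i^*\iota_X\omega_0$ with $X$ Liouville, so $i^*\omega_0 = d\lambda_0 = d\tilde\lambda$ on $TH$, whence $\omega_0(u,v) = d\tilde\lambda(u,v)>0$ for all $u,v\in\xi'$; this handles positivity right down to the binding, where a coordinate frame would otherwise degenerate. For $\sigma>0$ I would instead take the $d\tilde\lambda$-positive frame $e_1 = \partial_\sigma$, $e_2 = f\partial_\alpha - g\partial_{\beta'}$ (positive since $d\tilde\lambda(e_1,e_2)=fg'-gf'>0$), push it forward through $i$, and evaluate to obtain
\[
\omega_0(e_1,e_2) = r_1 r_1' f + r_2 r_2' g.
\]
Here $r_2 r_2' g>0$ for $\sigma>0$, while $r_1 r_1' f\geq 0$ because $r_1'\geq 0$ and $r_1'$ vanishes on $\{f<0\}=\{\sigma>\sigma_1-\epsilon\}$ (there $r_1\equiv 1$); hence the sum is positive. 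For the orientation matching I would note that $\tilde\lambda\wedge d\tilde\lambda = (fg'-gf')\,d\sigma\wedge d\alpha\wedge d\beta'$, so the contact orientation is $(\sigma,\alpha,\beta')$, and then confirm that the outward normal $X$ makes $(X,\partial_\sigma,\partial_\alpha,\partial_{\beta'})$ positive for $\tfrac12\omega_0^2$: the determinant works out to $\tfrac12 r_1 r_2' + r_1'\!\left(c/r_2 - \tfrac12 r_2\right)$, which is positive using $c>\tfrac12 r_0^2$ (so $c/r_2>\tfrac12 r_2$ wherever $r_1'\neq 0$).

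The step I expect to be most delicate is arranging $\omega|_{\xi'}>0$ to hold uniformly across the three regimes at once---reconciling the contact-type argument near the binding with the explicit frame computation for $\sigma>0$, and keeping all of the orientation bookkeeping (the sign in $\beta'=-\beta$, the inward/outward direction of $X$, and the page orientation) mutually consistent. Each individual inequality is routine once the frames and signs are pinned down; the real work lies in checking that the constants $\epsilon, c, \sigma_0, \sigma_1, r_0, r_0'$ fixed earlier make every one of these inequalities strict simultaneously.
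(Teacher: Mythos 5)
Your proposal is correct and follows essentially the same route as the paper: the same compatibility check ($f'<0$ gives the area form on pages, $\tilde\lambda=\lambda_0$ handles the binding), the Liouville-field argument on $\{\sigma\leq\sigma_0\}$, and the identical key computation $r_1r_1'f+r_2r_2'g>0$ with the same case analysis on where $f$ changes sign versus where $r_1'$ vanishes. The only cosmetic difference is that you evaluate $\omega_0$ on an explicit frame of $\xi'$ while the paper checks positivity of the $3$-form $i^*\omega_0\wedge\tilde\lambda$, and you spell out the boundary-orientation bookkeeping that the paper delegates to its earlier setup.
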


\begin{proof} Compatibility with the open book means that $\tilde\lambda$ is positive on the oriented boundary of the pages, and that $d\tilde\lambda$ is an area form on the interior of the pages. Since $\tilde\lambda = \lambda_Y$ away from $H$, and $\lambda_Y$ is compatible with the original open book, we need only check these conditions in $H$. Positivity on the new binding component $B'\subset H$ follows since $\tilde\lambda = \lambda_0$ near $B'$, and we have already checked this condition for $\lambda_0$. 

For positivity of $d\tilde\lambda$ on the pages, write
\[
d\tilde\lambda = f'(\sigma)\,d\sigma\wedge d\beta' + g'(\sigma)\, d\sigma\wedge d\alpha.
\]
Hence $d\tilde\lambda \wedge d\alpha = -f'(\sigma)\, d\sigma\wedge d\alpha\wedge d\beta' >0$ since $f'< 0$, and the claim follows since the oriented tangents to the page are $\ker(d\alpha)$.

The weak filling claim is the assertion that $\omega>0$ on the oriented contact planes $\ker(\tilde\lambda)$. This is true by assumption away from $H$, and indeed on $\{\sigma\geq r_0\}$. In $\{r_2\leq r_0\}\subset \Delta_1\times \Delta_2\subset Z$ the symplectic form is our standard one $\omega_0$, so to check positivity it suffices to show that $i^*\omega_0 \wedge \tilde\lambda$ is positive, where $i: H\to \Delta_1\times\Delta_2$ is the embedding from previously. We calculate using \eqref{incldef} (and $\beta' = -\beta$):
\begin{align*}
i^*\omega_0 \wedge \tilde\lambda & =( r_1(\sigma)r_1'(\sigma) \, d\sigma\wedge d\alpha - r_2(\sigma) r_2'(\sigma)\, d\sigma\wedge d\beta') \wedge (f(\sigma)\, d\beta' + g(\sigma)\, d\alpha)\\
&= (r_1(\sigma)r_1'(\sigma) f(\sigma) + r_2(\sigma) r_2'(\sigma)g(\sigma))\, d\sigma\wedge d\alpha\wedge d\beta'.
\end{align*}
Now, this is automatically positive when $\sigma\leq \sigma_0$, since in that region $\tilde\lambda = \lambda_0$ is induced by a Liouville field for $\omega$. More generally, recall $r_2$ and $r_2'$ are positive for $\sigma>0$, and $g(\sigma)>0$ for all $\sigma$ as well. We have $r_1(\sigma)>0$ and $r_1'(\sigma)>0$ when $\sigma<\sigma_1-\epsilon$, and on this region $f(\sigma)>0$. When $\sigma\geq \sigma_1-\epsilon$ the function $f$ is negative, but $r_1'(\sigma) = 0$ in this interval. Hence the first term in parentheses above is nonnegative while the second is positive, finishing the proof.

\end{proof}

The proposition above proves Theorem \ref{scoopthm} except for the claim that the contact manifold $(Y',\xi')$ is obtained from $(Y,\xi)$ by positive contact surgery on a Legendrian approximation of $K$. We turn to that question next.

\subsection{Monodromy and framing}

Recall that we have chosen a lift of the monodromy $\psi\in\Mod({\Sigma})$ of a given open book on $Y$ to an element $\hat\psi\in\Mod({\Sigma}, D) \cong \Mod({\Sigma}')$ (all mapping class groups are implicitly taken relative to the boundary), which entails a choice of framing of the 1-braid $K$ that we call the ``page framing'' even though, strictly, this framing is not determined by the open book or even the representation of $K$ as an element of $\Mod({\Sigma}, p)$. We have denoted the framing induced by the disk $\Delta\subset Z$, relative to the page framing, by $n$, and wish to understand the monodromy of the open book on $Y'$ determined by the construction above. To do so we must be careful with the notion of ``relative monodromy.''

Let $\pi: M\to S^1$ be a fiber bundle projection, where we think of $S^1 = [0,1]/(0\sim 1)$ with quotient map $q:[0,1]\to S^1$. If $F = \pi^{-1}([0])$ is the fiber over the base point, then the bundle $\tilde\pi: q^*M\to [0,1]$ over $[0,1]$ is isomorphic to $[0,1]\times F$ by some bundle isomorphism $\varphi: q^*M\to [0,1]\times F$ that we may assume to be the identity on $F = \{0\}\times F$. Observe that by the construction of an induced bundle, the fibers of $q^*M$ over $0$ and $1$ are canonically identified with $F = \pi^{-1}([0])$. The monodromy of $\pi$ is then defined to be the composition
\[
 F = \{1\}\times F \to\tilde\pi^{-1}(1) = \pi^{-1}([0]) =  F
\]
where the arrow is determined by $\varphi$. If $\mu$ is this composition, then $\mu$ is well-defined up to conjugation (corresponding to the choice of identification of the fiber, which we will assume to be fixed from now on) and isotopy (corresponding to the choice of $\varphi$). The bundle $M$ is then isomorphic to the mapping torus $M(\mu) = [0,1]\times F / (1, x)\sim (0, \mu(x))$.

Now suppose $B\subset F$ is a subset and $\mu: F\to F$ is a diffeomorphism that is the identity on $B$. (In our situation, $F$ will be a compact surface and $B$ a collar of its boundary.) Then the mapping torus $M(\mu)$ contains a sub-bundle with fiber $B$, namely $[0,1]\times B / (0,b)\sim(1,b)$. In particular, the map $S^1\times B\to M(\mu)$ sending $(t, b)$ to the same element considered in $M(\mu)$ is a canonical trivialization of this sub-bundle. Moreover, modifying $\mu$ by an isotopy that is fixed on $B$ to another diffeomorphism $\mu'$ gives rise to a bundle isomorphism $M(\mu)\to M(\mu')$ that respects the corresponding trivializations of the sub-bundles. In other words, an element of $\Mod(F,B)$ gives rise to a well-defined bundle and sub-bundle pair, where the sub-bundle is trivialized. Conversely, suppose $\pi: M\to S^1$ is a fiber bundle with fiber $F = \pi^{-1}([0])$, and $N\subset M$ is a sub-bundle  such that $N\cap F = B$. Assume also that $N$ is a trivial bundle, and that a trivialization $S^1\times B\to N$ has been chosen. Then, so long as $B\subset F$ is a reasonable subset (e.g. a closed submanifold), one can choose the trivialization $\varphi$ of $q^*M$ above so as to restrict to the given trivialization of $q^*N$. In particular we obtain a monodromy diffeomorphism that is the identity on $B$, well-defined up to isotopy rel $B$: that is, an element of $\Mod(F, B)$ that we call the relative monodromy.

The point of the preceding discussion is that relative monodromy is determined not just by the bundle $\pi: M\to S^1$ with its trivial sub-bundle, but by the additional choice of a trivialization of the sub-bundle. Note that the notion of monodromy for an open book decomposition is, strictly, an instance of relative monodromy. An example that is relevant for our situation is the following. Let $A = [0,1]\times S^1$ be an annulus, with oriented coordinates $(\tau, \phi)$ (we take $\phi \in [0,1]/(0\sim 1)$ as usual). For an integer $k$, let $\psi_k: A\to A$ be the diffeomorphism $\psi_k(\tau, \phi) = (\tau, \phi - k\tau)$, which is isotopic rel $\partial A$ to the $k$-th power of a right-handed Dehn twist about the core circle of $A$. (Strictly, it is more appropriate to replace $-k\tau$ in this definition by a monotonic smooth function equal to $0$ for $\tau$ near $0$ and $-k$ for $\tau$ near $1$.) The mapping torus $M(\psi_k)$ is then described concretely as
\[
M(\psi_k) = [0,1]\times [0,1]\times S^1 / (1, \tau, \phi)\sim (0, \tau, \phi-k\tau),
\]
with bundle projection $[s, \tau, \phi]_{\psi_k} \mapsto s$. Here we write $[\cdot]_{\psi_k}$ for the equivalence class modulo the relation induced by $\psi_k$ as above. As a bundle over $S^1$, the trivial mapping torus $M(\id)$ is isomorphic to $M(\psi_k)$ by the map 
\begin{align*}
G: M(\id) & \to M(\psi_k)\\
[s, \tau, \phi]_{\id} &\mapsto [s, \tau, \phi+k\tau s]_{\psi_k}.
\end{align*}
In particular, we can think of $M(\psi_k)$ as just the trivial bundle $M(\id)$, but with different trivialization on the boundary to account for the relative monodromy. Indeed, the trivialization of $\partial M(\psi_k)$ is given by the obvious map $[0,1]\times \partial A/\sim_{\psi_k}\to S^1\times\partial A$ sending $[s, j, \phi]_{\psi_k}$ to $(s, \{j\}\times \phi)$ for $j = \{0,1\}$ (thinking of $\partial A = \{0,1\}\times S^1$). Composing with $G$, the corresponding trivialization $\partial M(\id)\to S^1\times \partial A$ maps
\begin{align*}
[s, 0, \phi]_{\id} &\mapsto (s, \{0\}\times\phi)\\
[s, 1, \phi]_{\id} & \mapsto (s, \{1\}\times (\phi + ks)).
\end{align*}
Put another way, the trivial bundle $M(\id)$, equipped with these boundary trivializations, has relative monodromy isotopic to $k$ right Dehn twists. 

Returning to the setting of open book decompositions, to say that $Y$ is equipped with an open book decomposition $({\Sigma}, \psi)$ means that the complement of a small neighborhood of the binding $B$ is identified with the mapping torus ${\Sigma}_\psi$. Moreover, the bundle structure on $Y-nbd(B)$ is trivialized at the boundary using the meridians of $B$.  Likewise, in the setting from previously, the choice of lift $\hat\psi\in\Mod({\Sigma}, D)$ gives rise to a trivialization $nbd(K) \cong S^1\times D$ that we have recorded in the coordinate system $(\kappa_1, \rho, \kappa_2)$. 

The manifold $Y'$ is obtained by replacing $nbd(K)$ by $H \cong D^2\times S^1$ in such a way that near $\partial H$ the coordinates $(\sigma, \alpha, \beta)$ on $H$ are related to those on $nbd(K)$ via the transformation \eqref{Htransf}:
\[
(\sigma, \alpha, \beta') \mapsto (\kappa_1 = \alpha, \rho = \sigma, \kappa_2 = \beta + n\alpha).
\]
The open book $\pi'$ on $Y'$ is described on $H$ by the map $\pi'(\sigma, \alpha,\beta) = \alpha$. By a slight abuse, we think of this as defined for $\sigma\in [0,1]$ (not just the half-open interval), so the fiber of $\pi'$ is an annulus $A = [0,1]\times S^1$ described by coordinates $(\sigma, \beta)$. In other words, after deleting a small open neighborhood of the binding $B' = \{\sigma = 0\}$ and recoordinatizing, the new open book is described in this coordinate system as a trivial mapping torus $S^1\times A$ with coordinates $(\alpha, \sigma, \beta)$. (Note that this coordinate system reverses the orientation on $H$, which as we have seen makes its orientation consistent with that of $Y'$.) On the boundary component at $\sigma = 0$, the natural trivialization of this bundle given by projection to $\alpha$ and $\beta$ coordinates corresponds to the trivialization given by meridians of $B'$, as required for an open book. At $\sigma = 1$, the trivialization is dictated by the coordinate transformation above, and is the map $S^1\times \{0\}\times S^1\to S^1\times S^1$ given by $(\alpha, 1, \beta) \mapsto (\alpha, \beta + n\alpha)$. Comparing with the example above, we obtain:

\begin{proposition}\label{monodromyprop} The open book $({\Sigma}', \psi')$ on $Y'$ obtained by the procedure above has monodromy described as follows. Write ${\Sigma}' = ({\Sigma} - D)\cup A$ where $A$ is an annulus glued to ${\Sigma}$ along $\partial D$. The monodromy $\psi'$ is equal to the chosen lift $\hat\psi$ on ${\Sigma} - D$, and on $A$ is given by the composition of $n$ right-handed Dehn twists around the core of $A$. 
\end{proposition}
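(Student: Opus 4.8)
The plan is to apply the relative-monodromy framework developed in the preceding discussion essentially verbatim, treating the two pieces of ${\Sigma}'$ separately and then gluing along their common boundary. The first observation is that the construction of $Y'$ alters the open book of $Y$ only inside $nbd(K)$, which is excised and replaced by $H$. On $Y - nbd(K) = Y' - H$ the bundle projection $\pi'$ agrees with the original projection $\pi$, and the boundary trivializations there (the meridians of the binding $B$ of $Y$, together with the trivialization of $nbd(K)$ recorded by the coordinates $(\kappa_1, \rho, \kappa_2)$ coming from $\hat\psi$) are unchanged. Hence the relative monodromy of $\pi'$, restricted to ${\Sigma} - D$, is simply the restriction of $\hat\psi$, which accounts for the assertion on ${\Sigma} - D$.

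The substance of the proof is the computation on the new annular piece. I would first record that over $H$ the open book $\pi'$ is the map $(\sigma, \alpha, \beta)\mapsto \alpha$ with binding $B' = \{\sigma = 0\}$; after deleting a neighborhood of $B'$, the restriction $\pi'|_H$ is a trivial bundle over the $\alpha$-circle whose fiber is the annulus $A$ with coordinates $(\sigma, \beta)$. Its relative monodromy is pinned down by the two boundary trivializations: at $\sigma = 0$ the natural projection to $(\alpha, \beta)$ is the meridian trivialization of $B'$ required for an open book, while at $\sigma = 1$ the trivialization is forced by the gluing \eqref{Htransf} to $nbd(K)$, namely $(\alpha, 1, \beta)\mapsto (\kappa_1 = \alpha,\ \kappa_2 = \beta + n\alpha)$.

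Next I would match this directly against the model computation of the digression, under the dictionary $s\leftrightarrow\alpha$, $\tau\leftrightarrow\sigma$, $\phi\leftrightarrow\beta$, and $k = n$. There the trivial mapping torus $M(\id)$, carrying the standard trivialization at one end and the trivialization $[s,1,\phi]_{\id}\mapsto (s,\phi + ks)$ at the other, was shown to have relative monodromy isotopic to $k$ right-handed Dehn twists about the core circle. Since our outer trivialization is precisely $(\alpha,\beta)\mapsto(\alpha,\beta + n\alpha)$, matching the model with $k = n$, the relative monodromy on $A$ is $n$ right-handed Dehn twists around the core of $A$. Finally, the two local descriptions agree along their common boundary $\partial D$, so they assemble into a single monodromy $\psi'$ on ${\Sigma}' = ({\Sigma} - D)\cup A$ that equals $\hat\psi$ on ${\Sigma} - D$ and is $n$ right Dehn twists on $A$, as claimed.

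The step I expect to require the most care is confirming the sign and handedness of the twists. Passing to the coordinates $(\sigma,\alpha,\beta)$ reverses the orientation of $H$ (this is exactly why $\beta' = -\beta$ was introduced when checking the contact condition), so I would track the orientation conventions consistently from the model computation through the actual gluing, to be sure that $(\alpha,\beta)\mapsto(\alpha,\beta + n\alpha)$ yields $+n$ right-handed twists rather than left-handed ones or the wrong multiple. That $n > 0$ was arranged in the setup is what selects right-handed (positive) Dehn twists, and verifying the compatibility of this choice with the orientation bookkeeping is the one point that genuinely needs to be checked rather than merely quoted from the digression.
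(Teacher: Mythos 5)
Your proposal is correct and follows essentially the same route as the paper: the paper's own argument is exactly the identification of $\pi'|_H$ (after deleting a neighborhood of $B'$) with the trivial mapping torus $S^1\times A$, the reading-off of the two boundary trivializations from the meridians of $B'$ and from the gluing map \eqref{Htransf}, and the comparison with the model computation in the digression with $k=n$. Your closing remark about tracking the orientation reversal $\beta'=-\beta$ is the right point to be careful about, and it matches the paper's own parenthetical note that the $(\alpha,\sigma,\beta)$ coordinate system reverses the orientation of $H$ so as to be consistent with that of $Y'$.
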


Recall that there are various choices for the lift $\hat\psi$, related to each other by Dehn twists around $\partial D$. Adding such a Dehn twist to $\hat\psi$ changes the corresponding framing on $K$, however, in particular adding a right Dehn twist increases the ``page framing'' by 1. The framing of $K$ induced by $\Delta$ is independent of this choice, but the integer $n$ represents the difference between the framing given by $\Delta$ and the page framing. Hence adding a right twist along $\partial D$ to $\hat\psi$ means that $n$ {\it decreases} by 1, and the monodromy $\psi'$ described in the proposition is well-defined.

\begin{lemma} Let  $K\subset Y$ be a positively transverse knot in a contact manifold $(Y, \xi)$, and let $\lambda$ be a chosen framing of $K$. Then there exists an open book on $Y$ supporting $\xi$, such that $K$ is transversely isotopic to a binding component, and such that the framing on $K$ induced by the page of the open book is smaller than $\lambda$.
\end{lemma}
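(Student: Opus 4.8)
The plan is to reduce the statement to the existence result already recorded in Lemma~\ref{legsurglemma}, combined with the fact that a Legendrian approximation of $K$ can be negatively stabilized arbitrarily often without changing its positive transverse pushoff, while its Thurston--Bennequin framing decreases without bound. The whole point is to drive the page framing of a binding component to $-\infty$ using moves that preserve $\xi$ and the transverse isotopy class of $K$.

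First I would choose a Legendrian knot $\K$ whose positive transverse pushoff is transversely isotopic to $K$. Passing to the $k$-fold negative stabilization $\K^{(k)}$ does not alter the positive transverse pushoff (so it remains $K$), but it lowers the invariant to $\tb(\K^{(k)}) = \tb(\K) - k$. Since $\lambda$ is a fixed framing and the framings of $K$ form a $\zee$-torsor, I would fix $k$ large enough that $\tb(\K) - k - 1 < \lambda$, and then rename $\K^{(k)}$ as $\K$; thus we may assume from the outset that $\tb(\K) - 1 < \lambda$. Applying Lemma~\ref{legsurglemma}(2) to this $\K$ produces, after a suitable stabilization of an adapted open book, an open book $({\Sigma}, \psi)$ supporting $\xi$ in which the negative stabilization $\K^-$ lies on a page and is parallel to a binding component $B$, where $B$ is isotopic to a positive transverse pushoff of $\K^-$. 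Because negative stabilization preserves the positive transverse pushoff, $B$ is transversely isotopic to $K$, which gives the first required conclusion.

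It then remains to identify the page framing of $B$. Since $B$ is parallel, within the page, to the Legendrian curve $\K^-$, the two cobound a collar annulus contained in the page; hence the framing induced on $B$ by the page agrees, under this annulus, with the framing induced on $\K^-$. The latter curve is Legendrian and lies on a page of an open book supporting $\xi$, so its page framing equals its contact framing $\tb(\K^-) = \tb(\K) - 1$. Transporting back to $K$ via the transverse isotopy $B \simeq K$, the page framing of $B$ equals $\tb(\K) - 1$, which is smaller than $\lambda$ by our choice, completing the proof.

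The step I expect to require the most care is the identification of the page framing of the binding component $B$ with the Thurston--Bennequin framing $\tb(\K^-)$ of the parallel Legendrian on the page: this rests on the standard fact that a Legendrian lying on a page of a compatible open book has page framing equal to its contact framing, together with the transport of framings across the collar annulus and the transverse isotopy, all of which must be tracked consistently within the $\zee$-torsor of framings so that ``smaller than $\lambda$'' is meaningful. Once that identification and the monotone behavior $\tb(\K^{(k)}) \to -\infty$ under negative stabilization (with the positive transverse pushoff fixed) are in hand, the remainder is bookkeeping. This lemma is exactly what licenses the earlier normalization that $\hat\psi$ may be chosen with $n>0$: realizing $K$ as a binding component with page framing below the framing induced by $\Delta$ forces the difference $n$ to be positive.
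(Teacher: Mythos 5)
Your proposal is correct and follows essentially the same route as the paper's proof: take many negative stabilizations of a Legendrian approximation to drive the contact framing below $\lambda$ (this preserves the positive transverse pushoff), then invoke the Baker--Etnyre--Van Horn-Morris construction (Lemma \ref{legsurglemma}(2)) to realize $K$ as a binding component whose page framing equals the contact framing of the page-parallel Legendrian $\K^-$. The identification of the page framing of the binding with $\tb(\K^-)$, which you correctly flag as the step needing care, is exactly the point the paper also relies on.
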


\begin{proof} Recall that the construction of \cite{BEVHM} arranging for a transverse knot $K$ to be a binding component of an open book begins with taking a Legendrian approximation $\K$ of $K$, and finding an open book such that this approximation lies on a page in such a way that the framing induced by the page equals the contact framing of $\K$. Then, by a suitable stabilization, one obtains an open book with a binding component $B$ that is a transverse pushoff of $\K$ (and hence transversely isotopic to $K$) and furthermore a curve $\K^-$ on the page parallel to that binding component is a Legendrian approximation of $B$ (and a negative stabilization of $\K$).  Altering $\K$ by a negative stabilization does not change the transverse isotopy class of its transverse pushoff. Thus, by taking many negative stabilizations of $\K$ at the beginning of the construction, we can suppose that the contact framing of $\K^-$ is less than $\lambda$, in fact as much less as we desire. The contact framing of $\K^-$ is the same as the framing induced by the page, which then corresponds to the framing of $B \simeq K$ induced by the page.
\end{proof}

\begin{proposition}\label{contsurgprop} Let $(Z,\omega)$ be a weak symplectic filling of $(Y, \xi)$, and $\Delta\subset Z$ a properly embedded symplectic disk with (positively) transverse boundary $K\subset Y$. Let $Z' = Z - nbd(\Delta)$ be the symplectic manifold weakly filling $(Y', \xi')$ as in Proposition \ref{weakfillprop}. Then the contact structure $\xi'$ is obtained from $\xi$ by an inadmissible transverse surgery along $K$. Alternatively, $\xi'$ can be described as the result of a positive contact surgery along a Legendrian approximation of $K$.
\end{proposition}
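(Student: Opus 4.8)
The plan is to read the answer off the explicit monodromy computed in Proposition~\ref{monodromyprop} and to recognize the resulting open book as the one produced by inadmissible transverse surgery. By that proposition, $(Y',\xi')$ is supported by $(\Sigma',\psi')$ with $\Sigma' = (\Sigma - D)\cup A$ and $\psi' = \hat\psi\cdot t_c^{\,n}$, where $c$ is the core of the glued annulus $A$ (a curve parallel to the new binding component $B'$, and to $\partial D$) and $n>0$ is the integer recording the difference between the framing of $K$ induced by $\Delta$ and the page framing associated to the chosen lift $\hat\psi$. The first preliminary step is to invoke the preceding lemma to fix an open book supporting $\xi$ in which $K$ is transversely isotopic to a binding component and the page framing is smaller than the $\Delta$-framing; this guarantees $n>0$ and, as will be seen, that the surgery slope is positive (i.e.\ the surgery is inadmissible).

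Next I would isolate the untwisted open book $(\Sigma',\hat\psi)$, obtained from $(\Sigma,\psi)$ by replacing the disk $D$ (on which $\hat\psi$ is the identity) by an annulus carrying trivial monodromy. Capping off the binding component $B'$ returns $(\Sigma,\psi)$, so $(\Sigma',\hat\psi)$ supports the original $(Y,\xi)$ with one extra binding component $B'$; by Lemma~\ref{1braidlem} this $B'$ is transversely isotopic to the $1$-braid $K$, hence to $K$ itself. The monodromy $\psi' = \hat\psi\cdot t_c^{\,n}$ is then $\hat\psi$ post-composed with $n$ right-handed Dehn twists about the binding-parallel curve $c$, which is exactly the open-book description of regluing a solid-torus neighborhood of $B'$ with $n$ extra positive twists, i.e.\ an inadmissible transverse surgery along $B'\simeq K$ (cf.\ \cite{Conway:transverse}). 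The positivity $n>0$ makes the twists right-handed and the surgery inadmissible, with slope determined by $n$ and the framing data recorded above.

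To phrase this in the language of Lemma~\ref{legsurglemma} --- and thereby obtain the equivalent description as a positive contact surgery on a Legendrian approximation $\K$ of $K$ via parts (3)--(5) of that lemma --- I would match $(\Sigma',\hat\psi\cdot t_c^{\,n})$ with the standard surgery open book $(\Sigma'',\psi''_m)$, whose monodromy is $\psi\cdot t_{K_1}^{-1}t_{\partial_1}t_{\partial_2}^{\,m-1}$. These two pages differ combinatorially --- an annulus with the single binding $B'$ versus a pair of pants produced by the $1$-handle $H$ --- so the identification proceeds through a negative (Hopf) (de)stabilization of open books, which preserves the supported contact structure and accounts for the index shift between the carving's $t_c^{\,n}$ and the construction's $t_{\partial_2}^{\,m-1}$ (compare the analogous shift in Lemma~\ref{legsurglemma}(4) and \cite[Lemma 2.6]{LS:surgery}). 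Tracking the twists $t_{K_1}^{-1}$ and $t_{\partial_1}$ across this (de)stabilization, together with the framing computation, pins down $m$ and confirms that the surgery coefficient is positive.

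The main obstacle is precisely this last comparison: the careful bookkeeping of the binding-parallel curves and of the framings needed to match the carved open book to the standard inadmissible-transverse-surgery open book, and to certify that the resulting slope is positive rather than negative. Everything else --- that $(\Sigma',\hat\psi)$ recovers $(Y,\xi)$, that $B'$ is isotopic to $K$, and that $Z'$ weakly fills $\xi'$ --- is already in hand from Lemma~\ref{1braidlem}, Proposition~\ref{monodromyprop}, and Proposition~\ref{weakfillprop}.
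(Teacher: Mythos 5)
Your overall outline --- arrange $K$ as a binding component with small page framing, realize it as a $1$-braid via Lemma \ref{1braidlem}, read off the monodromy from Proposition \ref{monodromyprop}, and match the result against the surgery open book of Lemma \ref{legsurglemma} --- is the paper's outline, but two steps in the middle go wrong. First, the claim that $(\Sigma',\hat\psi)$ ``supports the original $(Y,\xi)$ with one extra binding component $B'$'' because capping off $B'$ returns $(\Sigma,\psi)$ is false: capping off a binding component is not inverse to puncturing the page, since it changes the underlying $3$-manifold by a page-framed Dehn filling. The open book $(\Sigma - D,\hat\psi)$ supports the result of surgery on the $1$-braid $K$ with the $\hat\psi$-framing, not $Y$ itself (puncturing the disk open book for $S^3$ at an interior point of the page produces the annulus open book for $S^1\times S^2$). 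Consequently the picture of ``regluing a solid-torus neighborhood of $B'$ with $n$ extra positive twists'' is not literally a transverse surgery on $K\subset(Y,\xi)$, and the slope bookkeeping built on that picture is not reliable.

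Second, the step you identify as the main obstacle --- matching the carved open book to $(\Sigma'',\psi''_m)$ --- requires no Hopf (de)stabilization; introducing one misreads the combinatorics. After the reduction of Lemma \ref{1braidlem}, the point $p$ lies in the collar $A$ of a binding component isotopic to $K$, so the carved page already contains a pair of pants $A-D$ with boundary components the old binding, $\partial D$, and $K_1=S^1\times\{1\}$; this is diffeomorphic, rel the rest of the page, to the pair of pants $A\cup H$ of the surgery open book. Under that identification the twist data match on the nose: the $t_{\gamma_1}$ of Lemma \ref{1braidlem} becomes $t_{\partial_1}$, the $t_{\gamma_2}^{-1}$ becomes $t_{K_1}^{-1}$, and the $n$ right twists about $\partial D$ from Proposition \ref{monodromyprop} become $t_{\partial_2}^{m-1}$ with $m=n+1$. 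That direct identification --- together with the observation that the page framing of the $1$-braid exceeds the page framing of the binding by $1$, which is why the paper arranges the binding's page framing to be less than $\lambda-1$ rather than merely less than $\lambda$ so that $n>0$ survives the shift --- is the entire content of the paper's argument; positivity of the contact surgery coefficient then comes from $n>0$ via Lemma \ref{legsurglemma}(3)--(5), with no destabilization to track.
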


\begin{proof} Let $\lambda$ denote the framing on $K$ induced by $\Delta$, and apply the transverse isotopy of previous lemma to arrange that $K$ is a binding component of an open book for $(Y, \xi)$, such that the page framing is less than $\lambda - 1$. Note that by attaching the trace of the isotopy to $\Delta$ in a collar attached to $\partial Z$, we can still suppose that $\partial \Delta = K$ (as in the proof of Corollary \ref{existencecor}(a) in Section \ref{constructionsec}; see \cite{etnyregolla}).

Now apply the isotopy of Lemma \ref{1braidlem} to realize $K$ as a braid adjacent to the boundary of the open book as in Figure \ref{bindbraidfig}, with the associated lift of the monodromy to a class in $\Mod({\Sigma},D)$. (Again we carry $\Delta$ through the isotopy.) We must see how the framing induced by this lift relates to the page framing of the binding component. This is easy to analyze in the model constructed in the proof of Lemma \ref{1braidlem} by considering a pushoff given by a point $p' = (0, T')\in A$. The curve $s\mapsto (\theta = s, \tau = T', \phi = s)$ links the original braid once (positively) in the model, while the page framing on the binding is zero (corresponding to the linking between $\{\tau = 0\}$ and a pushoff $(\theta = s, \tau = \epsilon, \phi = 0)$). Hence the ``page framing'' of the braid differs by 1 from the page framing on the binding, and by our earlier choice is still lower than $\lambda$. In particular the integer $n$, representing the difference between $\lambda$ and the framing on the braid given by the lifted monodromy, is positive.

Now the proposition follows easily thanks to the monodromy description in the previous proposition.  Indeed, with the braid described as in Figure \ref{bindbraidfig}, the proposition above says that $(Y',\xi')$ is described in terms of the open book by removing a small neighborhood of $p$ in $A$, and adding $n>0$ right Dehn twists about a curve parallel to the new boundary component. This open book is precisely equivalent to the one described before Lemma \ref{legsurglemma} (with $n$ in that description replaced by $n+1$), and hence describes an (integral) inadmissible transverse surgery along the binding, or equivalently the result of positive contact surgery on the Legendrian approximation of $K$ given by a curve on $A$ parallel to the boundary.
\end{proof}

Proposition \ref{contsurgprop} completes the proof of Theorem \ref{scoopthm}, which together with Theorem \ref{capthm} implies Theorem \ref{mainthm}. 

\subsection{Strong vs weak fillability} It is natural to ask whether the filling $(Z',\omega')$ can be taken to be a strong filling, assuming $(Z,\omega)$ is strong. This is essentially a homological condition: indeed, a result of Eliashberg~\cite[Proposition~$4.1$]{eliashberg:afewremarks} shows that a weak filling can be deformed to a strong one if and only if the symplectic form is exact in a neighborhood of the boundary. With this in mind, recall that for a framed knot $K\subset Y$ and $Y'$ the result of surgery along $K$ with the given framing, the Betti numbers $b_1(Y)$ and $b_1(Y')$ differ by at most one. More specifically, a meridian $m$ of $K$, considered in $Y'$, represents an element of $H_1(Y';\arr)$ that is trivial unless both $K$ is nullhomologous and the framing is the nullhomologous framing. 
 
\begin{theorem}\label{strongfillthm} In the situation of Theorem \ref{mainthm}, suppose that $(Z, \omega)$ is a strong symplectic filling of $(Y,\xi)$, and that either of the following two conditions hold:
\begin{enumerate}
\item The symplectic form $\omega$ is exact on a neighborhood of $Y\cup \Delta$, or
\item The meridian $m$ of $K$, considered as a real homology class in $Y'$, is trivial.
\end{enumerate}
Then $(Z',\omega')$ can be deformed to a strong symplectic filling of $(Y', \xi')$.
\end{theorem}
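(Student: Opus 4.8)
The plan is to reduce everything to Eliashberg's criterion (\cite[Proposition~4.1]{eliashberg:afewremarks}, quoted in the text): by Proposition~\ref{weakfillprop} the pair $(Z',\omega')$ is already a weak filling of $(Y',\xi')$, so it deforms to a strong filling exactly when $\omega'$ is exact on a neighborhood of $Y'=\partial Z'$. Since an inward collar of $Y'$ in $Z'$ deformation retracts onto $Y'$, this is equivalent to the vanishing of the restricted class $[\omega|_{Y'}]\in H^2(Y';\mathbb{R})$. Thus in both cases the entire task is to prove $[\omega|_{Y'}]=0$.

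Under hypothesis (1) this is immediate. The boundary decomposes as $Y'=(Y-nbd(K))\cup H$, where the carved solid torus $H$ lies in an arbitrarily small neighborhood of $\Delta$; hence $Y'$, together with a suitable inward collar in $Z'$, is contained in the neighborhood of $Y\cup\Delta$ on which $\omega$ is assumed exact. So $\omega'=\omega|_{Z'}$ is exact near $Y'$ and Eliashberg's criterion applies directly, with no homological input needed.

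Under hypothesis (2) I would argue by testing $[\omega|_{Y'}]$ against $H_2(Y';\mathbb{R})$ through the period pairing $[S]\mapsto\int_S\omega$. Because $(Z,\omega)$ is a strong filling we have $\omega|_Y=d\beta$ exact on $Y$, so $\omega$ is exact on the old piece $Y-nbd(K)$; and on the new solid torus $H$ one has $H^2(H;\mathbb{R})=0$, so $\omega|_H=d\beta_H$ is exact as well (concretely, the Liouville field $X$ built in Section~\ref{proofsec} supplies such a primitive $\beta_H=\iota_X\omega$ on the $H$-side). Now put a class $[S]\in H_2(Y';\mathbb{R})$ in normal form relative to $Y'=(Y-nbd(K))\cup_T H$, so that $S\cap H$ is a union of $d$ (algebraically counted) meridian disks of $H$ while $S_M=S\cap(Y-nbd(K))$ is a surface with $\partial S_M=d\gamma$ on the torus $T=\partial H$, where $\gamma$ is the surgery slope. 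Stokes on each piece then gives $\int_S\omega=d\int_\gamma(\beta-\beta_H)$, where $d$ is the algebraic intersection of $S$ with the core of $H$. Computing intersections on $T$, the meridian $m$ of $K$ satisfies $m\cdot S=d\,(m\cdot_T\gamma)$ with $m\cdot_T\gamma\neq 0$ (indeed $m\cdot_T\gamma=\pm1$ for an integral surgery, the slope never being meridional). Hence the hypothesis $[m]=0$ in $H_1(Y';\mathbb{R})$ forces $d=0$ for every $2$-cycle $S$; every such $S$ is then rationally homologous to one lying in $Y-nbd(K)\subset Y$, on which $\omega$ is exact, so $\int_S\omega=0$ and therefore $[\omega|_{Y'}]=0$.

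I expect the homological computation of the last paragraph to be the main obstacle: the content is to localize the obstruction $[\omega|_{Y'}]$ at the surgery region via Poincar\'e duality and see that $[m]=0$ precisely excludes the only $2$-cycles that could carry a nonzero period of $\omega$. It is worth recording the complementary nature of the two hypotheses, in light of the remark preceding the theorem. By that remark $[m]\neq 0$ occurs only when $K$ is nullhomologous and the surgery is along the nullhomologous framing; in that case a capped-off Seifert surface $\hat\Sigma$ (whose capping disk is built from $\Delta$) is a genuine new $2$-cycle on which $\int\omega$ can be nonzero, and this is exactly the configuration hypothesis (1) controls, since forcing $\omega$ to be exact near $Y\cup\Delta$ makes $\int_{\hat\Sigma}\omega=0$ by Stokes. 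Thus each of the two conditions guarantees the vanishing of the single potentially obstructing period, and Eliashberg's criterion then yields the desired strong filling.
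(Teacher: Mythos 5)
Your proof is correct. For hypothesis (1) you argue exactly as the paper does: $Y'$ together with an inward collar sits inside the prescribed neighborhood of $Y\cup\Delta$, so exactness there feeds directly into Eliashberg's criterion. For hypothesis (2), however, your route is genuinely different from the paper's. The paper works in the 4-manifold: it takes the Mayer--Vietoris sequence of the neighborhood $N\cup U_\Delta$ of $Y\cup\Delta$, observes that exactness of $\omega$ on $N$ and $H^2(U_\Delta)=0$ force $[\omega]$ to come from $H^1(V)\cong\R$ for $V$ a thickened neighborhood of $K$, and then kills the image in $H^2(Y')$ by tracing, via Poincar\'e duality, the generator of $H_3(V,\partial V)$ to the class $[m]\in H_1(Y')$. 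You instead work entirely inside the closed 3-manifold $Y'$, pairing $[\omega|_{Y'}]$ against 2-cycles: decomposing $Y'=(Y-nbd(K))\cup_T H$, using exactness of $\omega$ on each piece (strong filling on one side, $H^2(H;\R)=0$ on the other), and showing by Stokes that every period is a multiple of the algebraic intersection $d$ of the cycle with the core of $H$, which $[m]=0$ forces to vanish since $m$ is homologous in $H$ to $\pm(m\cdot_T\gamma)$ times that core with $m\cdot_T\gamma=\pm1$ for integral surgery. The two arguments are Poincar\'e dual to one another and isolate the same geometric obstruction --- the single potential period carried by cycles meeting the new core circle --- but yours has the advantage of being a concrete period computation requiring no bookkeeping in the 4-manifold, while the paper's localizes the cohomology class once and for all without choosing cycle representatives or a normal form. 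The only step you leave slightly informal is the normal-form claim for $S\cap H$; for the Stokes computation all you actually need is that $[S\cap T]$ lies in $\ker\bigl(H_1(T)\to H_1(H)\bigr)=\Z\langle\gamma\rangle$, which is automatic, so this is not a gap. Your closing observation on the complementarity of the two hypotheses matches the remark the paper makes just before the theorem.
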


\begin{proof}As observed above, to see that $(Z',\omega')$ can be deformed to a strong filling it suffices to show that $\omega' = \omega|_{Z'}$ is exact near $Y'$. Since $Y'$ is one boundary component of a tubular neighborhood of $Y\cup\Delta$, the first condition in the statement of the theorem clearly suffices.

For the second condition, let $N$ and $U_\Delta$ be small neighborhoods of $Y = \partial Z$ and $\Delta\subset Z$, respectively, and consider the Mayer-Vietoris sequence in de Rham cohomology
\[
H^1(V)\to H^2(N\cup U_\Delta)\to H^2(N)\oplus H^2(U_\Delta),
\]
where $V = N\cap U_\Delta$ is a neighborhood of $K\subset Y$, thickened into $Z$. Since $\omega$ is exact on $N$ and $H^2(U_\Delta) = 0$, the class $[\omega]\in H^2(N\cup U_\Delta)$ lies in the image of $H^1(V)\cong \arr$. Under the hypothesis that $[m] = 0$ in $H_1(Y')$, the composition
\[
H^1(V)\to H^2(N\cup U_\Delta)\to H^2(Y')
\]
(where the first map is the Mayer-Vietoris boundary and the second is inclusion) is the trivial map, which verifies the theorem. 

One way to check that the above composition vanishes is to pass to homology via Poincar\'e duality, and consider the composition
\[
H_3(V, \partial V) \to H_2(N\cup U_\Delta, \partial) \to H_1(Y').
\]
Since $V\cong S^1\times D^3$, the first group is generated by the relative class $[D^3, \partial D^3]$, which maps to the class $[\partial D^3]$ in the second group. Geometrically, thinking of $N$ as $Y\times I$ and $V$ as $nbd(K)\times I$, we have $D^3 = D^2\times I$ for $D^2$ a meridian disk of $K$. Thus $\partial D^3$ corresponds to $D^2 \times \{0,1\} \cup m\times I$ where $m = \partial D^2$ is the meridian. The second map above is given by intersecting chains with $Y'$, which in the case of $\partial D^3$ gives only the circle $m\times \{1\}$. Hence $[\partial D^3]\mapsto [m]$ and the claim follows.
\end{proof}

\begin{corollary}\label{strongfillcor} If $\K\subset S^3$ is a Legendrian knot in the standard contact structure on $S^3$, and $n>0$ is a given integer, then $\xi_n^-(\K)$ is weakly symplectically fillable if and only if it is strongly symplectically fillable. 
\end{corollary}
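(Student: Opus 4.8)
The converse is immediate---every strong filling is in particular a weak filling---so the plan is to establish that weak fillability of $\xi_n^-(\K)$ forces strong fillability. The key tool is the same criterion of Eliashberg \cite[Proposition 4.1]{eliashberg:afewremarks} used in Theorem \ref{strongfillthm}: a weak filling $(W,\Omega)$ of a closed contact $3$-manifold $(Y',\xi')$ deforms to a strong filling as soon as $\Omega$ is exact near $Y'$, for which it suffices that $[\Omega|_{Y'}]=0$ in $H^2(Y';\arr)$. First I would record that if $\xi_n^-(\K)$ is weakly fillable then its smooth surgery coefficient $r=n+\tb(\K)$ satisfies $r\geq 0$, by Proposition \ref{blowupfillprop} together with Lemma \ref{fillsurgcoefflem} (a symplectic disk in a blowup of $B^4$ has nonpositive self-intersection), and that $Y'=S^3_r(K)$.

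The argument then splits according to whether $r>0$ or $r=0$. When $r>0$ the group $H_1(S^3_r(K);\zee)\cong \zee/r$ is finite, so $H^2(Y';\arr)\cong H_1(Y';\arr)=0$ by Poincar\'e duality; the cohomological obstruction therefore vanishes automatically, and \emph{any} weak filling of $\xi_n^-(\K)$---one of which exists by hypothesis---deforms to a strong filling by Eliashberg's criterion. I would note that in this range I do not need the specific filling produced by Theorem \ref{mainthm}, nor can I expect to invoke condition (1) of Theorem \ref{strongfillthm}, since for $r>0$ the ambient blowup of $B^4$ carries exceptional spheres of positive symplectic area and its form is not exact.

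The hard part is the case $r=0$, where $Y'=S^3_0(K)$ is a homology $S^1\times S^2$ and $H^2(Y';\arr)\cong\arr$, so exactness near $Y'$ is no longer automatic; this is the main obstacle. To handle it I would use the structure of the filling rather than the topology of $Y'$. By Proposition \ref{blowupfillprop} the given weak filling is the complement of a neighborhood of a properly embedded symplectic disk $\Delta$ in a blowup $(Z,\omega)$ of $B^4$, and by Lemma \ref{fillsurgcoefflem} we have $-\Delta\cdot\Delta=r=0$; writing $[\Delta]=\sum_j n_j e_j$ as in \eqref{diskclasseqn}, equation \eqref{sumsquareseq} forces every $n_j=0$. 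Hence $\Delta$ is disjoint from all of the exceptional spheres and blows down to an embedded symplectic disk in $B^4$ equipped with its standard, exact symplectic structure. Thus $\omega$ is exact on a neighborhood of $S^3\cup\Delta$, so condition (1) of Theorem \ref{strongfillthm} applies and yields a strong filling of $(Y',\xi')=\xi_n^-(\K)$. In summary, the only delicate point is the $r=0$ situation: there the period of $\omega$ around the meridian of $K$ need not vanish for topological reasons, and one must instead exploit the exactness of the standard ball via Theorem \ref{strongfillthm}(1).
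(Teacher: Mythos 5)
Your proof is correct and follows essentially the same route as the paper: split on whether the smooth coefficient $r$ is nonzero (where the cohomological obstruction vanishes for free) or zero (where one identifies the filling with the complement of an embedded symplectic disk in the standard exact $B^4$ and invokes condition (1) of Theorem \ref{strongfillthm}). The only cosmetic difference is that for $r>0$ you apply Eliashberg's criterion directly via $H^2(Y';\arr)=0$, whereas the paper routes the same observation through condition (2) of Theorem \ref{strongfillthm} (triviality of the meridian class).
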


\begin{proof} If $n$ corresponds to a surgery whose smooth surgery coefficient is not zero, then the meridian of $K$ vanishes in $H_1(Y';\arr)$ (indeed, the latter group is trivial), so the result follows from case (2) of the theorem above. The case of a smooth zero-surgery arises exactly in the case that the transverse pushoff $K$ bounds a symplectic disk in a (weak) filling of $S^3$ having self-intersection zero. As seen in Section \ref{equivcondsec}, this is equivalent to $K$ bounding an embedded symplectic disk in $B^4$, whose symplectic form is exact. The conclusion follows from case (1) of Theorem \ref{strongfillthm}.
\end{proof}

\begin{table}[tb]
\begin{center}
\resizebox{\columnwidth}{!}{%
\begin{tabular}{||c|c|c|c||c|c|c|c||}
\hline
\rule{0pt}{4ex} 
\rule[-2.3ex]{0pt}{0pt}
Name  & \parbox{0.55in}{Fillable\\ surgery?} & $\mu(K)$ & Notes & Name & \parbox{0.55in}{Fillable\\ surgery?} & $\mu(K)$ & Notes\\
\hline
$3_1$ & Y & 4& Torus knot $T(3,2)$ & $10_{49}$ & Y & $\leq 12$& \\
\hline
$5_1$ & Y & 8 & & $10_{53}$ & N & $\infty$ & $c_* > g_*$ \\
\hline
$5_2$ & Y & 4 & Twist knot $K_{3}$ & $10_{55}$ & Y & $\leq 8$& \\
\hline
$7_1$ & Y & 12& & $10_{63}$ & Y & $\leq 8$&\\
\hline
$7_2$ & Y & 4 & Twist knot $K_{5}$ & $10_{66}$ & Y & $\leq 12$& \\
\hline
$7_3$ & Y & $\leq 8$ & & $10_{80}$ & Y & $\leq 12$&\\
\hline 
$7_4$ & N & $\infty$ & $c_*> g_*$ & $10_{101}$ & N & $\infty$ & $c_* > g_*$\\
\hline
$7_5$ & Y & $\leq 8$ &  & $10_{120}$ & N & $\infty$ & $c_* > g_*$\\
\hline
$8_{15}$ & Y & $\leq 8$ & & $10_{124}$ & Y& 13 & Torus knot $T(5,3)$\\
\hline
$8_{19}$ & Y & 9 & Torus knot $T(4,3)$ & $10_{126}$ & Y & 4 &\\
\hline
$8_{20}$ & Y & 0 &  & $10_{127}$ &Y & $\leq 8$ &\\
\hline
$8_{21}$ & Y & 4 & & $10_{128}$ & Y & 9 & Braid $3(12)^3(\bar{3}23)$\\
\hline
$9_1$ &Y & 16& Torus knot $T(9,2)$ & $10_{131}$ & Y & 4 & Braid $(12\bar{3}43\bar{2}\bar{1})(23\bar{2})2(12\bar{1})(\bar{4}123^2\bar{2}\bar{1}4)$ \\
\hline
$9_2$ &Y & 4 & Twist knot $K_7$ & $10_{133}$ & Y & 4& \\
\hline
$9_3$ & Y & $\leq 12$ & & $10_{134}$ & Y & $\leq 12$&\\
\hline
$9_4$ & Y & $\leq 8$ & & $10_{139}$ &Y & 13 & Braid $(12)^3(\bar{2}1^22)21$\\
\hline
$9_5$ & N & $\infty$ & $c_* > g_*$ & $10_{140}$ & Y& 0 & \\
\hline
$9_6$ & Y & $\leq 12$ & & $10_{142}$ & Y & $\leq 12$ & Braid $1^2 1^2(321\bar{2}\bar{3})1^2 2 3$\\
\hline
$9_7$ & Y & $\leq 8$ & & $10_{143}$& Y & 4& \\
\hline
$9_9$ & Y & $\leq 12$ & & $10_{145}$ & Y & $\leq 8$& Braid $32^2 1(32^2\bar{3})(21\bar{2})$\\
\hline
$9_{10}$ & N  & $\infty$ & $c_* > g_*$ & $10_{148}$ & Y & 4 & Braid $(\bar{1}2^21)(\bar{2}12)(221\bar{2}\bar{2})$\\
\hline
$9_{13}$ & N  & $\infty$ & $c_* > g_*$ & $10_{149}$ &Y & $\leq 8$ & \\
\hline
$9_{16}$ & Y & $\leq 12$ & & $10_{152}$ &Y & 13 & Braid $(12)^3 1(\bar{2}1^2 2)(21\bar{2})$\\
\hline
$9_{18}$ & Y & $\leq 8$ & & $10_{154}$ & Y & $\leq 12$ & Braid $1^2 2^2 1 (\bar{2}3 2)2^2 3$\\
\hline
$9_{23}$ & Y & $\leq 8$ & & $10_{155}$ & Y& 0 &  \\
\hline
$9_{35}$& N  & $\infty$ & $c_* > g_*$ & $10_{157}$&Y &$\leq 8$& \\
\hline
$9_{38}$& N  & $\infty$ & $c_* > g_*$ & $10_{159}$ & Y & 4& \\
\hline
$9_{45}$& Y & $\leq 8$ & & $10_{161}$& Y & $\leq 12$&\\
\hline
$9_{46}$& Y & 0 &  & $10_{165}$ & N & $\infty$ & $c_* > g_*$\\
\hline
$9_{49}$& N  & $\infty$ & $c_* > g_*$ & &  & &\\

\hline
\end{tabular}
}
\caption{Knots admitting a fillable positive contact surgery. }
\label{table:lowcross}
\end{center}
\end{table}
\section{Knots with low crossing number}\label{tablesec}

In Table \ref{table:lowcross} we tabulate those knots in the KnotInfo database with up to 10 crossings that admit fillable positive surgeries. The table includes only knots that are listed as quasipositive in the database; we do not distinguish between a knot and its mirror image (only one of these can be quasipositive, except possibly in the case of a slice knot). 

A ``Y'' in the table indicates that some transverse representative has a Legendrian approximation with a fillable positive contact surgery; such a representative is provided by either the quasipositive braid expression appearing in KnotInfo, a simple modification thereof, or by the braid indicated in the table. Where the quasipositive braid expression exhibiting the existence of a fillable surgery, as in Theorem \ref{qpslicethm}, is not obvious from the expression given on KnotInfo, we provide a braid expression using notation $1,2,3,\ldots$ and $\bar{1},\bar{2}, \bar{3},\ldots$ for the braid generators $\sigma_1,\sigma_2,\sigma_3,\ldots$ and their inverses.

For those knots with a fillable positive surgery, the table includes either a value or an upper bound for the minimal fillable surgery coefficient $\mu(K)$. Most instances of the upper bound coincide with the general one from Proposition \ref{muboundprop}, though values $\mu(K) = 0$ and $\mu(K) = 4$ are sharp in the cases $g_*(K) = 0$ and $g_*(K) = 1$, respectively, as follows from that proposition and from Proposition \ref{fillprop}. Estimates of $\mu(K)$ lower than $4g_*(K)$ arise from observing higher-multiplicity singularities in symplectic disks, as evidenced by braid (monodromy) expressions: for example, the knot $10_{128}$ admits a representation as the closure of a braid containing the expression $(12)^3$, which is the monodromy associated to an ordinary triple point singularity. When values of $\mu(K)$ are indicated as sharp, the result follows from arguments such as those elsewhere in the paper (Section \ref{surgcoeffsec} in particular). 
All examples of knots in this table that do not admit a fillable positive surgery are obstructed from doing so by Corollary \ref{c*g*cor} in that their clasp number exceeds their slice genus. Values of $c_*(K)$ in these instances were in many (perhaps all) cases obtained by Owens and Strle \cite{OwSt2016}.

\bibliography{references}
\bibliographystyle{amsplain}

\end{document}